\documentclass[a4paper,11pt]{article}
\usepackage{amsmath,amsthm,amssymb,enumitem,xcolor,outlines}
\usepackage{graphicx}
\usepackage[hang]{footmisc}
\usepackage{float}
\setlength\footnotemargin{0mm}

\makeatletter
\newcommand{\neutralize}[1]{\expandafter\let\csname c@#1\endcsname\count@}
\makeatother

\usepackage[nosort,nocompress,noadjust]{cite}

\usepackage[bookmarks=false,hyperfootnotes=false,colorlinks,
    linkcolor={red!60!black},
    citecolor={blue!50!black},
    urlcolor={blue!80!black}]{hyperref}

\renewcommand{\eqref}[1]{\hyperref[#1]{(\ref{#1})}}

\pagestyle{plain}

\setlength{\evensidemargin}{0pt}
\setlength{\oddsidemargin}{0pt}
\setlength{\topmargin}{-20pt}
\setlength{\footskip}{40pt}
\setlength{\textheight}{690pt}
\setlength{\textwidth}{450pt}
\setlength{\headsep}{10pt}
\setlength{\parindent}{0pt}
\setlength{\parskip}{1ex plus 0.5ex minus 0.2ex}

\newlist{enumlist}{enumerate}{2}
\setlist[enumlist,1]{labelindent=0cm,label=\arabic*.,ref=\arabic*,labelwidth=2.5ex,labelsep=0.5ex,leftmargin=3ex,align=left,topsep=0.5ex,itemsep=1ex,parsep=1ex}
\setlist[enumlist,2]{labelindent=0cm,label=\theenumlisti.\arabic*.,ref=\arabic*,labelwidth=5ex,labelsep=0.5ex,leftmargin=5.5ex,align=left,topsep=0.5ex,itemsep=1ex,parsep=1ex}
\newlist{itemlist}{itemize}{1}
\setlist[itemlist]{labelindent=0cm,label=$\bullet$,labelwidth=2.5ex,labelsep=0.5ex,leftmargin=3ex,align=left,topsep=0.5ex,itemsep=1ex,parsep=1ex}

\numberwithin{equation}{section}

{\theoremstyle{definition}\newtheorem{definition}{Definition}[section]
\newtheorem*{definition*}{Definition}

\newtheorem{remark}[definition]{Remark}

\newtheorem*{example*}{Example}
\newtheorem*{examples*}{Examples}}

\newtheorem{proposition}[definition]{Proposition}
\newtheorem{lemma}[definition]{Lemma}
\newtheorem{theorem}[definition]{Theorem}

\newtheorem{letterthm}{Theorem}

{\theoremstyle{definition}}

\newcommand*\xbar[1]{%
  \hbox{%
    \vbox{%
      \hrule height 0.5pt 
      \kern0.5ex
      \hbox{%
        \kern-0.1em
        \ensuremath{#1}%
        \kern-0.1em
      }%
    }%
  }%
}

\newcommand{\C}{\mathbb{C}}

\newcommand{\Z}{\mathbb{Z}}
\newcommand{\vphi}{\varphi}

\newcommand{\id}{\mathord{\text{\rm id}}}

\newcommand{\ovt}{\mathbin{\overline{\otimes}}}

\newcommand{\R}{\mathbb{R}}

\newcommand{\actson}{\curvearrowright}

\newcommand{\Aut}{\operatorname{Aut}}

\begin{document}

\begin{center}
{\boldmath\LARGE\bf Phase transitions for nonsingular Bernoulli actions}

\bigskip

{\sc by Tey Berendschot\footnote{\noindent KU~Leuven, Department of Mathematics, Leuven (Belgium).\\ E-mail: tey.berendschot@kuleuven.be\\
T.B.\ is supported by a PhD fellowship fundamental research of the Research Foundation Flanders.}}
\end{center}

\vspace{0.5ex}

\begin{abstract}\noindent
Inspired by the phase transition results for nonsingular Gaussian actions introduced in \cite{AIM19}, we prove several phase transition results for nonsingular Bernoulli actions. For generalized Bernoulli actions arising from groups acting on trees, we are able to give a very precise description of their ergodic theoretical properties in terms of the Poincaré exponent of the group. 
\end{abstract}

\vspace{0.5ex}

\section{Introduction}\label{sec:introduction}

When $G$ is a countable infinite group and $(X_0,\mu_0)$ is a nontrivial standard probability space, the probability measure preserving (pmp) action 
\begin{align*}
G\actson (X_0,\mu_0)^{G}:\;\;\;\; (g\cdot x)_h=x_{g^{-1}h}
\end{align*}
is called a \emph{Bernoulli action}. Probability measure preserving Bernoulli actions are among the most well studied objects in ergodic theory and they play an important role in operator algebras \cite{Pop03, Pop06, Ioa10}. When we consider a family of probability measures $(\mu_g)_{g\in G}$ on the base space $X_0$ that need not all be equal, the Bernoulli action 
\begin{align}\label{eq:nonsingular Bernoulli action}
G\actson (X,\mu)=\prod_{g\in G}(X_0,\mu_g)
\end{align} 
is in general not measure preserving anymore. Instead, we are interested in the case when $G\actson (X,\mu)$ is \emph{nonsingular}, i.e. the group $G$ preserves the \emph{measure class} of $\mu$. By Kakutani's criterion for equivalence of infinite product measures the Bernoulli action \eqref{eq:nonsingular Bernoulli action} is nonsingular if and only if $\mu_h\sim \mu_g$ for every $h,g\in G$ and 
\begin{align}\label{eq:Kakutani}
\sum_{h\in G}H^2(\mu_h,\mu_{gh})<+\infty, \text{ for every } g\in G.
\end{align}
Here $H^2(\mu_h,\mu_{gh})$ denotes the \emph{Hellinger distance} between $\mu_h$ and $\mu_{gh}$, see \eqref{eq:Hellinger distance}.

It is well known that a pmp Bernoulli action $G\actson (X_0,\mu_0)^{G}$ is mixing. In particular it is ergodic and conservative. However, for nonsingular Bernoulli actions, determining conservativeness and ergodicity is much more difficult, see for instance \cite{VW17,Dan18, Kos18, BKV19}. 

Besides nonsingular Bernoulli actions, another interesting class of nonsingular group actions comes from the \emph{Gaussian construction}, as introduced in \cite{AIM19}. If $\pi\colon G\rightarrow \mathcal{O}(\mathcal{H})$ is an orthogonal representation of a locally compact second countable (lcsc) group on a real Hilbert space $\mathcal{H}$, and if $c\colon G\rightarrow \mathcal{H}$ is a 1-cocycle for the representation $\pi$, then the assignment 
\begin{align}\label{eq:affine isometric action}
\alpha_g(\xi)=\pi_g(\xi)+c(g)
\end{align}
defines an \emph{affine isometric action} $\alpha\colon G\actson \mathcal{H}$. To any affine isometric action $\alpha\colon G\actson \mathcal{H}$ Arano, Isono and Marrakchi associated a nonsingular group action $\widehat{\alpha}\colon G\actson \widehat{\mathcal{H}}$, where $\widehat{\mathcal{H}}$ is the Gaussian probability space associated to $\mathcal{H}$. When $\alpha\colon G\actson \mathcal{H}$ is actually an orthogonal representation, this construction is well-established and the resulting Gaussian action is pmp. As explained below \cite[Theorem D]{BV20}, if $G$ is a countable infinite group and $\pi\colon G\rightarrow \ell^2(G)$ is the left regular representation, the affine isometric representation \eqref{eq:affine isometric action} gives rise to a nonsingular action that is conjugate with the Bernoulli action $G\actson \prod_{g\in G}(\R,\nu_{F(g)})$, where $F\colon G\rightarrow \R$ is such that $c_g(h)=F(g^{-1}h)-F(h)$, and $\nu_{F(g)}$ denotes the Gaussian probability measure with mean $F(g)$ and variance $1$.

By scaling the 1-cocycle $c\colon G\rightarrow \mathcal{H}$ with a parameter $t\in [0,+\infty)$ we get a one-parameter family of nonsingular actions $\widehat{\alpha}^{t}\colon G\actson \widehat{\mathcal{H}}^{t}$ associated to the affine isometric actions $\alpha^{t}\colon G\actson \mathcal{H}$, given by $\alpha^t_g(\xi)=\pi_g(\xi)+tc(g)$. Arano, Isono and Marrakchi showed that there exists a $t_{diss}\in [0,+\infty)$ such that $\widehat{\alpha}^t$ is dissipative up to compact stabilizers for every $t>t_{diss}$ and infinitely recurrent for every $t<t_{diss}$ (see Section \ref{sec:preliminaries} for terminology). 

Inspired by the results obtained in \cite{AIM19}, we study a similar phase transition framework, but in the setting of nonsingular Bernoulli actions. Such a phase transition framework for nonsingular Bernoulli actions was already considered by Kosloff and Soo in \cite{KS20}. They showed the following phase transition result for the family of nonsingular Bernoulli actions of $G=\Z$ with base space $X_0=\{0,1\}$ that was introduced in \cite[Corollary 6.3]{VW17}: for every $t\in [0,+\infty)$ consider the family of measures $(\mu_n^t)_{n\in \Z}$ given by
\begin{align*}
\mu_n^t(0)=\begin{cases}1/2 &\text{ if } n\leq 4t^2\\
1/2+t/\sqrt{n}&\text{ if  }n >4t^2\end{cases}.
\end{align*}
Then $\Z\actson (X,\mu_t)=\prod_{n\in \Z}(\{0,1\},\mu_n^t)$ is nonsingular for every $t\in [0,+\infty)$. Kosloff and Soo showed that there exists a $t_1\in (1/6,+\infty)$ such that $ \Z\actson (X,\mu_t)$ is conservative for every $t<t_1$ and dissipative for every $t>t_1$ \cite[Theorem 3]{KS20}. In \cite[Example D]{DKR20} the authors describe a family of \emph{nonsingular Poisson suspensions} for which a similar phase transition occurs. These examples arise from dissipative essentially free actions of $\Z$, and thus they are nonsingular Bernoulli actions. We generalize the phase transition result from \cite{KS20} to arbitrary nonsingular Bernoulli actions as follows.

Suppose that $G$ is a countable infinite group and let $(\mu_g)_{g\in G}$ be a family of equivalent probability measure on a standard Borel space $X_0$. Let $\nu$ also be a probability measure on $X_0$. For every $t\in [0,1]$ we consider the family of equivalent probability measures $(\mu_g^t)_{g\in G}$ that are defined by
\begin{align}\label{eq:convex combination}
\mu_g^t=(1-t)\nu+t\mu_g.
\end{align}
Our first main result is that in this setting there is a phase transition phenomenon.

\begin{letterthm}\label{thm:phase transition}
Let $G$ be a countable infinite group and assume that the Bernoulli action $G\actson (X,\mu_1)=\prod_{g\in G}(X_0,\mu_g)$ is nonsingular. Let $\nu\sim \mu_e$ be a probability measure on $X_0$ and for every $t\in[0,1]$ consider the family $(\mu_g^t)_{g\in G}$ of equivalent probability measures given by \eqref{eq:convex combination}. Then the Bernoulli
action 
\begin{align*}
G\actson (X,\mu_t)=\prod_{g\in G}(X_0,\mu_g^t)
\end{align*}
is nonsingular for every $t\in [0,1]$ and there exists a $t_1\in [0,1]$ such that $G\actson (X,\mu_t)$ is weakly mixing for every $t<t_1$ and dissipative for every $t>t_1$. 
\end{letterthm}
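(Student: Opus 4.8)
The plan is to reduce both the nonsingularity and the phase transition to the monotone behaviour, in the parameter $t$, of the Hellinger affinities, for which I write $\rho(\mu,\nu)=\int\sqrt{d\mu\,d\nu}$ so that $H^2(\mu,\nu)=1-\rho(\mu,\nu)$. First I would settle nonsingularity. Since $\nu\sim\mu_e$ and all the $\mu_g$ are equivalent, each $\mu_g$ has an a.e.\ positive density $p_g=d\mu_g/d\nu$, so $\mu_g^t$ has density $(1-t)+tp_g$ and the pair of densities $(d\mu_h^t,d\mu_{gh}^t)$ is the convex combination $(1-t)(d\nu,d\nu)+t(d\mu_h,d\mu_{gh})$. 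As $(x,y)\mapsto\sqrt{xy}$ is concave on $[0,\infty)^2$, integrating the pointwise inequality against $\nu$ yields
\[
\rho(\mu_h^t,\mu_{gh}^t)\ \ge\ (1-t)\rho(\nu,\nu)+t\rho(\mu_h,\mu_{gh})\ =\ (1-t)+t\rho(\mu_h,\mu_{gh}),
\]
hence $H^2(\mu_h^t,\mu_{gh}^t)\le t\,H^2(\mu_h,\mu_{gh})$. Summing over $h$ and using that $G\actson(X,\mu_1)$ is nonsingular, Kakutani's criterion \eqref{eq:Kakutani} holds for every $t\in[0,1]$ (equivalence of the $\mu_g^t$ being clear), so $G\actson(X,\mu_t)$ is nonsingular.

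Next I would isolate the quantity driving the transition. For fixed $h,g$ the integrand $\sqrt{((1-t)+tp_h)((1-t)+tp_{gh})}$ is concave in $t$, being the composition of the affine map $t\mapsto((1-t)+tp_h,\,(1-t)+tp_{gh})$ with the concave function $(x,y)\mapsto\sqrt{xy}$; integrating preserves concavity, so $t\mapsto\rho(\mu_h^t,\mu_{gh}^t)$ is concave. As $\rho\le 1$ always (Cauchy--Schwarz) with $\rho(\mu_h^0,\mu_{gh}^0)=\rho(\nu,\nu)=1$, this concave function attains its maximum at the left endpoint and is therefore nonincreasing on $[0,1]$. Consequently each
\[
c_g(t):=\prod_{h\in G}\rho(\mu_h^t,\mu_{gh}^t)=\int_X\sqrt{\tfrac{d(g^{-1}_{*}\mu_t)}{d\mu_t}}\,d\mu_t
\]
is nonincreasing in $t$ (a product, convergent by the first step since $\sum_h(1-\rho)<\infty$, of nonincreasing factors in $(0,1]$), and hence so is $F(t):=\sum_{g\in G}c_g(t)$. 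Thus $\{t:F(t)<\infty\}$ is an up-set, and I set $t_1:=\inf\{t:F(t)<\infty\}\in[0,1]$; by monotonicity $F(t)<\infty$ for every $t>t_1$ and $F(t)=\infty$ for every $t<t_1$.

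It remains to match the two regimes of $F$ to the dynamics. When $F(t)<\infty$ one has $\int_X\sum_g\sqrt{d(g^{-1}_{*}\mu_t)/d\mu_t}\,d\mu_t<\infty$, so $\sum_g\sqrt{d(g^{-1}_{*}\mu_t)/d\mu_t}(x)<\infty$ and therefore $\sum_g d(g^{-1}_{*}\mu_t)/d\mu_t(x)<\infty$ for a.e.\ $x$; this is exactly dissipativity, which disposes of all $t>t_1$. For $t<t_1$, where $F(t)=\infty$, the soft first-moment argument no longer applies and I must produce conservativeness and, beyond it, weak mixing. Here I would invoke the Bernoulli-specific analysis of Section~\ref{sec:preliminaries}: that divergence of the Hellinger series $\sum_g\prod_h\rho(\mu_h^t,\mu_{gh}^t)$ forces conservativeness, and that a conservative nonsingular Bernoulli action of an infinite group is weakly mixing. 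Reading off the behaviour of $F$ across $t_1$ then gives weak mixing for $t<t_1$ and dissipativity for $t>t_1$.

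I expect this last regime to be the main obstacle. The implication $\sum_g c_g<\infty\Rightarrow$ dissipative is not reversible for general nonsingular actions, so recovering conservativeness from $F(t)=\infty$ must exploit the product structure of $\mu_t$ together with quantitative, second-moment control of the Radon--Nikodym cocycle; and upgrading conservativeness to weak mixing requires an ergodicity/zero--one input special to Bernoulli actions. The concavity-based monotonicity of $F$ is the soft engine of the argument, but these two Bernoulli-specific facts are where the real work lies.
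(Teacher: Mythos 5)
Your reduction of nonsingularity to $H^2(\mu_h^t,\mu_{gh}^t)\le t\,H^2(\mu_h,\mu_{gh})$ and your treatment of the dissipative regime via finiteness of the first moment $\int_X\sum_g\sqrt{dg\mu_t/d\mu_t}\,d\mu_t$ are both correct and consistent with the paper. The gap is in the regime $F(t)=+\infty$. The two facts you invoke from ``the Bernoulli-specific analysis of Section~\ref{sec:preliminaries}'' are not there, and the first of them is false: divergence of the series $\sum_g\prod_h\bigl(1-H^2(\mu_h^t,\mu_{gh}^t)\bigr)$ does not force conservativeness. Proposition~\ref{prop:critical value} of this paper exhibits exactly such a situation (for a generalized Bernoulli action): at the critical value $1-H^2(\mu_0,\mu_1)=(q-1)^{-1/2}$ the first moment diverges, yet the action is dissipative up to compact stabilizers. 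Consequently your threshold $t_1=\inf\{t:F(t)<\infty\}$ need not be the phase-transition point: there could be $t<t_1$ with $F(t)=+\infty$ at which the action is nonetheless dissipative, and then the asserted weak mixing for all $t<t_1$ fails. The second invoked fact --- that a conservative nonsingular Bernoulli action of an infinite group is automatically weakly mixing --- is likewise unavailable; ergodicity of conservative nonsingular Bernoulli actions is a hard problem known only under additional hypotheses (cf.\ \cite{BKV19,Dan18}).

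The paper avoids both problems by defining the threshold dynamically (essentially $t_1=\sup\{t:\,G\actson(X,\mu_t)\text{ is not dissipative}\}$) and proving two \emph{relative} statements that exploit the strict inequality $s<t$. First, since $\mu_g^s$ is the push-forward of $\mu_g^t\times\nu\times\lambda$ under a coordinatewise random-choice map, $G\actson(X,\mu_s)$ is a factor of $G\actson(X,\mu_t)\times(X,\mu_0)\times\{0,1\}^G$, i.e.\ of (conservative) $\times$ (mixing pmp); the Schmidt--Walters theorem \cite{SW81} shows that invariant functions of the product do not depend on the auxiliary coordinates, and resampling finitely many coordinates upgrades this to weak mixing of $(X,\mu_s)$. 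Second, a separate and more delicate argument (this is where $\nu\sim\mu_e$ is genuinely used) shows that if $(X,\mu_t)$ is merely not dissipative, then the image of its conservative part under the same factor map has full $\mu_s$-measure, so $(X,\mu_s)$ is conservative. Your concavity-based monotonicity of $F$ is a reasonable heuristic for why a transition should occur, but it cannot substitute for these two propagation arguments; you would need to supply them (or an equivalent) to close the proof.
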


Suppose that $G$ is a nonamenable countable infinite group. Recall that for any standard probability space $(X_0,\mu_0)$, the pmp Bernoulli action $G\actson (X_0,\mu_0)^{G}$ is strongly ergodic. Consider again the family of probability measures $(\mu_g^t)_{g\in G}$ given by \eqref{eq:convex combination}. In Theorem \ref{thm:strong ergodicity} below we prove that for $t$ close enough to $0$, the resulting nonsingular Bernoulli action is strongly ergodic. This is inspired by \cite[Theorem 7.20]{AIM19} and \cite[Theorem 5.1]{MV20}, which state similar results for nonsingular Gaussian actions. 

\begin{letterthm}\label{thm:strong ergodicity}
Let $G$ be a countable infinite nonamenable group and suppose that the Bernoulli action  $G\actson (X,\mu_1)=\prod_{g\in G}(X_0,\mu_g)$ is nonsingular. Let $\nu\sim \mu_e$ be a probability measure on $X_0$ and for every $t\in [0,1]$ consider the family $(\mu_g^t)_{g\in G}$ of equivalent probability measures given by \eqref{eq:convex combination}. Then there exists a $t_0\in (0,1]$ such that $G\actson (X,\mu_t)=\prod_{g\in G}(X_0,\mu_g^t)$ is strongly ergodic for every $t<t_0$. 
\end{letterthm}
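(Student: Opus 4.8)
The plan is to prove strong ergodicity by establishing a spectral gap for the Koopman representation $\kappa^t\colon G\to\mathcal{U}(L^2(X,\mu_t))$ relative to the vacuum vector $\Om=\mathbf 1$. As in the Gaussian analogues \cite[Theorem 7.20]{AIM19} and \cite[Theorem 5.1]{MV20}, strong ergodicity follows once I produce a finite set $F_0\subset G$ and a constant $\kappa_0>0$ with
\begin{align*}
\sup_{g\in F_0}\|\kappa^t_g\xi-\xi\|\;\geq\;\kappa_0\,\bigl\|\xi-\langle\xi,\Om\rangle\Om\bigr\|\qquad\text{for all }\xi\in L^2(X,\mu_t),
\end{align*}
i.e.\ no almost invariant sequence of unit vectors is asymptotically orthogonal to $\Om$. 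I work in the infinite tensor product picture $L^2(X,\mu_t)=\bigotimes_{h\in G}\bigl(L^2(X_0,\mu_h^t),\Om_h\bigr)$ with $\Om_h=\mathbf 1$, graded by the finite set of ``active'' coordinates: $L^2(X,\mu_t)=\bigoplus_{F\subset G\text{ finite}}\mathcal{H}_F$, where $\mathcal{H}_F=\bigotimes_{h\in F}\bigl(L^2(X_0,\mu_h^t)\ominus\C\Om_h\bigr)$ and $\mathcal{H}_\emptyset=\C\Om$.

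The representation-theoretic backbone comes from nonamenability. The operator $\kappa^t_g$ permutes tensor factors by left translation on the index set $G$ and multiplies by the product density $\prod_h\zeta_{g,h}$, where $\zeta_{g,h}=(d\mu^t_{g^{-1}h}/d\mu^t_h)^{1/2}\in L^2(X_0,\mu^t_h)$. Splitting $\zeta_{g,h}=\langle\zeta_{g,h},\Om\rangle\Om_h+\zeta_{g,h}^0$ decomposes $\kappa^t_g$ into a grading-preserving part $\rho^t_g$ and a grading-changing remainder. On the degree-one space $\bigoplus_h\bigl(L^2(X_0,\mu^t_h)\ominus\C\Om_h\bigr)$ the part $\rho^t$ is a multiple of the left regular representation $\lambda$ of $G$, and on degree $n$ it is weakly contained in $\lambda$. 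Since $G$ is nonamenable, $\lambda$ has no almost invariant vectors, so $\rho^t$ has a spectral gap on the positive-degree subspace $\bigoplus_{F\neq\emptyset}\mathcal{H}_F$, with $\kappa_0$ and $F_0$ independent of $t$.

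The analytic input is a Hellinger estimate showing that the convex combinations \eqref{eq:convex combination} become increasingly pmp as $t\to 0$. Writing $p_a=d\mu_a/d\nu$ and $f_a^t=(1-t)+tp_a=d\mu_a^t/d\nu$, the bound $\sqrt{f^t_h}+\sqrt{f^t_{gh}}\geq\sqrt t\,(\sqrt{p_h}+\sqrt{p_{gh}})$ yields the pointwise inequality $(\sqrt{f^t_h}-\sqrt{f^t_{gh}})^2\leq t\,(\sqrt{p_h}-\sqrt{p_{gh}})^2$, whence
\begin{align*}
\sum_{h\in G}H^2(\mu_h^t,\mu_{gh}^t)\;\leq\; t\sum_{h\in G}H^2(\mu_h,\mu_{gh})\;\xrightarrow[\;t\to 0\;]{}\;0\qquad\text{for each fixed }g.
\end{align*}
This quantity simultaneously controls the motion of the vacuum, $\|\kappa^t_g\Om-\Om\|^2=2\bigl(1-\prod_h(1-H^2(\mu_{g^{-1}h}^t,\mu_h^t))\bigr)$, and the operator norm of the grading-changing remainder, each being $O\bigl((\sum_h H^2(\mu_h^t,\mu_{gh}^t))^{1/2}\bigr)=O(\sqrt t)$ uniformly over $F_0$. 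For $t$ small this makes $\sup_{g\in F_0}\|\kappa^t_g-\rho^t_g\|$ less than $\kappa_0/2$ on the positive-degree subspace, so the spectral gap of $\rho^t$ passes to $\kappa^t$, giving the displayed inequality and hence strong ergodicity for all $t<t_0$.

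The main obstacle is that, unlike in the pmp case, $\kappa^t$ neither fixes $\Om$ nor preserves the grading, so there is no invariant complement on which to read off a gap; the perturbative transfer must absorb the motion of $\Om$ and the grading-changing terms at once, and one must verify that the diagonal part $\rho^t$ is genuinely a tempered representation (the Bernoulli analogue of the functoriality of second quantization exploited in \cite{AIM19}) rather than a mere compression. Equally delicate is the reduction to the spectral-gap-relative-to-$\Om$ criterion: in the type~III setting an asymptotically invariant sequence of sets does \emph{not} translate into Koopman-almost-invariant vectors through the naive formula $\mathbf 1_{A_n}-\mu(A_n)\Om$, since the Radon--Nikodym weights are far from $1$, and it is precisely here that the nonsingular strong-ergodicity criterion (following Marrakchi, as used in \cite{AIM19,MV20}) must be invoked.
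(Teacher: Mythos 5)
Your analytic starting point is correct and matches the paper: the bound $\sum_{h}H^2(\mu_h^t,\mu_{gh}^t)\leq t\sum_h H^2(\mu_h,\mu_{gh})$ and the resulting fact that $\langle\rho^t_g(1),1\rangle\to 1$ as $t\to 0$ are exactly how the actual proof begins. But the core of your argument has two gaps, both of which you name yourself and neither of which you close. First, the ``grading-preserving part'' $\rho^t_g$ obtained by projecting each density $\zeta_{g,h}$ onto $\C\Om_h$ is not a group representation: it is not multiplicative in $g$ (the cross terms you discard are exactly what restores multiplicativity), and it is a strict contraction rather than a unitary. So the assertion that it is ``weakly contained in $\lambda$'' on the positive-degree subspace, with a spectral gap constant $\kappa_0$ uniform in $t$, has no proof; comparing it with the pmp Koopman representation of $\nu^G$ requires identifying $L^2(X_0,\mu_h^t)$ with $L^2(X_0,\nu)$, which distorts the norms in a $g$- and $h$-dependent way. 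Second, and more seriously, a spectral gap of $\kappa^t$ relative to $\Om$ does \emph{not} imply strong ergodicity of a nonsingular action: an almost invariant sequence of sets $A_n$ does not yield Koopman-almost-invariant vectors, precisely because of the Radon--Nikodym densities. You defer this to ``the nonsingular strong-ergodicity criterion following Marrakchi,'' but that criterion (\cite[Lemma 5.2]{MV20}) is not a black box taking a Koopman spectral gap as input; its hypotheses are (i) nonamenability of the \emph{action} $G\actson(X,\mu_t)$ and (ii) non-weak-containment of $\rho^t$ in $\rho^t\otimes\rho^0$ for the reduced Koopman representation $\rho^0$ of an auxiliary pmp Bernoulli action, and its conclusion must still be combined with a concrete coupling to reach a contradiction.

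For comparison, the paper establishes (i) via Nevo's theorem (\cite[Theorem 3.7]{Nev03}): if $G\actson(X,\mu_{t_n})$ were amenable along $t_n\to 0$, then $\bigoplus_n\rho^{t_n}$ would be weakly contained in $\lambda_G$ while having almost invariant vectors by the Hellinger estimate, contradicting nonamenability of $G$; and (ii) by the same almost-invariant-vector argument against the stable spectral gap of $\rho^0$. It then couples $G\actson(X,\mu_t)$ to the pmp action through the factor map $\Psi\colon X\times X\times Z_0^{G}\to X$ with $\Psi_*(\mu_1\times\mu_0\times\lambda^G)=\mu_t$, applies \cite[Lemma 5.2]{MV20} to conclude $\|(E\circ\Psi_*)(f_n)\|_2\to 1$ for an almost invariant sequence $f_n$, and derives the contradiction from the explicit estimate
\begin{align*}
\left\|(E\circ\Psi_*)\big|_{L^2(X,\mu_t)\ominus\C 1}\right\|\leq t^{1/2}<1,
\end{align*}
proved by a clean tensor-factor computation. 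This coupling and conditional-expectation contraction is the mechanism that replaces your perturbative transfer of a spectral gap, and it is the piece your proposal is missing.
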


Although we can prove a phase transition result in large generality, it remains very challenging to compute the critical value $t_1$. However, when $G\subset \Aut(T)$, for some locally finite tree $T$, following \cite[Section 10]{AIM19}, we can construct \emph{generalized Bernoulli actions} of which we can determine the conservativeness behaviour very precisely. To put this result into perspective, let us first explain briefly the construction from \cite[Section 10]{AIM19}.

For a locally finite tree $T$, let $\Omega(T)$ denote the set of orientations on $T$. Let $p\in (0,1)$ and fix a root $\rho\in T$. Define a probability measure $\mu_p$ on $\Omega(T)$ by orienting an edge towards $\rho$ with probability $p$ and away from $\rho $ with probability $1-p$. If $G\subset \Aut(T)$ is a subgroup, then we naturally obtain a nonsingular action $G\actson (\Omega(T),\mu_p)$. Up to equivalence of measures, the measure $\mu_p$ does not depend on the choice of root $\rho\in T$. The \emph{Poincaré exponent} of  $G\subset \Aut(T)$ is defined as 
\begin{align}\label{eq:Poincare exponent intro}
\delta(G\actson T)=\inf\{s>0 \text{ for which }\sum_{w\in G\cdot v}\exp(-s d(v,w))<+\infty\},
\end{align}
where $v\in V(T)$ is any vertex of $T$. In \cite[Theorem 10.4]{AIM19} Arano, Isono and Marrakchi showed that if $G\subset \Aut(T)$ is a closed nonelementary subgroup, the action $G\actson (\Omega(T),\mu_p)$ is dissipative up to compact stabilizers if $2\sqrt{p(1-p)}<\exp(-\delta)$ and weakly mixing if $2\sqrt{p(1-p)}>\exp(-\delta)$. This motivates the following similar construction. 

Let $E(T)\subset V(T)\times V(T)$ denote the set of \emph{oriented edges}, so that vertices $v$ and $w$ are adjacent if and only if $(v,w),(w,v)\in E(T)$. Suppose that $X_0$ is a standard Borel space and that $\mu_0,\mu_1$ are equivalent probability measures on $X_0$. Fix a root $\rho\in T$ and define a family of probability measures $(\mu_e)_{e\in E(T)}$ by 
\begin{align}\label{eq:tree measures intro}
\mu_e=\begin{cases}\mu_0 &\text{ if } e \text{ is oriented towards } \rho\\
\mu_1 &\text{ if } e \text{ is oriented away from } \rho\end{cases}.
\end{align}
Suppose that $G\subset \Aut(T)$ is a subgroup. Then the generalized Bernoulli action
\begin{align}\label{eq:tree actions}
G\actson \prod_{e\in E(T)}(X_0,\mu_e):\;\;\;\; (g\cdot x)_e=x_{g^{-1}\cdot e} 
\end{align}
is nonsingular and up to conjugacy it does not depend on the choice of root $\rho\in T$. In our next main result we generalize \cite[Theorem 10.4]{AIM19} to nonsingular actions of the from \eqref{eq:tree actions}.   
 
\begin{letterthm}\label{thm:trees}
Let $T$ be a locally finite tree with root $\rho\in T$ and let $G\subset \Aut(T)$ be a nonelementary closed subgroup with Poincaré exponent $\delta=\delta(G\actson T)$. Let $\mu_0$ and $\mu_1$ be equivalent probability measures on a standard Borel space $X_0$ and define a family of equivalent probability measures $(\mu_e)_{e\in E(T)}$ by \eqref{eq:tree measures intro}. Then the generalized Bernoulli action \eqref{eq:tree actions} is dissipative up to compact stabilizers if $1-H^2(\mu_0,\mu_1)<\exp(-\delta/2)$ and weakly mixing if $1-H^2(\mu_0,\mu_1)>\exp(-\delta/2)$.  
\end{letterthm}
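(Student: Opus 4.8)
The plan is to reduce the whole dichotomy to an exact computation of the Hellinger affinities $1-H^2(\mu,g_{*}\mu)$ between $\mu=\prod_{e\in E(T)}\mu_e$ and its translates, and then to match the resulting summability threshold with the Poincaré exponent. Throughout write $a=1-H^2(\mu_0,\mu_1)=\int_{X_0}\sqrt{d\mu_0\,d\mu_1}\in(0,1]$, so the two hypotheses read $a<\exp(-\delta/2)$ and $a>\exp(-\delta/2)$. Since the Hellinger affinity is multiplicative over independent coordinates, for every $g\in G$ one has
\begin{align*}
1-H^2(\mu,g_{*}\mu)=\prod_{e\in E(T)}\bigl(1-H^2(\mu_e,(g_{*}\mu)_e)\bigr),
\end{align*}
where $(g_{*}\mu)_e=\mu_{g^{-1}\cdot e}$. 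The argument then rests on identifying, for each $g$, the oriented edges on which $\mu_e$ and $\mu_{g^{-1}\cdot e}$ differ.

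First I would carry out this geometric bookkeeping. The coordinate $\mu_e$ equals $\mu_0$ exactly when $e$ is oriented towards $\rho$, whereas $\mu_{g^{-1}\cdot e}$ equals $\mu_0$ exactly when $e$ is oriented towards $g\cdot\rho$. These two prescriptions agree on every edge that does not separate $\rho$ from $g\cdot\rho$, and disagree precisely on the edges lying over the geodesic segment $[\rho,g\cdot\rho]$. Each of the $d(\rho,g\cdot\rho)$ geometric edges of this segment contributes its two orientations to $E(T)$, and on both of them $\mu_0$ and $\mu_1$ get swapped, each factor being the affinity $a$. Hence
\begin{align*}
1-H^2(\mu,g_{*}\mu)=a^{\,2\,d(\rho,\,g\cdot\rho)}.
\end{align*}
This is the exact analogue of the computation $1-H^2=(2\sqrt{p(1-p)})^{d(\rho,g\cdot\rho)}$ underlying \cite[Theorem 10.4]{AIM19}; the exponent is $2d$ rather than $d$ precisely because oriented edges double-count the geometric edges, and this is what produces $\delta/2$ in place of $\delta$.

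Next I would relate the integrability of $g\mapsto 1-H^2(\mu,g_{*}\mu)$ over $G$ to the Poincaré exponent. Writing $a=\exp(-c)$ with $c=-\log a\ge 0$, integrating against a left Haar measure on the totally disconnected lcsc group $G$ and using that the vertex stabilizer $G_\rho$ is compact open gives
\begin{align*}
\int_G\bigl(1-H^2(\mu,g_{*}\mu)\bigr)\,dg=\operatorname{vol}(G_\rho)\sum_{w\in G\cdot\rho}\exp(-2c\,d(\rho,w)),
\end{align*}
which by the definition of $\delta$ in \eqref{eq:Poincare exponent intro} is finite when $2c>\delta$ and infinite when $2c<\delta$; that is, finite exactly when $a<\exp(-\delta/2)$ and infinite exactly when $a>\exp(-\delta/2)$. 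This is the point at which the threshold $\exp(-\delta/2)$ appears.

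Finally I would feed this into the general dichotomy for nonsingular Bernoulli actions that drives Theorem~\ref{thm:phase transition}. In the regime $a<\exp(-\delta/2)$ the summability above, via the Hopf criterion, yields that the action is dissipative up to compact stabilizers. In the regime $a>\exp(-\delta/2)$ the divergence yields conservativeness, and the action must then be upgraded to weak mixing. This last step is the main obstacle: conservativeness alone does not give weak mixing, and one has to rule out nontrivial finite-dimensional pieces of the Koopman representation. Here I would use that $G\subset\Aut(T)$ is nonelementary and closed, so that the associated geometric edge/boundary representation is weakly mixing and the action on $\partial T$ is strongly proximal; combined with conservativeness this excludes any intermediate non-weakly-mixing behaviour, exactly as in \cite[Theorem 10.4]{AIM19} and in the weak mixing half of Theorem~\ref{thm:phase transition}. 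Controlling this representation-theoretic input in the nonsingular (non-pmp) setting is the delicate part of the argument.
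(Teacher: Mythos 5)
Your Hellinger bookkeeping and the dissipative half are essentially the paper's argument and are correct: the oriented edges on which $\mu_e$ and $\mu_{g^{-1}\cdot e}$ disagree are exactly those of $E([\rho,g\cdot\rho])$, giving $1-H^2(\mu,g_*\mu)=a^{2d(\rho,g\cdot\rho)}$ with $a=1-H^2(\mu_0,\mu_1)$, and when $a<\exp(-\delta/2)$ the finiteness of $\sum_{v\in G\cdot\rho}a^{2d(\rho,v)}$ gives $\sum_{v}\exp(S_v(x)/2)<+\infty$ a.e.\ (where $\exp(S_{g\cdot\rho})=dg\mu/d\mu$), hence $\sum_v\exp(S_v(x))<+\infty$ a.e.\ and dissipativity up to compact stabilizers.

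The other half has a genuine gap. You assert that divergence of $\int_G\bigl(1-H^2(\mu,g_*\mu)\bigr)\,dg$ ``yields conservativeness,'' but this first-moment computation cannot decide recurrence: $1-H^2(\mu,g_*\mu)$ is the expectation of $\sqrt{dg\mu/d\mu}$, and divergence of a sum of expectations does not give almost sure divergence of $\sum_{v\in G\cdot\rho}\exp(S_v(x))$. (Note that the first moment of $\exp(S_v)$ itself equals $1$ for every $v$, so that sum of expectations always diverges and proves nothing.) This is precisely where the paper has to work: it passes to a compactly generated nonelementary closed subgroup $G'$ with $1-H^2(\mu_0,\mu_1)>\exp(-\delta(G')/2)$, uses the Chernoff--Cram\'er theorem to get the large-deviation bound $\mathbb{P}(R_M\geq 0)>\exp(-M\delta(G'))$, and then runs a quasi-Bernoulli percolation argument on a rescaled tree (Lyons, Lyons--Pemantle) to show that $S_v(x)$ stays bounded below along infinitely many vertices almost surely, whence $\sum_v\exp(S_v)=+\infty$ a.e.\ by Kolmogorov's zero--one law. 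Without some such second-moment or percolation input your recurrence claim is unproved. The subsequent upgrade from infinite recurrence to weak mixing is also not the route you sketch: weak mixing of a nonsingular action is not controlled by finite-dimensional pieces of a Koopman representation or by strong proximality of $G\actson\partial T$. The paper instead writes $\mu_j=(1-s)\nu+s\eta_j^s$ for a suitable $\nu\sim\mu_0$ with $1-H^2(\eta_0^s,\eta_1^s)$ still above threshold, so that $G\actson(X,\eta_s)$ is infinitely recurrent, and then invokes the convex-combination phase transition (Claim 1 in the proof of Theorem \ref{thm:phase transition general}, in its lcsc form of Remark \ref{rem:lcsc phase transition}), which rests on the Schmidt--Walters theorem. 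You rightly flag this step as the delicate one, but the mechanism you propose for it is not the one that works here.
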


{\it Acknowledgement.} I thank Stefaan Vaes for his valuable feedback during the process of writing this paper.


\section{Preliminaries}\label{sec:preliminaries}
\subsection{Nonsingular group actions}

Let $(X,\mu), (Y,\nu)$ be standard measure spaces. A Borel map $\vphi \colon X\rightarrow Y$ is called \emph{nonsingular} if the pushforward measure $\vphi_*\mu$ is equivalent with $\nu$. If in addition there exist conull Borel sets $X_0\subset X$ and $Y_0\subset Y$ such that $\vphi\colon X_0\rightarrow Y_0$ is a bijection we say that $\vphi$ is a \emph{nonsingular isomorphism}. We write $\Aut(X,\mu)$ for the group of all nonsingular automorphisms $\vphi\colon X\rightarrow X$, where we identify two elements if they agree almost everywhere. The group $\Aut(X,\mu)$ carries a canonical Polish topology.

A nonsingular group action $G\actson (X,\mu)$ of  a locally compact second countable (lcsc) group $G$ on a standard measure space $(X,\mu)$ is a continuous group homomorphism $G\rightarrow \Aut(X,\mu)$. A nonsingular group action $G\actson (X,\mu)$ is called \emph{essentially free} if the stabilizer subgroup $G_x=\{g\in G:g\cdot x=x\}$ is trivial for a.e. $x\in X$. When $G$ is countable this is the same as the condition that $\mu(\{x\in X:g\cdot x=x\})=0$ for every $g\in G\setminus \{e\}$. We say that $G\actson (X,\mu)$ is \emph{ergodic} if every $G$-invariant Borel set $A\subset X$ satisfies $\mu(A)=0$ or $\mu(X\setminus A)=0$. A nonsingular action $G\actson (X,\mu)$ is called \emph{weakly mixing} if for any ergodic probability measure preserving (pmp) action $G\actson (Y,\nu)$ the diagonal product action $G\actson X\times Y$ is ergodic. If $G$ is not compact and $G\actson (X,\mu)$ is pmp, we say $G\actson X$ is \emph{mixing} if 
\begin{align*}
\lim_{g\rightarrow \infty}\mu(g\cdot A\cap B)=\mu(A)\mu(B) \text{ for every pair of Borel subsets } A,B\subset X.
\end{align*} 

Suppose that $G\actson (X,\mu)$ is a nonsingular action and that $\mu$ is a probability measure. A sequence of Borel subsets $A_n\subset X$ is called \emph{almost invariant} if 
\begin{align*}
\sup_{g\in K}\mu(g\cdot A_n\triangle A_n)\rightarrow 0 \text{ for every compact subset } K\subset G.
\end{align*}
The action $G\actson (X,\mu)$ is called \emph{strongly ergodic} if every almost invariant sequence $A_n\subset X$ is trivial, i.e. $\mu(A_n)(1-\mu(A_n))\rightarrow 0$. The strong ergodicity of $G\actson (X,\mu)$ only depends on the measure class of $\mu$. When $(Y,\nu)$ is a standard measure space and $\nu$ is infinite, a nonsingular action $G\actson (Y,\nu)$ is called strongly ergodic if $G\actson (Y,\nu')$ is strongly ergodic, where $\nu'$ is a probability measure that is equivalent with $\nu$.

Following \cite[Definition A.16]{AIM19}, we say that a nonsingular action $G\actson (X,\mu)$ is \emph{dissipative up to compact stabilizers} if each ergodic component is of the form $G\actson G/ K$, for a compact subgroup $K\subset G$. By \cite[Theorem A.29]{AIM19} a nonsingular action $G\actson (X,\mu)$, with $\mu(X)=1$, is dissipative up to compact stabilizers if and only if 
\begin{align*}
\int_G \frac{dg\mu}{d\mu}(x)d\lambda(g)<+\infty \text{ for a.e. } x\in X,
\end{align*}
where $\lambda$ denotes the left invariant Haar measure on $G$. We say that $G\actson (X,\mu)$ is \emph{infinitely recurrent} if for every nonnegligible subset $A\subset X$ and every compact subset $K\subset G$ there exists $g\in G\setminus K$ such that $\mu(g\cdot A\cap A)>0$. By \cite[Proposition A.28]{AIM19} and Lemma \ref{lem:infinitely recurrent} below, a nonsingular action $G\actson (X,\mu)$, with $\mu(X)=1$, is infinitely recurrent if and only if 
\begin{align*}
\int_G \frac{dg\mu}{d\mu}(x)d\lambda(g)=+\infty \text{ for a.e. } x\in X. 
\end{align*}

A nonsingular action $G\actson (X,\mu)$ is called \emph{dissipative} if it is essentially free and dissipative up to compact stabilizers. In that case there exists a standard measure space $(X_0,\mu_0)$ such that $G\actson X$ is conjugate with the action $G\actson G\times X_0: \;g\cdot(h,x)=(gh,x)$. A nonsingular action $G\actson (X,\mu)$ decomposes, uniquely up to a null set, as $G\actson D\sqcup C$, where $G\actson D$ is dissipative up to compact stabilizers and $G\actson C$ is infinitely recurrent. When $G$ is a countable group and $G\actson (X,\mu)$ is essentially free, we say $G\actson X$ is \emph{conservative} if it is infinitely recurrent.   

\begin{lemma}\label{lem:infinitely recurrent}
Suppose that $G$ is a lcsc group with left invariant Haar measure $\lambda$ and that $(X,\mu)$ is a standard probability space. Assume that $G\actson (X,\mu)$ is a nonsingular action that is infinitely recurrent. Then we have that
\begin{align*}
\int_G \frac{dg\mu}{d\mu}(x)d\lambda(g)=+\infty \text{ for a.e. } x\in X. 
\end{align*}
\end{lemma}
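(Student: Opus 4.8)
The plan is to show that the set $A=\{x\in X:\int_G\frac{dg\mu}{d\mu}(x)\,d\lambda(g)<+\infty\}$ is $G$-invariant and that $G\actson A$ is dissipative up to compact stabilizers; infinite recurrence of $G\actson(X,\mu)$ will then force $\mu(A)=0$, which is exactly the claim. Write $\om_g(x)=\frac{dg\mu}{d\mu}(x)$ for the Radon--Nikodym cocycle, chosen jointly Borel in $(g,x)$, so that $f(x):=\int_G\om_g(x)\,d\lambda(g)$ is a well-defined Borel function $X\to[0,+\infty]$ by Tonelli's theorem, and $A=\{f<+\infty\}$.

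The first step is the invariance of $A$. The cocycle identity reads $\om_{gh}(x)=\om_g(x)\,\om_h(g^{-1}x)$ for a.e.\ $x$, and for a fixed $g_0\in G$ I would rewrite $\om_g(g_0^{-1}x)=\om_{g_0g}(x)/\om_{g_0}(x)$ and use left-invariance of the Haar measure $\lambda$ to obtain
\begin{align*}
f(g_0^{-1}x)=\int_G\om_g(g_0^{-1}x)\,d\lambda(g)=\frac{1}{\om_{g_0}(x)}\int_G\om_{g_0g}(x)\,d\lambda(g)=\frac{f(x)}{\om_{g_0}(x)}.
\end{align*}
Since $\om_{g_0}(x)\in(0,+\infty)$ for a.e.\ $x$, this shows $f(g_0^{-1}x)<+\infty$ if and only if $f(x)<+\infty$, so $A$ is invariant under each $g_0$ up to a null set; as $G$ is second countable this upgrades to a genuinely $G$-invariant Borel set after discarding a null set.

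The second step is to identify the dynamics on $A$. On $A$ we have $\int_G\frac{dg\mu}{d\mu}(x)\,d\lambda(g)<+\infty$ for a.e.\ $x$, so \cite[Theorem A.29]{AIM19} applies to the restricted action and shows that $G\actson A$ is dissipative up to compact stabilizers. On the other hand, infinite recurrence is inherited by invariant subsets directly from its combinatorial definition: given any nonnegligible $B\subseteq A$ and any compact $K\subseteq G$, infinite recurrence of $G\actson(X,\mu)$ already yields $g\in G\setminus K$ with $\mu(gB\cap B)>0$. Hence, if $\mu(A)>0$, the action $G\actson A$ (with $\mu$ restricted and normalized) is at the same time dissipative up to compact stabilizers and infinitely recurrent.

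Finally I would invoke the uniqueness of the decomposition $G\actson D\sqcup C$ recalled above: a dissipative-up-to-compact-stabilizers action has $C$-part null, while an infinitely recurrent action has $D$-part null, so an action enjoying both properties must be supported on a null set. This forces $\mu(A)=0$, i.e.\ $f=+\infty$ a.e., as desired. The main obstacle is the invariance statement in the first step: it requires a jointly measurable representative of the Radon--Nikodym cocycle together with careful use of the cocycle identity and of left-invariance of $\lambda$, plus a little bookkeeping to pass from ``$g_0$-invariant for each $g_0$ up to null sets'' to a single $G$-invariant Borel set. The remaining steps are then soft consequences of \cite[Theorem A.29]{AIM19} and of the uniqueness of the dissipative/recurrent decomposition.
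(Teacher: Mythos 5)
Your first two steps are fine and in fact coincide with the paper's setup: the cocycle computation showing that $A=\{x:\int_G\frac{dg\mu}{d\mu}(x)\,d\lambda(g)<+\infty\}$ is $G$-invariant is correct (the paper merely asserts this invariance), and applying \cite[Theorem A.29]{AIM19} on $A$ together with the observation that infinite recurrence passes to nonnegligible invariant subsets is exactly how one reduces to showing that a nonnull action cannot be simultaneously dissipative up to compact stabilizers and infinitely recurrent. The problem is your final step. You dispose of that reduction by invoking the uniqueness of the decomposition $X=D\sqcup C$, but that uniqueness is logically equivalent to the very statement you need: to see that two decompositions $D\sqcup C=D'\sqcup C'$ agree up to null sets, one must show that the invariant set $D'\cap C$ --- which is both dissipative up to compact stabilizers and infinitely recurrent --- is null. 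The paper's own logical order confirms that the uniqueness is downstream of this lemma, not upstream: the equivalence ``infinitely recurrent $\iff\int_G\frac{dg\mu}{d\mu}\,d\lambda=+\infty$ a.e.'' is attributed to \cite[Proposition A.28]{AIM19} \emph{together with} Lemma \ref{lem:infinitely recurrent}, and the decomposition statement is recorded only afterwards. Unless you can point to an independent proof of the uniqueness (it is not among the cited results of \cite{AIM19} used here), your argument is circular.

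The missing content --- that a dissipative-up-to-compact-stabilizers action on a nonnull space is not infinitely recurrent --- is precisely what the paper's proof supplies, and it is not soft. One passes to the space $Y$ of ergodic components, uses \cite[Theorem A.29]{AIM19} to see that almost every component is of the form $G/K_y$ with $K_y\subset G$ compact, chooses a Borel section $\theta\colon Y_0\rightarrow X$, exhausts $G$ by compact sets $G_n$, and puts $U_n=\{y:K_y\subset G_n\}$ and $A_n=\{g\cdot\theta(y):g\in G_n,\,y\in U_n\}$. These sets exhaust $X$ up to a null set, so some $A_{n_0}$ contains a nonnegligible Borel set $B$, and a short computation shows $\mu(h\cdot B\cap B)=0$ for every $h$ outside the compact set $G_{n_0}G_{n_0}G_{n_0}^{-1}$, contradicting infinite recurrence. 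This measurable-selection and exhaustion argument is the real work of the lemma and is entirely absent from your proposal; you would need to either reproduce it or find a genuine external reference for the uniqueness of the dissipative/recurrent decomposition.
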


\begin{proof}
Note that the set 
\begin{align*}
D=\{x\in X:\int_G\frac{dg\mu}{d\mu}(x)d\lambda(g)<+\infty\}
\end{align*}
is $G$-invariant. Therefore it suffices to show that $G\actson X$ is not infinitely recurrent under the assumption that $D$ has full measure. 

Let $\pi\colon (X,\mu)\rightarrow (Y,\nu)$ be the projection onto the space of ergodic components of $G\actson X$. Then there is a conull Borel subset $Y_0\subset Y$ and a Borel map $\theta\colon Y_0\rightarrow X$ such that $(\pi\circ \theta)(y)=y$ for every $y\in Y_0$. 

Write $X_y=\pi^{-1}(\{y\})$. By \cite[Theorem A.29]{AIM19}, for a.e. $y\in Y$ there exists a compact subgroup $K_y\subset G$ such that $G\actson X_y$ is conjugate with $G\actson G/ K_y$. Let $G_n\subset G$ be an increasing sequence of compact subsets of $G$ such that $\bigcup_{n\geq 1}\overset{\circ}{G_n}=G$. 
For every $x\in X$, write $G_x=\{g\in G:g\cdot x=x\}$ for the stabilizer subgroup of $x$. Using an argument as in \cite[Lemma 10]{MRV11} one shows that for each $n\geq 1$ the set $\{x\in X:G_x\subset G_n\}$ is Borel. Thus, for every $n\geq 1$, the set 
\begin{align*}
U_n=\{y\in Y_0:K_y\subset G_n\}=\{y\in Y_0:G_{\theta(y)}\subset G_n\}
\end{align*}
is a Borel subset of $Y$ and we have that $\nu(\bigcup_{n\geq 1 } U_n)=1$. Therefore, the sets 
\begin{align*}
A_n=\{g\cdot \theta(y):g\in G_n, \;y\in U_{n}\}
\end{align*}
are analytic and exhaust $X$ up to a set of measure zero. So there exists an $n_0\in \mathbb{N}$ and a nonnegligible Borel set $B\subset A_{n_0}$.
Suppose that $h\in G$ is such that $h\cdot B\cap B\neq \emptyset$, then there exists $y\in U_{n_0}$ and $g_1,g_2\in G_{n_0}$ such that $hg_1\cdot \theta(y)=g_2\cdot \theta(y)$, and we get that $h\in G_{n_0}K_yG_{n_0}^{-1}\subset G_{n_0}G_{n_0}G_{n_0}^{-1}$. In other words, for $h\in G$ outside the compact set $G_{n_0}G_{n_0}G_{n_0}^{-1}$ we have that $\mu(h\cdot B \cap B)=0$, so that $G\actson X$ is not infinitely recurrent.
\end{proof}

We will frequently use the following result by Schmidt--Walters. Suppose that $G\actson (X,\mu)$ is a nonsingular action that is infinitely recurrent and suppose that $G\actson (Y,\nu)$ is pmp and mixing. Then by \cite[Theorem 2.3]{SW81} we have that
\begin{align*}
L^{\infty}(X\times Y)^{G}=L^{\infty}(X)^{G}\ovt 1,
\end{align*}
where $G\actson X\times Y$ acts diagonally. Although \cite[Theorem 2.3]{SW81} demands proper ergodicity of the action $G\actson (X,\mu)$, the infinite recurrence assumption is sufficient as remarked in \cite[Remark 7.4]{AIM19}.

\subsection{The Maharam extension and crossed products}
Let $(X,\mu)$ be a standard measure space. For any nonsingular automorphism $\vphi\in \Aut(X,\mu)$, we define its \emph{Maharam extension} by
\begin{align*}
\widetilde{\vphi}\colon X\times \R\rightarrow X\times \R:\;\;\;\; \widetilde{\vphi}(x,t)=(\vphi(x),t+\log(d\vphi^{-1}\mu/d\mu) (x)).
\end{align*}
Then $\widetilde{\vphi}$ preserves the infinite measure $\mu\times \exp(-t)dt$. The assignment $\vphi\mapsto\widetilde{\vphi}$ is a continuous group homomorphism from $\Aut(X)$ to $\Aut(X\times \R)$. Thus for each nonsingular group action $G\actson (X,\mu)$, by composing with this map, we obtain a nonsingular group action $G\actson X\times \R$, which we call the \emph{Maharam extension of $G\actson X$}. If $G\actson X$ is a nonsingular group action, the translation action $\R\actson X\times \R$ in the second component commutes with the Maraham extension $G\actson X\times \R$. Therefore, we get a well defined action $\R\actson L^{\infty}(X\times \R)^{G}$, which is the \emph{Krieger flow} associated to the action $G\actson X$. The Krieger flow is given by $\R\actson \R$ if and only if there exists a $G$-invariant $\sigma$-finite measure $\nu$ on $X$ that is equivalent with $\mu$.

Suppose that $M\subset B(\mathcal{H})$ is a von Neumann algebra represented on the Hilbert space $\mathcal{H}$ and that $\alpha\colon G\actson M$ is a continuous action on $M$ of a lcsc group $G$. Then the \emph{crossed product von Neumann algebra} $M\rtimes_{\alpha} G\subset B(L^2(G,\mathcal{H}))$ is the von Neumann algebra generated by the operators $\{\pi(x)\}_{x\in M}$ and $\{u_h\}_{h\in G}$ acting on $\xi\in L^2(G,\mathcal{H})$ as 
\begin{align*}
(\pi(x)\xi)(g)=\alpha_{g^{-1}}(x)\xi(g),\hspace{1cm}(u_h\xi)(g)=\xi(h^{-1}g).
\end{align*} 
In particular, if $G\actson (X,\mu)$ is a nonsingular group action, the crossed product $L^{\infty}(X)\rtimes G\subset B(L^2(G\times X))$ is the von Neumann algebra generated by the operators 
\begin{align*}
(\pi(H)\xi)(g,x)=H(g\cdot x)\xi(g,x), \hspace{1cm} (u_h\xi)(g,x)=\xi(h^{-1}g,x),
\end{align*}
for $H\in L^{\infty}(X)$ and $h\in G$. If $G\actson X$ is nonsingular essentially free and ergodic, then $L^{\infty}(X)\rtimes G$ is a factor. When moreover $G$ is a unimodular group, the Krieger flow of $G\actson X$ equals the flow of weights of the crossed product von Neumann algebra $L^{\infty}(X)\rtimes G$. For nonunimodular groups this is not necessarily true, motivating the following definition. 

\begin{definition}\label{def:modular Maharam}
Let $G$ be an lcsc group with modular function $\Delta\colon G\rightarrow\R_{>0}$. Let $\lambda$ denote the Lebesgue measure on $\R$. Suppose that $\alpha\colon  G\actson (X,\mu)$ is a nonsingular action. We define the \emph{modular Maharam extension} of $G\actson X$ as the nonsingular action 
\begin{align*}
\beta\colon G\actson (X\times \R, \mu\times \lambda): \;\;\;\; g\cdot(x,t)=(g\cdot x, t+\log(\Delta(g))+\log(dg^{-1}\mu/d\mu)(x)).
\end{align*}
Let $L^{\infty}(X\times \R)^{\beta}$ denote the subalgebra of $\beta$-invariant elements. We define the \emph{flow of weights} associated to $G\actson X$ as the translation action  $\R\actson L^{\infty}(X\times \R)^{\beta}: (t\cdot H)(x,s)=H(x,s-t)$.
\end{definition}

As we explain below, the flow of weights associated to an essentially free ergodic nonsingular action $G\actson X$ equals the flow of weights of the crossed product factor $L^{\infty}(X)\rtimes G$, justifying the terminology. See also \cite[Proposition 4.1]{Sa74}.

Let $\alpha\colon G\actson X$ be an essentially free ergodic nonsingular group action with modular Maharam extension $\beta\colon G\actson X\times \R$. By \cite[Proposition 1.1]{Sa74} there is a canonical normal semifinite faithful weight $\vphi$ on $L^{\infty}(X)\rtimes_{\alpha} G$ such that the modular automorphism group $\sigma^{\vphi}$ is given by
\begin{align*}
\sigma^{\vphi}_t(\pi(H))=\pi(H), \hspace{1cm} \sigma^{\vphi}_t(u_g)=\Delta(g)^{it}u_g\pi((dg^{-1}\mu/d\mu)^{it}),
\end{align*}
where $\Delta\colon G\rightarrow \R_{>0}$ denotes the modular function of $G$. 

For an element $\xi\in L^2(\R, L^2(G\times X))$ and $(g,x)\in G\times X$, write $\xi_{g,x}$ for the map given by $\xi_{g,x}(s)=\xi(s,g,x)$. Then by Fubini's theorem $\xi_{g,x}\in L^2(\R)$ for a.e. $(g,x)\in G\times X$. Let $U\colon L^2(\R, L^2(G\times X))\rightarrow L^2(G,L^2(X\times \R))$ be the unitary given on $\xi\in L^2(\R, L^2(G\times X))$ by
\begin{align*}
(U\xi)(g,x,t)=\mathcal{F}^{-1}(\xi_{g,x})(t+\log(\Delta(g))+\log(dg^{-1}\mu/d\mu)(x)),
\end{align*}
where $\mathcal{F}^{-1}\colon L^2(\R)\rightarrow L^2(\R)$ denotes the inverse Fourier transform. One checks that conjugation by $U$ induces an isomorphism 
\begin{align*}
\Psi\colon (L^{\infty}(X)\rtimes_{\alpha} G)\rtimes_{\sigma^{\vphi}}\R\rightarrow L^{\infty}(X\times \R)\rtimes_{\beta}G.
\end{align*}
Let $\kappa\colon L^{\infty}(X\times \R)\rightarrow  L^{\infty}(X\times \R)\rtimes_{\beta}G$ be the inclusion map and let $\gamma\colon \R\actson L^{\infty}(X\times \R)\rtimes_{\beta}G$ be the action given by 
\begin{align*}
\gamma_t(\kappa(H))(x,s)=\kappa(H)(x,s-t),\hspace{1cm}\gamma_t(u_g)=u_g.
\end{align*}
Then one can verify that $\Psi$ conjugates the dual action $\widehat{\sigma^{\vphi}}\colon \R\actson  (L^{\infty}(X)\rtimes_{\alpha} G)\rtimes_{\sigma^{\vphi}}\R$ and $\gamma$. Therefore we can identify the flow of weights $\R\actson \mathcal{Z}((L^{\infty}(X)\rtimes_{\alpha} G)\rtimes_{\sigma^{\vphi}}\R)$ with $\R\actson \mathcal{Z}(L^{\infty}(X\times \R)\rtimes_{\beta} G)\cong L^{\infty}(X\times \R)^{\beta}$: the flow of weights associated to $G\actson X$.

\begin{remark}\label{rem:type classificication}
It will be useful to speak about the \emph{Krieger type} of a nonsingular ergodic action $G\actson X$. In light of the discussion above, we will only use this terminology for countable groups $G$, so that no confusion arises with the type of the crossed product von Neumann algebra $L^{\infty}(X)\rtimes G$. So assume that $G$ is countable and that $G\actson (X,\mu)$ is a nonsingular ergodic action. Then the Krieger flow is ergodic and we distinguish several cases. If $\nu$ is atomic, we say that $G\actson X$ is of type I. If $\nu$ is nonatomic and finite, we say that $G\actson X$ is of type II$_{1}$. If $\nu$ is nonatomic and infinite, we say $G\actson X$ is of type II$_{\infty}$. If the Krieger flow is given by $\R\actson \R/\log(\lambda)\Z$ with $\lambda\in (0,1)$, we say that $G\actson X$ is of type III$_{\lambda}$. If the Krieger flow is the trivial flow $\R\actson \{\ast\}$, we say that $G\actson X$ is of type III$_{1}$. If the Krieger flow is properly ergodic, i.e. every orbit has measure zero, we say that $G\actson X$ is of type III$_{0}$.
 \end{remark}

\subsection{Nonsingular Bernoulli actions}

Suppose that $G$ is a countable infinite group and that $(\mu_g)_{g\in G}$ is a family of equivalent probability measures on a standard Borel space $X_0$. The action
\begin{align}\label{eq:Bernoulli action prelim}
G\actson (X,\mu)=\prod_{h\in G}(X_0,\mu_h):\;\;\;\; (g\cdot x)_h=x_{g^{-1}h}
\end{align} 
is called the \emph{Bernoulli action}. For two probability measures $\nu,\eta$ on a standard Borel space $Y$, the \emph{Hellinger distance} $H^2(\nu,\eta)$ is defined by
\begin{align}\label{eq:Hellinger distance}
H^2(\nu,\eta)=\frac{1}{2}\int_Y\left(\sqrt{d\nu/d\zeta}-\sqrt{d\eta/d\zeta}\right)^2d\zeta,
\end{align}
where $\zeta$ is any probability measure on $Y$ such that $\nu,\eta \prec \zeta$. By Kakutani's criterion for equivalence of infinite product measures \cite{Kak48} the Bernoulli action \eqref{eq:Bernoulli action prelim} is nonsingular if and only if 
\begin{align*}
\sum_{h\in G}H^2(\mu_{h},\mu_{gh})<+\infty, \text{ for every }  g\in G.
\end{align*}
If $(X,\mu)$ is nonatomic and the Bernoulli action \eqref{eq:Bernoulli action prelim} is nonsingular, then it is essentially free by \cite[Lemma 2.2]{BKV19}.

Suppose that $I$ is a countable infinite set and that $(\mu_i)_{i\in I}$ is a family of equivalent probability measures on a standard Borel space $X_0$. If $G$ is a lcsc group that acts on $I$, the action
\begin{align}\label{eq:generalized nonsingular Bernoulli prelim}
G\actson (X,\mu)=\prod_{i\in I}(X_0,\mu_i):\;\;\;\; (g\cdot x)_i=x_{g^{-1}\cdot i}
\end{align}
is called the \emph{generalized Bernoulli action} and it is nonsingular if and only if $\sum_{i\in I}H^2(\mu_i,\mu_{g\cdot i})<+\infty$ for every $g\in G$. When $\nu$ is a probability measure on $X_0$ such that $\mu_i=\nu$ for every $i\in I$, the generalized Bernoulli action \eqref{eq:generalized nonsingular Bernoulli prelim} is pmp and it is mixing if and only if the stabilizer subgroup $G_i=\{g\in G:g\cdot i=i\}$ is compact for every $i\in I$. In particular, if $G$ is countable infinite, the pmp Bernoulli action $G\actson (X_0,\mu_0)^{G}$ is mixing.

\subsection{Groups acting on trees}

Let $T=(V(T),E(T))$ be a locally finite tree, so that the edge set $E(T)$ is a symmetric subset of $ V(T)\times V(T)$ with the property that vertices $v,w\in V(T)$ are adjacent if and only if $(v,w),(w,v)\in E(T)$. When $T$ is clear from the context, we will write $E$ instead of $E(T)$. Also we will often write $T$ instead of $V(T)$ for the vertex set. For any two vertices $v,w\in T$ let $[v,w]$ denote the smallest subtree of $T$ that contains $v$ and $w$. The distance between vertices $v,w\in T$ is defined as $d(v,w)=|V([v,w])|-1$. Fixing a root $\rho\in T$, we define the \emph{boundary} $\partial T$ of $T$ as the collection of all infinite line segments starting at $\rho$. We equip $\partial T$ with a metric $d_\rho$ as follows. If $\omega,\omega'\in \partial T$, let $v\in T$ be the unique vertex such that $d(\rho,v)=\sup_{v\in \omega\cap \omega'}d(\rho, v)$ and define 
\begin{align*}
d_\rho(\omega,\omega')=\exp(-d(\rho,v)).
\end{align*}
Then, up to homeomorphism, the space $(\partial T, d_{\rho})$ does not depend on the chosen root $\rho\in T$. Furthermore, the Hausdorff dimension $\dim_H \partial T$ of $(\partial T, d_\rho)$ is also independent of the choice of $\rho\in T$.

Let $\Aut(T)$ denote the group of automorphisms of $T$. By \cite[Proposition 3.2]{Tit70}, if $g\in \Aut(T)$, then either
\begin{itemize}
\item $g$ fixes a vertex or interchanges a pair of vertices. In this case we say $g$ is \emph{elliptic}.
\item or there exists a bi-infinite line segment $L\subset T$, called the \emph{axis} of $g$, such that $g$ acts on $L$ by nontrivial translation. In this case we say $g$ is \emph{hyperbolic}.
\end{itemize}
We equip $\Aut(T)$ with the topology of pointwise convergence. A subgroup $G\subset \Aut(T)$ is closed with respect to this topology if and only if for every $v\in T$ the stabilizer subgroup $G_v=\{g\in G:g\cdot v= v\}$ is compact. An action of a lcsc group $G$ on $T$ is a continuous homomorphism $G\rightarrow \Aut(T)$. We say the action $G\actson T$ is \emph{cocompact} if there is a finite set $F\subset E(T)$ such that $G\cdot F=E(T)$. A subgroup $G\subset \Aut(T)$ is called \emph{nonelementary} if it does not fix any point in $T\cup \partial T$ and does not interchange any pair of points in $T\cup \partial T$. Equivalently, $G\subset \Aut(T)$ is nonelementary if there exist hyperbolic elements $h,g\in G$ with axes $L_h$ and $L_g$ such that $L_h\cap L_g$ is finite. If $G\subset \Aut(T)$ is a nonelementary closed subgroup, there exists a unique minimal $G$-invariant subtree $S\subset T$ and $G$ is compactly generated if and only if $G\actson S$ is cocompact (see \cite[Section 2]{CM11}). Recall from \eqref{eq:Poincare exponent intro} the definition of the Poincaré exponent $\delta(G\actson T)$ of a subgroup $G\subset \Aut(T)$. If $G\subset \Aut(T)$ is a closed subgroup such that $G\actson T$ is cocompact, then we have that $\delta(G\actson T)=\dim_{H}\partial T$.

\section{Phase transitions of nonsingular Bernoulli actions: proof of Theorems \ref{thm:phase transition} \& \ref{thm:strong ergodicity}}\label{sec:phase}

Let $G$ be a countable infinite group and let $(\mu_g)_{g\in G}$ be a family of equivalent probability measures on a standard Borel space $X_0$. Let $\nu$ also be a probability measure on $X_0$. For  $t\in [0,1]$ we define the family of probability measures 
\begin{align}\label{eq:convex combo}
\mu_g^t=(1-t)\nu+t\mu_g, \;\;\;\; g\in G.
\end{align}
We write $\mu_t$ for the infinite product measure $\mu_t=\prod_{g\in G}\mu_g^t$ on $X=\prod_{g\in G}X_0$. We prove Theorem \ref{thm:phase transition general} below, that is slightly more general than Theorem \ref{thm:phase transition}.

\begin{theorem}\label{thm:phase transition general}
Let $G$ be a countable infinite group and let $(\mu_g)_{g\in G}$ be a family of equivalent probability measures on a standard probability space $X_0$, that is not supported on a single atom. Assume that the Bernoulli action $G\actson \prod_{g\in G}(X_0,\mu_g)$ is nonsingular. Let $\nu$ also be a probability measure on $X_0$. Then for every $t\in [0,1]$, the Bernoulli action
\begin{align}\label{eq:G actson (X,mu_t)}
G\actson (X,\mu_t)=\prod_{g\in G}(X_0,(1-t)\nu+t\mu_g)
\end{align}
is nonsingular. Assume in addition that one of the following conditions hold.
\begin{enumerate}
\item $\nu\sim \mu_e$.
\item $\nu\prec \mu_e $ and $\sup_{g\in G}|\log d\mu_g/d\mu_e(x)|<+\infty$ for a.e $x\in X_0$.
\end{enumerate}
Then there exists a $t_1\in [0,1]$ such that $G\actson (X,\mu_t)$ is dissipative for every $t>t_1$ and weakly mixing for every $t<t_1$.
\end{theorem}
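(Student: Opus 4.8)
The plan is to route everything through the Hellinger affinity $\rho(\alpha,\beta)=1-H^2(\alpha,\beta)=\int\sqrt{d\alpha\,d\beta}$ and the integrability criterion of \cite[Theorem A.29]{AIM19}. First I would record the elementary estimate underlying nonsingularity: writing the densities of $\nu,\mu_a,\mu_b$ against a dominating measure and applying the pointwise AM--GM inequality $\sqrt{(u_1+u_2)(v_1+v_2)}\ge\sqrt{u_1v_1}+\sqrt{u_2v_2}$ to the two summands of $\mu_a^t$ and $\mu_b^t$, one gets $\rho(\mu_a^t,\mu_b^t)\ge(1-t)+t\,\rho(\mu_a,\mu_b)$, i.e. $H^2(\mu_a^t,\mu_b^t)\le t\,H^2(\mu_a,\mu_b)$. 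Summing over the orbit and using that $G\actson\prod_g(X_0,\mu_g)$ is nonsingular gives $\sum_h H^2(\mu_h^t,\mu_{gh}^t)\le t\sum_h H^2(\mu_h,\mu_{gh})<\infty$, so by Kakutani's criterion $G\actson(X,\mu_t)$ is nonsingular for every $t\in[0,1]$. Since $X_0$ is not a single atom and $G$ is infinite, $(X,\mu_t)$ is nonatomic and the action is essentially free by \cite[Lemma 2.2]{BKV19}; hence ``dissipative up to compact stabilizers'' reduces to dissipative, and ``infinitely recurrent'' to conservative.

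The object controlling the transition is the Hellinger series $S(t)=\sum_{g\in G}\prod_{h\in G}\rho(\mu_{g^{-1}h}^t,\mu_h^t)\in(0,+\infty]$, each inner product converging by nonsingularity. Two facts drive the argument. \emph{Monotonicity:} for fixed $a,b$ the map $t\mapsto\rho(\mu_a^t,\mu_b^t)$ has integrand $\sqrt{p(t)q(t)}$ with $p,q$ affine and nonnegative in $t$, hence the geometric mean of two affine functions, which is concave; so $t\mapsto\rho(\mu_a^t,\mu_b^t)$ is concave, equals $1$ at $t=0$, and — a concave function on $[0,1]$ attaining its maximum at the left endpoint being non-increasing — is non-increasing. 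Thus each factor, each term, and $S$ itself are non-increasing in $t$. \emph{The easy implication} $S(t)<\infty\Rightarrow$ dissipative: by multiplicativity of $\rho$ over product measures, $\int_X\sqrt{dg\mu_t/d\mu_t}\,d\mu_t=\prod_h\rho(\mu_{g^{-1}h}^t,\mu_h^t)$, so $\sum_g\int\sqrt{dg\mu_t/d\mu_t}\,d\mu_t=S(t)<\infty$ forces $\sum_g\sqrt{dg\mu_t/d\mu_t}<\infty$ a.e.; the summands then tend to $0$, so eventually $dg\mu_t/d\mu_t\le\sqrt{dg\mu_t/d\mu_t}$ and $\sum_g dg\mu_t/d\mu_t<\infty$ a.e., which is exactly the dissipativity criterion of \cite[Theorem A.29]{AIM19} (Haar measure being counting measure).

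The crux is the converse, $S(t)=\infty\Rightarrow$ conservative, and I expect this to be the main obstacle. Again working with the square roots of $R_g=dg\mu_t/d\mu_t=\prod_h r_{g,h}$, which are products of independent single-coordinate densities, one has $\mathbb{E}\sqrt{R_g}=\prod_h\rho(\mu_{g^{-1}h}^t,\mu_h^t)$ and pairwise correlations $\mathbb{E}\sqrt{R_gR_{g'}}=\prod_h\rho(\mu_{g^{-1}h}^t,\mu_{g'^{-1}h}^t)$, and I would run a second-moment (variance) estimate to convert the divergence $\sum_g\mathbb{E}\sqrt{R_g}=S(t)=\infty$ into $\sum_g R_g=\infty$ a.e. This is precisely where hypotheses (1) and (2) are indispensable: full equivalence $\nu\sim\mu_e$, or domination $\nu\prec\mu_e$ with bounded $\log$-density, provides the uniform control on the densities $d\mu_g^t/d\mu_e^t$ needed to bound the correlation products and to prevent the mass of $R_g$ from escaping to sets of vanishing measure; here I would adapt the conservativity estimates for nonsingular Bernoulli actions from \cite{BKV19}. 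Once conservativity is known, weak mixing follows from the Schmidt--Walters theorem \cite[Theorem 2.3]{SW81} quoted above, combined with triviality of the tail of the product measure (a $0$--$1$ law giving ergodicity, upgraded to weak mixing for infinite-group Bernoulli actions). Finally, setting $t_1=\inf\{t\in[0,1]:S(t)<\infty\}$ and using monotonicity of $S$, we obtain $S(t)<\infty$ for $t>t_1$ — hence dissipative — and $S(t)=\infty$ for $t<t_1$ — hence conservative and therefore weakly mixing — which is the asserted dichotomy.
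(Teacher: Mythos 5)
Your first half (nonsingularity via $H^2(\mu_a^t,\mu_b^t)\le t\,H^2(\mu_a,\mu_b)$, essential freeness, monotonicity of the Hellinger series $S(t)$, and $S(t)<\infty\Rightarrow$ dissipative) is correct and matches standard arguments. But the crux step you flag — ``$S(t)=\infty\Rightarrow$ conservative'' — is a genuine gap, and in fact the implication is \emph{false} in the generality you state it: the paper's Remark 3.2 exhibits Bernoulli actions (with $\nu\prec\mu_e$ but neither hypothesis (1) nor (2) holding) whose conservative part is nonnegligible, which forces $S(t)=\infty$, yet whose dissipative part is also nonnegligible. Even under hypothesis (1), a second-moment estimate requires comparing $\mathbb{E}\sqrt{R_gR_{g'}}$ with $\mathbb{E}\sqrt{R_g}\,\mathbb{E}\sqrt{R_{g'}}$, and no such control is available in this generality; this is precisely why the paper emphasizes that the critical value $t_1$ is in general not computable, and why \cite{KS20} needed ad hoc estimates for a single $\Z$-example. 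Your identification $t_1=\inf\{t:S(t)<\infty\}$ is therefore unsubstantiated: if the true transition point (the supremum of non-dissipative parameters) were strictly smaller, your candidate $t_1$ would have dissipative parameters below it. A second, independent gap is the step ``conservative $\Rightarrow$ weakly mixing at the same $t$'': tail triviality of a product measure does not give ergodicity of a \emph{nonsingular} Bernoulli action (this is the hard content of \cite{BKV19}), and Schmidt--Walters alone does not produce invariant functions to kill.

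The paper's proof avoids both problems by never characterizing conservativity. It proves two strict-monotonicity claims. Claim 1: if $G\actson(X,\mu_t)$ is conservative then $G\actson(X,\mu_s)$ is weakly mixing for every $s<t$; this uses the identity $\mu_g^{sr}=(1-r)\nu+r\mu_g^{s}$ to reduce to $t=1$, realizes $\mu_s$ as the image of $\mu_1\times\mu_0\times\lambda^G$ under the coordinatewise resampling map $\Psi$, applies Schmidt--Walters to the conservative action on $(Y\times X,\eta\times\mu_1)$ times the \emph{mixing pmp} Bernoulli factor $(X\times Z,\mu_0\times\lambda^G)$, and then flips the $z_g$-coordinates to show an invariant function is independent of every coordinate. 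Claim 2 (where $\nu\sim\mu_e$ enters): if $G\actson(X,\mu_t)$ is not dissipative then $G\actson(X,\mu_s)$ is conservative for every $s<t$, because $\Psi$ pushes the restriction of $\mu_1\times\mu_0\times\lambda^G$ to $C\times X\times Z$ ($C$ the conservative part) onto a measure equivalent to $\mu_s$ — the equivalence being the technical heart, proved by a disintegration argument that uses $\nu\sim\mu_e$. Under hypothesis (2) conservativity of every $\mu_t$ is instead imported from \cite[Proposition 4.3]{BV20}. Setting $t_1=\sup\{t:\text{not dissipative}\}$ and chaining $s<r<t$ through the two claims gives the dichotomy. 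If you want to salvage your outline, you would need to replace the unproven converse about $S(t)$ with an argument of this factor-map type; the Hellinger series alone cannot carry the conservative/weakly mixing half of the theorem.
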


\begin{remark}
One might hope to prove a completely general phase transition result that only requires $\nu\prec \mu_e$, and  not the additional assumption that $\sup_{g\in G}|\log d\mu_g/d\mu_e(x)|<+\infty$ for a.e. $x\in X_0$. However, the following example shows that this is not possible. 

Let $G$ be any countable infinite group and let $G\actson \prod_{g\in G}(C_0,\eta_g)$ be a conservative nonsingular Bernoulli action. Note that Theorem \ref{thm:phase transition general} implies that $$G\actson \prod_{g\in G}(C_0,(1-t)\eta_e+t\eta_g)$$ is conservative for every $t<1$. Let $C_1$ be a standard Borel space and let $(\mu_g)_{g\in G}$ be a family of equivalent probability measures on $X_0=C_0\sqcup C_1$ such that $0<\sum_{g\in G}\mu_g(C_1)<+\infty$ and such that $\mu_g\big|_{C_0}=\mu_g(C_0)\eta_g$. Then the Bernoulli action $G\actson (X,\mu)=\prod_{g\in G}(X_0,\mu_g)$ is nonsingular with nonnegligible conservative part $C_0^{G}\subset G$ and dissipative part $X\setminus C_0^G$. Taking $\nu=\eta_e\prec \mu_e$, for each $t<1$ the Bernoulli action $G\actson (X,\mu_t)= \prod_{g\in G}(X_0,(1-t)\eta_e+t\mu_g)$ is constructed in the same way, by starting with the conservative Bernoulli action $G\actson \prod_{g\in G}(C_0,(1-t)\eta_e+t\eta_g)$. So for every $t\in (0,1)$ the Bernoulli action $G\actson (X,\mu_t)$ has nonnegligible conservative part and nonnegligible dissipative part.
\end{remark}

We can also prove a version of Theorem \ref{thm:strong ergodicity} in the more general setting of Theorem \ref{thm:phase transition general}.

\begin{theorem}\label{thm:strong ergodicity general}
Let $G$ be a countable infinite nonamenable group. Make the same assumptions as in Theorem \ref{thm:phase transition general} and consider the nonsingular Bernoulli actions $G\actson (X,\mu_t)$ given by \eqref{eq:G actson (X,mu_t)}. Assume moreover that 
\begin{enumerate}
\item $\nu\sim \mu_e$, or
\item $\nu\prec \mu_e$ and $\sup_{g\in G}|\log d\mu_g/d\mu_e(x)|<+\infty$ for a.e. $x\in X_0$.
\end{enumerate}
Then there exists a $t_0>0$ such that $G\actson (X,\mu_t)$ is strongly ergodic for every $t<t_0$.
\end{theorem}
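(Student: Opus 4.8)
The plan is to deduce strong ergodicity from a spectral gap for the Koopman representation of $G\actson(X,\mu_t)$, and to produce this spectral gap from the nonamenability of $G$ together with the fact that, as $t\to0$, every $\mu_g^t$ collapses onto the common measure $\nu$. Write $\kappa^t$ for the Koopman representation on $L^2(X,\mu_t)$, given by $(\kappa^t_g\xi)(x)=\xi(g^{-1}x)\,(dg^{-1}\mu_t/d\mu_t)(x)^{1/2}$, and use the Kakutani factorization $L^2(X,\mu_t)=\bigotimes_{g\in G}(L^2(X_0,\mu_g^t),1)$ to split $\cH^t:=L^2(X,\mu_t)\ominus\C1=\bigoplus_{n\geq1}\cH_n^t$ into Bernoulli chaoses. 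The first chaos $\cH_1^t=\bigoplus_{g\in G}\cK_g^t$, with $\cK_g^t=L^2(X_0,\mu_g^t)\ominus\C1$, carries the representation in which $g$ sends the summand $\cK_h^t$ to $\cK_{gh}^t$, twisted by the unitary coming from $(d\mu^t_h/d\mu^t_{gh})^{1/2}$ and scaled by the overlaps $1-H^2(\mu^t_{g^{-1}k},\mu^t_k)$ of the remaining coordinates $k\neq gh$.

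By Kesten's criterion, nonamenability of $G$ provides a symmetric finitely supported probability measure $m$ on $G$ with $\|\lambda(m)\|<1$, where $\lambda$ is the left regular representation. The first step is to reduce the theorem to the inequality $\|\kappa^t(m)|_{\cH^t}\|<1$, i.e. to a spectral gap: a nontrivial almost invariant sequence of sets $A_n$ yields, after replacing $1_{A_n}$ by $1_{A_n}-\mu_t(A_n)$ and normalizing, a sequence of unit vectors in $\cH^t$ that is almost invariant for $\kappa^t$; controlling the Radon--Nikodym factors (harmless for $t$ small, where the action is far from dissipative) shows these vectors would violate the spectral gap. Thus it suffices to establish the spectral gap for $t<t_0$.

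The heart of the proof is to show $\|\kappa^t(m)|_{\cH^t}\|\to\|\lambda(m)\|$ as $t\to0^+$. At $t=0$ all the $\mu_g^0$ equal $\nu$, the twisting unitaries are trivial and the overlaps equal $1$, so $\cH_1^0=\ell^2(G)\ot(L^2(X_0,\nu)\ominus\C1)$ carries $\lambda\ot\id$; by Fell absorption the higher chaoses $\cH_n^0$ are again weakly contained in $\lambda$, so $\|\kappa^0(m)|_{\cH^0}\|\leq\|\lambda(m)\|<1$. For $t>0$ I would compute the matrix coefficients of $\kappa^t(m)$ and compare them with this $t=0$ model: because $\mu_g^t=(1-t)\nu+t\mu_g$ has the large common part $(1-t)\nu$, one has $H^2(\mu^t_a,\mu^t_b)\to0$ and, for each fixed $h$ in the (finite) support of $m$, $\sum_{k\in G}H^2(\mu^t_{h^{-1}k},\mu^t_k)\to0$ as $t\to0$, using nonsingularity of $G\actson(X,\mu_1)$ and the behaviour of the Hellinger distance under the convex combination \eqref{eq:convex combo}. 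Hence the overlap products tend to $1$ and the twisting unitaries tend to the identity, so the relevant matrix coefficients of $\kappa^t(m)$ converge to those of $(\lambda\ot\id)(m)$, giving $\|\kappa^t(m)|_{\cH^t}\|<1$ for $t$ small. Combined with the reduction above, this shows that $G\actson(X,\mu_t)$ is strongly ergodic for every $t<t_0$.

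I expect the main obstacle to be this uniform spectral gap estimate, since it is exactly the regime in which the phase transition of Theorem \ref{thm:phase transition general} occurs: for large $t$ the Koopman representation is no longer tempered and strong ergodicity fails, so one must quantitatively exploit the common component $(1-t)\nu$ to keep all chaoses simultaneously weakly contained in $\lambda$ as $t\to0$. Under hypothesis (2), where $\nu$ is only absolutely continuous with respect to $\mu_e$, the additional bound $\sup_{g\in G}|\log d\mu_g/d\mu_e|<+\infty$ is precisely what is needed to retain the Hellinger estimate $\sum_kH^2(\mu^t_{h^{-1}k},\mu^t_k)\to0$. A more routine point is the reduction from almost invariant sets to almost invariant Koopman vectors, where the Radon--Nikodym factors appearing in $\kappa^t$ must be shown to be $L^2$-negligible on the support of $m$; this is unproblematic for small $t$.
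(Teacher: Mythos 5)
Your overall strategy---deduce strong ergodicity from a spectral gap of the Koopman representation $\kappa^t$ on $L^2(X,\mu_t)\ominus\C 1$, obtained by comparison with the $t=0$ model---does not work as stated, and it is not the route the paper takes. The basic structural problem is that for a genuinely nonsingular Bernoulli action the constant function $1$ is not $\kappa^t$-invariant: $\kappa^t_g 1=(dg^{-1}\mu_t/d\mu_t)^{1/2}=\prod_k (d\mu^t_{g^{-1}k}/d\mu^t_k)^{1/2}(x_k)$ is an infinite product of non-constant functions. Consequently $\cH^t=L^2(X,\mu_t)\ominus\C 1$ is not a subrepresentation, the chaos decomposition $\bigoplus_n\cH^t_n$ is not preserved by $\kappa^t_g$ (each Radon--Nikodym factor contributes a mean-zero part that maps the $n$-th chaos into higher ones), and your description of $\cH^t_1$ as a twisted, overlap-scaled copy of $\lambda\otimes\id$ only captures the diagonal block of the operator. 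So $\|\kappa^t(m)|_{\cH^t}\|$ is not the norm of a restriction of a representation, and convergence of matrix coefficients to those of $(\lambda\otimes\id)(m)$ would not yield an operator-norm bound even if the decomposition were invariant. The same issue undermines your reduction step: an almost invariant sequence of sets $A_n$ does not produce almost invariant vectors $1_{A_n}-\mu_t(A_n)$ for $\kappa^t$, because $\|\kappa^t_g 1_{A_n}-1_{A_n}\|_2^2$ contains the term $\int_{A_n\cap gA_n}\bigl|(dg^{-1}\mu_t/d\mu_t)^{1/2}-1\bigr|^2\,d\mu_t$, which is not controlled by $\mu_t(gA_n\triangle A_n)$; being ``far from dissipative'' gives no pointwise control of the cocycle. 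This difficulty is exactly why, for nonsingular actions, one cannot pass freely between almost invariant sets and almost invariant Koopman vectors.

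The paper's proof uses the Koopman representation only for two softer purposes, both exploiting the estimate you correctly identified, namely $\langle\rho^t_g 1,1\rangle=\prod_h(1-H^2(\mu^t_{gh},\mu^t_h))\rightarrow 1$ as $t\rightarrow 0$: first, to rule out amenability of the action $G\actson(X,\mu_t)$ for small $t$ via Nevo's theorem; second, to show $\rho^t\not\prec\rho^t\otimes\rho^0$ for small $t$, where $\rho^0$ is the reduced Koopman representation of a pmp Bernoulli action and hence has stable spectral gap. These two facts are precisely the hypotheses of \cite[Lemma 5.2]{MV20}, which is the tool that replaces your missing reduction: writing $\mu_t=\Psi_*(\mu_1\times\mu_0\times\lambda^G)$ via the coin-flip factor map $\Psi$, that lemma forces any almost invariant sequence $\Psi_*(f_n)$ for the product action to be asymptotically contained in the range of the conditional expectation $E$ onto the first factor, while a direct chaos computation (legitimate there, since $E\circ\Psi_*$ is an explicit tensor product operator rather than a Koopman operator) gives $\|(E\circ\Psi_*)|_{L^2(X,\mu_t)\ominus\C 1}\|\leq t^{1/2}<1$, a contradiction. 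To salvage your approach you would have to replace the Koopman spectral gap by these two weak-containment statements and supply the conditional-expectation contraction; as written, the proposal has a genuine gap at its core.
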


\begin{proof}[Proof of Theorem \ref{thm:phase transition general}]
Assume that $G\actson (X,\mu_1)=\prod_{g\in G}(X_0,\mu_g)$ is nonsingular. For every $t\in [0,1]$, we have that
\begin{align*}
\sum_{h\in G}H^2(\mu_h^t,\mu_{gh}^t)\leq t\sum_{h\in G}H^2(\mu_h, \mu_{gh}) \text{ for every } g\in G,
\end{align*}
so that $G\actson (X,\mu_t)$ is nonsingular for every $t\in [0,1]$. The rest of the proof we divide into two steps.

\vspace{.5cm}

{\bf Claim 1.} If $G\actson (X,\mu_t)$ is conservative, then $G\actson (X,\mu_s)$ is weakly mixing for every $s<t$.

\vspace{.5cm}

{\it Proof of claim 1.} Note that for every $g\in G$ we have that 
\begin{align*}
(\mu_g^s)^r&=(1-r)\nu+r\mu_g^s=(1-r)\nu+r(1-s)\nu+rs\mu_g=\mu_g^{sr},
\end{align*}
so that $(\mu_s)_r=\mu_{sr}$. Therefore it suffices to prove that $G\actson (X,\mu_s)$ is weakly mixing for every $s<1$, assuming that $G\actson (X,\mu_1)$ is conservative. 

The claim is trivially true for $s=0$. So assume that $G\actson (X,\mu_1)$ is conservative and fix $s\in (0,1)$. Let $G\actson (Y,\eta)$ be an ergodic pmp action. Define $Y_0=X_0\times X_0\times \{0,1\}$ and define the probability measures $\lambda$ on $\{0,1\}$ by $\lambda(0)=s$. Define the map $\theta\colon Y_0\rightarrow X_0$ by 
\begin{align}\label{eq:theta map}
\theta(x,x',j)=\begin{cases}x &\text{ if } j=0\\
x' &\text{ if } j=1\end{cases}.
\end{align}
Then for every $g\in G$ we have that $\theta_*(\mu_g\times \nu\times \lambda)=\mu_g^s$. Write $Z=\{0,1\}^G$ and equip $Z$ with the probability measure $\lambda^{G}$. We identify the Bernoulli action $G\actson Y_0^{G}$ with the diagonal action $G\actson X\times X\times Z$. By applying $\theta$ in each coordinate we obtain a $G$-equivariant factor map
\begin{align}\label{eq:factor map}
\Psi\colon X\times X\times Z\rightarrow X:\;\;\;\; \Psi(x,x',z)_h=\theta(x_h,x'_h,z_h).
\end{align}
Then the map $\id_Y\times \Psi\colon Y\times X\times X\times Z\rightarrow Y\times X$ is $G$-equivariant and we have that $(\id_Y\times \Psi)_*(\eta\times \mu_1\times \mu_0\times\lambda^G)=\eta\times \mu_s$. The construction above is similar to \cite[Section 4]{KS20}.

 Take $F\in L^{\infty}(Y\times X,\eta\times \mu_s)^{G}$. Note that the diagonal action $G\actson (Y\times X,\eta\times \mu_1)$ is conservative, since $G\actson (Y,\eta)$ is pmp. The action $G\actson (X\times Z,\mu_0\times \lambda^{G})$ can be identified with a pmp Bernoulli action with base space $(X_0\times \{0,1\},\nu\times \lambda)$, so that it is mixing. By \cite[Theorem 2.3]{SW81} we have that
\begin{align*}
L^{\infty}(Y\times X\times X\times Z,\eta\times \mu_1\times \mu_0\times \lambda^{G})^{G}=L^{\infty}(Y\times X,\eta\times \mu_1)^{G}\ovt 1\ovt 1,
\end{align*}
which implies that the assignment $(y,x,x',z)\mapsto F(y, \Psi(x,x',z))$ is essentially independent of the $x'$- and $z$-variable. Choosing a finite set of coordinates $\mathcal{F}\subset G$ and changing, for $g\in \mathcal{F}$, the value $z_g$ between $0$ and $1$, we see that $F$ is essentially independent of the $x_g$-coordinates for $g\in \mathcal{F}$. As this is true for any finite set $\mathcal{F}\subset G$, we have that $F\in L^{\infty}(Y)^{G}\ovt 1$. The action $G\actson (Y,\eta)$ is ergodic and therefore $F$ is essentially constant. We conclude that $G\actson (X,\mu_s)$ is weakly mixing.

\vspace{.5cm} 

{\bf Claim 2.} If $\nu\sim \mu_e$ and if $G\actson (X,\mu_t)$ is not dissipative, then $G\actson (X,\mu_s)$ is conservative for every $s<t$. 

\vspace{.5cm}

{\it Proof of claim 2.} Again it suffices to assume that $G\actson (X,\mu_1)$ is not dissipative and to show that $G\actson (X,\mu_s)$ is conservative for every $s<1$. 

When $s=0$, the statement is trivial, so assume that $G\actson (X,\mu_1)$ is not dissipative and fix $s\in (0,1)$. Let $C\subset X$ denote the nonnegligible conservative part of $G\actson (X,\mu_1)$. As in the proof of claim 1, write $Z=\{0,1\}^{G}$ and let $\lambda$ be the probability measure on $\{0,1\}$ given by $\lambda(0)=s$. Writing $\Psi\colon X\times X\times Z\rightarrow X$ for the $G$-equivariant map \eqref{eq:factor map}. We claim that $\Psi_*((\mu_1\times \mu_0\times \lambda^{G})\big|_{C\times X\times Z})\sim \mu_s$, so that $G\actson (X,\mu_s)$ is a factor of a conservative nonsingular action, and therefore must be conservative itself. 

As $\Psi_*(\mu_1\times \mu_0\times \lambda^{G})=\mu_s$, we have that $\Psi_*((\mu_1\times \mu_0\times \lambda^{G})\big|_{C\times X\times Z})\prec \mu_s$. Let $\mathcal{U}\subset X$ be the Borel set, uniquely determined up to a set of measure zero, such that $\Psi_*((\mu_1\times \mu_0\times \lambda^{G})\big|_{C\times X\times Z})\sim \mu_s\big|_{\mathcal{U}}$. We have to show that $\mu_s(X\setminus \mathcal{U})=0$. Fix a finite subset $\mathcal{F}\subset G$. For every $t\in [0,1]$ define 
\begin{align*}
(X_1,\gamma_1^{t})&=\prod_{g\in \mathcal{F}}(X_0,(1-t)\nu+t\mu_g),\\ 
(X_2,\gamma_2^{t})&=\prod_{g\in G \setminus \mathcal{F}}(X_0,(1-t)\nu+t\mu_g).
\end{align*}
We shall write $\gamma_1=\gamma_1^1, \gamma_2=\gamma_2^1$. Also define 
\begin{align*}
(Y_1,\zeta_1)&=\prod_{g\in \mathcal{F}}(X_0\times X_0\times \{0,1\},\mu_g\times \nu\times \lambda),\\
(Y_2,\zeta_2)&=\prod_{g\in G\setminus \mathcal{F}}(X_0\times X_0\times \{0,1\},\mu_g\times \nu\times \lambda).
\end{align*}
By applying the map \eqref{eq:theta map} in every coordinate, we get factor maps $\Psi_j\colon Y_j\rightarrow X_j$ that satisfy $(\Psi_j)_*(\zeta_j)=\gamma_j^{s}$ for $j=1,2$. Identify $X_1\times Y_2\cong X\times (X_0\times \{0,1\})^{G\setminus \mathcal{F}}$ and define the subset $C'\subset X_1\times Y_2$ by $C'=C\times (X_0\times \{0,1\})^{G\setminus \mathcal{F}}$. Let $\mathcal{U}'\subset X$ be Borel such that 
\begin{align*}
(\id_{X_1}\times \Psi_2)_*((\gamma_1\times \zeta_2)\big|_{C'})\sim (\gamma_1\times \gamma_2^{s})\big|_{\mathcal{U}'}.
\end{align*}
Identify $Y_1\times X_2\cong X\times (X_0\times \{0,1\})^{\mathcal{F}}$ and define $V\subset Y_1\times X_2$ by $V=\mathcal{U}'\times (X_0\times \{0,1\})^{\mathcal{F}}$. Then we have that
\begin{align*}
(\Psi_1\times \id_{X_2})_*((\zeta_1\times \gamma_2^s)\big|_{V})&\sim (\Psi_1\times \id_{X_2})_*(\id_{Y_1}\times \Psi_2)_*((\gamma_1\times \zeta_1)\big|_{C'}\times \nu^{\mathcal{F}}\times \lambda^{\mathcal{F}})\\
&=\Psi_*((\zeta_1\times \zeta_2)\big|_{C\times X\times Z})\sim \mu_s\big|_{\mathcal{U}}.
\end{align*}
Let $\pi\colon X_1\times X_2\rightarrow X_2$ and $\pi'\colon Y_1\times X_2\rightarrow X_2$ denote the coordinate projections. Note that by construction we have that
\begin{align} \label{eq:equivalence chain}
\pi'_*((\zeta_1\times \gamma_2^s)\big|_V) \sim \pi_*((\gamma_1\times \gamma_2^{s})\big|_{\mathcal{U}'})\sim \pi_*(\mu_s\big|_{\mathcal{U}}).
\end{align}
Let $W\subset X_2$ be Borel such that $\pi_*(\mu_s\big|_{\mathcal{U}})\sim \gamma_2^s\big|_{W}$. For every $y\in X_2$ define the Borel sets
\begin{align*}
\mathcal{U}_y=\{x\in X_1:(x,y)\in \mathcal{U}\}\;\;\;\; \text{ and } \;\;\;\;\mathcal{U}'_y=\{x\in X_1:(x,y)\in \mathcal{U}'\}.
\end{align*}
As $\pi_*((\gamma_1\times \gamma_2^s)\big|_{\mathcal{U}'})\sim \gamma_2^s\big|_{W}$, we have that 
\begin{align*}
\gamma_1(\mathcal{U}'_y)>0 \text{ for } \gamma_2^s-\text{a.e. } y\in W.
\end{align*}
The disintegration of $(\gamma_1\times \gamma_2^s)\big|_{\mathcal{U}'}$ along $\pi$ is given by $(\gamma_1\big|_{\mathcal{U}'_y})_{y\in W}$. Therefore the disintegration of $(\zeta_1\times \gamma_2^s)\big|_{V}$ along $\pi'$ is given by $(\gamma_1\big|_{\mathcal{U}'_y}\times \nu^{\mathcal{F}}\times \lambda^{\mathcal{F}})_{y\in W}$. We conclude that the disintegration of $(\Psi_1\times \id_{X_2})_*((\zeta_1\times \gamma_2^s)\big|_V)$ along $\pi$ is given by $((\Psi_1)_*(\gamma_1\big|_{\mathcal{U}'_y}\times \nu^{\mathcal{F}}\times \lambda^{\mathcal{F}}))_{y\in W}$. The disintegration of $\mu_s\big|_{\mathcal{U}}$ along $\pi$ is given by $(\gamma_2^s\big|_{\mathcal{U}_y})_{y\in W}$. Since $\mu_s\big|_{\mathcal{U}}\sim(\Psi_1\times \id_{X_2})_*((\zeta_1\times \gamma_2^s)\big|_V)$, we conclude that
\begin{align*}
(\Psi_1)_*(\gamma_1\big|_{\mathcal{U}'_y}\times \nu^{\mathcal{F}}\times \lambda^{\mathcal{F}})\sim \gamma_1^s\big|_{\mathcal{U}_y} \text{ for } \gamma_2^s-\text{a.e. } y\in W.
\end{align*} 
As $\gamma_1(\mathcal{U}'_y)>0$ for $\gamma_2^s$ - a.e. $y\in W$, and using that $\nu\sim \mu_e$, we see that
\begin{align*}
\gamma_1^{s}\sim \nu^\mathcal{F}&\sim(\Psi_1)_*((\gamma_1\times \nu^{\mathcal{F}}\times \lambda^{\mathcal{F}})\big|_{\mathcal{U}'_y\times X_0^{\mathcal{F}}\times \{1\}^{\mathcal{F}}})\\
&\prec (\Psi_1)_*(\gamma_1\big|_{\mathcal{U}'_y}\times \nu^{\mathcal{F}}\times \lambda^{\mathcal{F}}).
\end{align*}
for $\gamma_2^{s}$-a.e. $y\in W$. It is clear that also $(\Psi_1)_*(\gamma_1\big|_{\mathcal{U}'_y}\times \nu^{\mathcal{F}}\times \lambda^{\mathcal{F}})\prec \gamma_1^{s}$, so that $\gamma_1^{s}\big|_{\mathcal{U}_y}\sim \gamma_1^{s}$ for $\gamma_2^s$-a.e. $y\in W$. Therefore we have that $\gamma_1^s(X_1\setminus \mathcal{U}_y)=0$ for $\gamma_2^s$-a.e. $y\in W$, so that
\begin{align*}
\mu_s(\mathcal{U}\triangle (X_0^{{\mathcal{F}}}\times W))=0.
\end{align*}
Since this is true for every finite subset $\mathcal{F}\subset G$, we conclude that $\mu_s(X\setminus \mathcal{U})=0$. 

 The conclusion of the proof now follows by combining both claims. Assume that $G\actson (X,\mu_t)$ is not dissipative and fix $s<t$. Choose $r$ such that $s<r<t$. 

$\nu\sim \mu_e$: By claim 2 we have that $G\actson (X,\mu_r)$ is conservative. Then by claim 1 we see that $G\actson (X,\mu_s)$ is weakly mixing. 

$\nu\prec \mu_e$: As $\nu\prec \mu_e$, the measures $\mu_e^{t}$ and $\mu_e$ are equivalent. We have that 
\begin{align*}
\frac{d\mu_g^{t}}{d\mu_e^{t}}=\left((1-t)\frac{d\nu}{d\mu_e}+t\frac{d\mu_g}{d\mu_e}\right)\frac{d\mu_e}{d\mu_e^{t}}.
\end{align*}
So if $\sup_{g\in G}|\log d\mu_g/d\mu_e(x)|<+\infty$ for a.e $x\in X_0$, we also have that $$\sup_{g\in G}|\log d\mu_g^{t}/d\mu_e^{t}(x)|<+\infty \text{ for a.e. }x\in X_0.$$
It follows from \cite[Proposition 4.3]{BV20} that $G\actson (X,\mu_t)$ is conservative. Then by claim 1 we have that $G\actson (X,\mu_s)$ is weakly mixing. 
\end{proof}

\begin{remark}\label{rem:lcsc phase transition}
Let $I$ be a countable infinite set and suppose that we are given a family of equivalent probability measures $(\mu_i)_{i\in I}$ on a standard Borel space $X_0$. Let $\nu$ be a probability measure on $X_0$ that is equivalent with all the $\mu_i$. If $G$ is a locally compact second countable group that acts on $I$ such that for each $i \in I$ the stabilizer subgroup $G_i=\{g\in G:g\cdot i=i\}$ is compact, then the pmp generalized Bernoulli action 
\begin{align*}
G\actson \prod_{i\in I}(X_0,\nu), \;\;\;\; (g\cdot x)_{i}=x_{g^{-1}\cdot i}
\end{align*}
is mixing. For $t\in [0,1]$ write 
\begin{align*}
(X,\mu_t)=\prod_{i \in I}(X_0,(1-t)\nu+t\mu_i)
\end{align*}
and assume that the generalized Bernoulli action $G\actson (X,\mu_1)$ is nonsingular.

Since \cite[Theorem 2.3]{SW81} still applies to infinitely recurrent actions of lcsc groups (see \cite[Remark 7.4]{AIM19}), it is straightforward to adapt the proof of claim 1 in the proof of Theorem \ref{thm:phase transition general} to prove that if $G\actson (X,\mu_t)$ is infinitely recurrent, then $G\actson (X,\mu_s)$ is weakly mixing for every $s<t$. Similarly we can adapt the proof of claim 2, using that a factor of an infinitely recurrent action is again infinitely recurrent. Together, this leads to the following phase transition result in the lcsc setting:

Assume that $G_i=\{g\in G:g\cdot i=i\}$ is compact for every $i\in I$ and that $\nu\sim \mu_e$. Then there exists a $t_1\in [0,1]$ such that $G\actson (X,\mu_t)$ is dissipative up to compact stabilizers for every $t>t_1$ and weakly mixing for every $t<t_1$. 
\end{remark}

Recall the following definition from \cite[Definition 4.2]{BKV19}. When $G$ is a countable infinite group and $G\actson (X,\mu)$ is a nonsingular action on a standard probability space, a sequence $(\eta_n)$ of probability measures on $G$ is called \emph{strongly recurrent} for the action $G\actson (X,\mu)$ if 
\begin{align*}
\sum_{h\in G}\eta_n^2(h)\int_{X}\frac{d\mu(x)}{\sum_{k\in G}\eta_n(hk^{-1})\frac{dk^{-1}\mu}{d\mu}(x)}\xrightarrow{n\rightarrow +\infty} 0.
\end{align*}
We say that $G\actson (X,\mu)$ is \emph{strongly conservative} there exists a sequence $(\eta_n)$ of probability measures on $G$ that is strongly recurrent for $G\actson (X,\mu)$. 

\begin{lemma}\label{lem:strongly recurrent}
Let $G\actson (X,\mu)$ and $G\actson (Y,\nu)$ be nonsingular actions of a countable infinite group $G$ on standard probability spaces $(X,\mu)$ and $(Y,\nu)$. Suppose that $\psi\colon (X,\mu)\rightarrow (Y,\nu)$ is a measure preserving $G$-equivariant factor map and that $\eta_n$ is a sequence of probability measures on $G$ that is strongly recurrent for the action $G\actson (X,\mu)$. Then $\eta_n$ is strongly recurrent for the action $G\actson (Y,\nu)$. 
\end{lemma}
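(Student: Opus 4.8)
The plan is to reduce the statement to a termwise comparison of the integrands appearing in the definition of strong recurrence. Concretely, I would fix $n$ and $h\in G$ and prove the single inequality
\[
\int_Y \frac{d\nu(y)}{\sum_{k\in G}\eta_n(hk^{-1})\frac{dk^{-1}\nu}{d\nu}(y)}\;\le\;\int_X \frac{d\mu(x)}{\sum_{k\in G}\eta_n(hk^{-1})\frac{dk^{-1}\mu}{d\mu}(x)}.
\]
Multiplying by $\eta_n^2(h)$, summing over $h\in G$, and using that both defining quantities are sums of nonnegative terms, this immediately gives that the quantity for $G\actson(Y,\nu)$ is dominated by the one for $G\actson(X,\mu)$, so that strong recurrence of $(\eta_n)$ for $G\actson(X,\mu)$ forces it for $G\actson(Y,\nu)$.

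The key ingredient is a compatibility between the Radon--Nikodym cocycles on $X$ and $Y$. Let $\cA=\psi^{-1}(\mathcal{B}(Y))\subset \mathcal{B}(X)$ and let $E=E_\mu[\,\cdot\mid \cA]$ be the associated conditional expectation. Since $\psi$ is measure preserving, the $\cA$-measurable integrable functions are exactly those of the form $f\circ\psi$ with $f\in L^1(Y,\nu)$, and $\int_X (f\circ\psi)\,d\mu=\int_Y f\,d\nu$. I claim that for every $k\in G$ one has $E\big(\frac{dk^{-1}\mu}{d\mu}\big)=\frac{dk^{-1}\nu}{d\nu}\circ\psi$ $\mu$-a.e. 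To verify this it suffices to test against sets $\psi^{-1}(B)$ with $B\subset Y$ Borel: using $G$-equivariance and measure preservation of $\psi$ we get $\psi_*(k^{-1}\mu)=k^{-1}(\psi_*\mu)=k^{-1}\nu$, whence
\[
\int_{\psi^{-1}(B)}\frac{dk^{-1}\mu}{d\mu}\,d\mu=(k^{-1}\mu)(\psi^{-1}(B))=(k^{-1}\nu)(B)=\int_{\psi^{-1}(B)}\Big(\frac{dk^{-1}\nu}{d\nu}\circ\psi\Big)\,d\mu.
\]

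Write $S_h^X=\sum_{k}\eta_n(hk^{-1})\frac{dk^{-1}\mu}{d\mu}$ and $S_h^Y=\sum_{k}\eta_n(hk^{-1})\frac{dk^{-1}\nu}{d\nu}$. Since $\int_X \frac{dk^{-1}\mu}{d\mu}\,d\mu=1$ and $\sum_k\eta_n(hk^{-1})=1$, the function $S_h^X$ lies in $L^1(X,\mu)$; linearity together with monotone convergence for conditional expectations (the summands being nonnegative) then gives $E(S_h^X)=S_h^Y\circ\psi$. As $\eta_n$ is a probability measure and the cocycles are a.e.\ strictly positive, $S_h^X>0$ and $S_h^Y>0$ almost everywhere. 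Applying Jensen's inequality for $E$ with the convex function $t\mapsto 1/t$ on $(0,\infty)$ yields
\[
\frac{1}{S_h^Y\circ\psi}=\frac{1}{E(S_h^X)}\le E\Big(\frac{1}{S_h^X}\Big)\quad \mu\text{-a.e.},
\]
and integrating over $(X,\mu)$ while using $\int_X E(1/S_h^X)\,d\mu=\int_X 1/S_h^X\,d\mu$ and $\int_X (1/S_h^Y)\circ\psi\,d\mu=\int_Y 1/S_h^Y\,d\nu$ gives exactly the termwise estimate above.

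The step I expect to require the most care is the conditional-expectation identity for the Radon--Nikodym cocycle: establishing that $G$-equivariance together with $\psi_*\mu=\nu$ is precisely what makes $\frac{dk^{-1}\nu}{d\nu}\circ\psi$ the fibrewise average of $\frac{dk^{-1}\mu}{d\mu}$. Once this is in place, the inequality $1/E(Z)\le E(1/Z)$ does the remaining work, and the passage from the fixed-$h$ estimate to the full defining sum is merely nonnegativity of the terms together with $\eta_n$ being a probability measure; the interchange of $E$ with the infinite sum defining $S_h^X$ is legitimate by nonnegativity of the summands.
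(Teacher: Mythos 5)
Your proposal is correct and follows essentially the same route as the paper: identify $\frac{dk^{-1}\nu}{d\nu}$ as the conditional expectation of $\frac{dk^{-1}\mu}{d\mu}$ along $\psi$, pass the conditional expectation through the sum defining the denominator, apply Jensen's inequality for the convex function $t\mapsto 1/t$, and integrate to obtain the termwise domination that is then summed over $h$. The only difference is presentational (you work with the $\sigma$-algebra $\psi^{-1}(\mathcal{B}(Y))$ on $X$ and verify the cocycle identity by testing against sets $\psi^{-1}(B)$, while the paper states it directly via $\frac{d\psi_*(k^{-1}\mu)}{d\psi_*\mu}$), which is immaterial.
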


\begin{proof}
Let $E\colon L^0(X,[0,+\infty))\rightarrow L^0(Y,[0,+\infty))$ denote the conditional expectation map that is uniquely determined by 
\begin{align*}
\int_Y E(F) H d\nu=\int_X F (H\circ \psi) d\mu
\end{align*}
for all positive measurable functions $F\colon X\rightarrow [0,+\infty)$ and $H\colon Y\rightarrow [0,+\infty)$. Since 
\begin{align*}
\frac{dk^{-1}\nu}{d\nu}=\frac{d\psi_*(k^{-1}\mu)}{d\psi_*\mu}=E\left(\frac{dk^{-1}\mu}{d\mu}\right)
\end{align*}
for every $k\in G$, we have that
\begin{align}\label{eq:conditional sum}
\sum_{k\in G}\eta_n(hk^{-1})\frac{dk^{-1}\nu}{d\nu}(y)=E\left(\sum_{k\in G}\eta_n(hk^{-1})\frac{dk^{-1}\mu}{d\mu}\right)(y) \text{ for a.e. } y\in Y.
\end{align}
By Jensen's inequality for conditional expectations, applied to the convex function $t\mapsto 1/t$, we also have that
\begin{align}\label{eq:Jensen}
\frac{1}{E\left(\sum_{k\in G}\eta_n(hk^{-1})\frac{dk^{-1}\mu}{d\mu}\right)(y)}\leq  E\left(\frac{1}{\sum_{k\in G}\eta_n(hk^{-1})\frac{dk^{-1}\mu}{d\mu}}\right)(y) \text{ for a.e. } y\in Y.
\end{align}
Combining \eqref{eq:conditional sum} and \eqref{eq:Jensen} we see that 
\begin{align*}
\sum_{h\in G}\eta_n^2(h)\int_{Y}\frac{d\nu(y)}{\sum_{k\in G}\eta_n(hk^{-1})\frac{dk^{-1}\nu}{d\nu}(y)}&\leq \sum_{h\in G}\eta_n^2(h)\int_Y E\left(\frac{1}{\sum_{k\in G}\eta_n(hk^{-1})\frac{dk^{-1}\mu}{d\mu}}\right)(y)d\nu(y)\\
&=\sum_{h\in G}\eta_n^2(h)\int_X\frac{d\mu(x)}{\sum_{k\in G}\eta_n(hk^{-1})\frac{dk^{-1}\mu}{d\mu}(x)},
\end{align*}
which converges to $0$ as $\eta_n$ is strongly recurrent for $G\actson (X,\mu)$.
\end{proof}

We say that a nonsingular group action $G\actson (X,\mu)$ has an \emph{invariant mean} if there exists a $G$-invariant linear functional $\vphi\in L^{\infty}(X)^*$. We say that $G\actson (X,\mu)$ is \emph{amenable (in the sense of Zimmer)} if there exists a $G$-equivariant conditional expectation $E\colon L^{\infty}(G\times X)\rightarrow L^{\infty}(X)$, where the action $G\actson G\times X$ is given by $g\cdot (h,x)=(gh,g\cdot x)$. 

\begin{proposition}\label{prop:invariant mean}
Let $G$ be a countable infinite group and let $(\mu_g)_{g\in G }$ be a family of equivalent probability measures on a standard Borel space $X_0$, that is not supported on a single atom. Let $\nu$ be a probability measure on $X_0$ and for each $t\in [0,1]$ consider the Bernoulli action \eqref{eq:G actson (X,mu_t)}. Assume that $G\actson (X,\mu_1)$ is nonsingular. 
\begin{enumerate}
\item If $G\actson (X,\mu_t)$ has an invariant mean, then $G\actson (X,\mu_s)$ has an invariant mean for every $s<t$.
\item If $G\actson (X,\mu_t)$ is amenable, then $G\actson (X,\mu_s)$ is amenable for every $s>t$.
\item If $G\actson (X,\mu_t)$ is strongly conservative, then $G\actson (X,\mu_s)$ is strongly conservative for every $s<t$.
\end{enumerate}
\end{proposition}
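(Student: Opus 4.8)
The three statements share a common mechanism: in each case I realize one of the two actions as a measure-preserving $G$-equivariant factor of a product in which a controllable tensor factor carries the relevant property and the remaining factors are probability measure preserving (pmp). The building block is exactly the factor map $\Psi$ constructed in the proof of Theorem \ref{thm:phase transition general}. For $s<t$, since $(\mu_t)_{s/t}=\mu_s$, applying that construction with base family $(\mu_g^t)_{g\in G}$ and parameter $s/t$ produces a measure-preserving $G$-equivariant factor map
\[
\Psi\colon \Big(X\times X\times Z,\ \mu_t\times \nu^G\times \lambda^G\Big)\longrightarrow (X,\mu_s),
\]
where $Z=\{0,1\}^G$, $\lambda(0)=s/t$, the first tensor factor is $G\actson(X,\mu_t)$, and the last two factors are mixing pmp Bernoulli actions. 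This handles (1) and (3). For (2), where $s>t$, I would instead use the identity $\mu_g^s=\tfrac{1-s}{1-t}\,\mu_g^t+\tfrac{s-t}{1-t}\,\mu_g$ to build, by the same coordinatewise randomization, a measure-preserving $G$-equivariant factor map $\Psi'$ exhibiting $(X,\mu_s)$ as a factor of $G\actson(X\times X\times Z',\mu_t\times\mu_1\times(\lambda')^G)$, with $\lambda'(0)=\tfrac{1-s}{1-t}$, whose first two tensor factors are $(X,\mu_t)$ and $(X,\mu_1)$ and whose last factor is pmp.

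For (1), I would first note that if $G\actson(A,\alpha)$ carries an invariant mean $m$ and $G\actson(B,\beta)$ is pmp, then $F\mapsto m\big(a\mapsto\int_B F(a,b)\,d\beta(b)\big)$ is an invariant mean on $L^\infty(A\times B)$; here $G$-invariance follows from invariance of $m$ together with invariance of $\beta$. Since invariant means also pass to factors through the pullback $\psi^\ast\colon L^\infty(Y)\to L^\infty(X)$ of a nonsingular factor map, composing the two observations with $\Psi$ gives: $(X,\mu_t)$ has an invariant mean $\Rightarrow$ the product $G\actson(X\times X\times Z,\mu_t\times\nu^G\times\lambda^G)$ has one $\Rightarrow$ its factor $(X,\mu_s)$ has one.

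For (3), the point is that $\Psi$ is measure preserving, so Lemma \ref{lem:strongly recurrent} applies: it suffices to show that the product $G\actson(X\times X\times Z,\mu_t\times\nu^G\times\lambda^G)$ is strongly conservative whenever $G\actson(X,\mu_t)$ is. Because $\nu^G\times\lambda^G$ is $G$-invariant, the Radon--Nikodym cocycle of the product equals $\tfrac{dk^{-1}\mu_t}{d\mu_t}(x)$, independent of the pmp coordinates; hence the strong-recurrence expression for the product with any $(\eta_n)$ collapses, after integrating out the probability measure $\nu^G\times\lambda^G$, to the one for $G\actson(X,\mu_t)$. Thus a strongly recurrent sequence for $(X,\mu_t)$ is strongly recurrent for the product, and Lemma \ref{lem:strongly recurrent} transports it to $(X,\mu_s)$.

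For (2) I would invoke two standard stability properties of amenability in the sense of Zimmer: the product $G\actson A\times B$ of an amenable action $A$ with an arbitrary nonsingular action $B$ is amenable (pull back a $G$-equivariant conditional expectation $E\colon L^\infty(G\times A)\to L^\infty(A)$ to $E\ovt \id_{L^\infty(B)}$ and check equivariance), and measure-preserving factors of amenable actions are amenable. Applying the first to $A=(X,\mu_t)$ and $B=(X,\mu_1)\times Z'$ shows that $G\actson(X\times X\times Z',\mu_t\times\mu_1\times(\lambda')^G)$ is amenable, and applying the second to $\Psi'$ yields amenability of $(X,\mu_s)$. The one genuinely delicate point is the \emph{direction} of monotonicity: invariant means and strong conservativeness descend to the smaller parameter because $(X,\mu_s)$ (for $s<t$) is literally a factor of a product built over $(X,\mu_t)$, whereas amenability must propagate to the \emph{larger} parameter, which forces the reversed construction $\Psi'$ in which the extreme measure $\mu_1$ is used to push $(X,\mu_s)$ (for $s>t$) out as the factor. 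Getting this bookkeeping right, and using that amenability is \emph{absorbing} under products rather than merely factor-stable, is where the care is needed.
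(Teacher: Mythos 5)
Parts (1) and (3) of your argument are correct and coincide with the paper's proof: in both cases $(X,\mu_s)$ is realized as a measure-preserving $G$-equivariant factor of $(X,\mu_t)\times(\text{pmp})$, the invariant mean is extended to the product by integrating out the pmp coordinates and then restricted to the factor, and for strong conservativeness the Radon--Nikodym cocycle of the product collapses to that of $(X,\mu_t)$ so that Lemma \ref{lem:strongly recurrent} applies. (The paper normalizes to $t=1$ via $(\mu_t)_{s/t}=\mu_s$, but that is cosmetic.)

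Part (2) has a genuine gap: the principle ``measure-preserving factors of amenable actions are amenable'' is false. For a counterexample, take $G$ nonamenable, $A$ any amenable nonsingular action of $G$ on a standard probability space (e.g.\ a Poisson boundary) and $Y=(\{0,1\}^G,\lambda^G)$ the pmp Bernoulli action; then $G\actson A\times Y$ is amenable, the coordinate projection onto $Y$ is measure preserving and $G$-equivariant, yet $G\actson Y$ is pmp and hence not amenable. Amenability descends along a factor map $\pi\colon(A,\alpha)\to(B,\beta)$ only when the conditional expectation $E\colon L^\infty(A)\to L^\infty(B)$ is $G$-equivariant, i.e.\ when $\pi$ is \emph{relatively} measure preserving ($g_*\alpha_w=\alpha_{g\cdot w}$ for the disintegration), so that one may compose $L^\infty(G\times B)\hookrightarrow L^\infty(G\times A)\to L^\infty(A)\to L^\infty(B)$. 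Your map $\Psi'$ is not relatively measure preserving: the conditional law of $(x_h,y_h,z_h)$ given $\Psi'(x,y,z)_h=w_h$ involves $d\mu_h^t/d\mu_h^s$ and $d\mu_h/d\mu_h^s$, which depend on the coordinate $h$, so the fiber measures are not intertwined by the permutation action. The paper's proof of (2) is arranged precisely to avoid this: after reducing to $s=1$ by reparametrization, it realizes the \emph{amenable} action $(X,\mu_t)$ as a factor of $(X\times X\times Z,\mu_1\times\mu_0\times\lambda^G)$, invokes Zimmer's theorem that amenability passes \emph{up} to extensions to conclude the product is amenable, and then descends to $(X,\mu_1)$ along the coordinate projection, which \emph{is} relatively measure preserving because the complementary coordinates carry a $G$-invariant probability measure. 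Your remaining observations (the identity $\mu_g^s=\tfrac{1-s}{1-t}\mu_g^t+\tfrac{s-t}{1-t}\mu_g$, and that products of an amenable action with an arbitrary nonsingular action are amenable) are correct but do not close this gap.
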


\begin{proof}
1. We may assume that $t=1$. So suppose that $G\actson (X,\mu_1)$ has an invariant mean and fix $s<1$. Let $\lambda$ be the probability measure on $\{0,1\}$ that is given by $\lambda(0)=s$. Then, by \cite[Proposition A.9]{AIM19} the diagonal action $G\actson (X\times X\times \{0,1\}^{G},\mu_1\times \mu_0\times \lambda^{G})$ has an invariant mean. Since $G\actson (X,\mu_s)$ is a factor of this diagonal action, it admits a $G$-invariant mean as well.

2. It suffices to show that $G\actson (X,\mu_1)$ is amenable whenever there exists a $t\in (0,1)$ such that $G\actson (X,\mu_t)$ is amenable. Write $\lambda$ for the probability measure on $\{0,1\}$ given by $\lambda(0)=t$. Then $G\actson (X,\mu_t)$ is a factor of the diagonal action $G\actson (X\times X\times \{0,1\}^{G},\mu_1\times \mu_0\times \lambda^{G})$, so by \cite[Theorem 2.4]{Zim78} also the latter action is amenable. Since $G\actson (X\times \{0,1\}^{G},\mu_0\times \lambda^{G})$ is pmp, we have that $G\actson (X,\mu_1)$ is amenable.

3. We may again assume that $t=1$. Suppose that $(\eta_n)$ is a strongly recurrent sequence of probability measures on $G$ for the action $G\actson (X,\mu_1)$.  Fix $s<1$ and let $\lambda$ be the probability measure on $\{0,1\}$ defined by $\lambda(0)=s$. As the diagonal action $G\actson (X\times \{0,1\}^{G},\mu_0\times \lambda^{G})$ is pmp, the sequence $\eta_n$ is also strongly recurrent for the diagonal action $G\actson (X\times X\times \{0,1\},\mu_1\times \mu_0\times \lambda^{G})$. Since $G\actson (X,\mu_t)$ is a factor of $G\actson (X\times X\times \{0,1\}^{G},\mu_1\times \mu_0\times \lambda^{G})$, it follows from Lemma \ref{lem:strongly recurrent} that the sequence $\eta_n$ is strongly recurrent for $G\actson (X,\mu_t)$.
\end{proof}

We finally prove Theorem \ref{thm:strong ergodicity general}. The proof relies heavily upon the techniques developed in \cite[Section 5]{MV20}.

\begin{proof}[Proof of Theorem \ref{thm:strong ergodicity general}]
For every $t\in (0,1]$ write $\rho^t$ for the Koopman representation
\begin{align*}
\rho^t\colon G\actson L^2(X,\mu_t):\;\;\;\; (\rho^t_g(\xi))(x)=\left(\frac{dg\mu_t}{d\mu_t}(x)\right)^{1/2}\xi(g^{-1}\cdot x).
\end{align*}
Fix $s\in (0,1)$ and let $C>0$ be such that $\log(1-x)\geq -C x$ for every $x\in [0,s)$. Then for every $t<s$ and every $g\in G$ we have that 
\begin{align*}
\log(\langle \rho^t_g(1), 1\rangle)&=\sum_{h\in G}\log(1-H^2(\mu_{gh}^{t},\mu_h^{t}))\\
&\geq \sum_{h\in G}\log(1-tH^2(\mu_{gh},\mu_h))\\
&\geq -C t\sum_{h\in G}H^2(\mu_{gh},\mu_h).
\end{align*}
Because $G\actson (X,\mu_1)$ is nonsingular we get that 
\begin{align}\label{eq:almost invariant rep}
\langle \rho^t_g(1),1\rangle\rightarrow 1 \text{ as } t\rightarrow 0, \text{ for every } g\in G.
\end{align}
We claim that there exists a $t'>0$ such that $G\actson (X,\mu_t)$ is nonamenable for every $t<t'$. Suppose, on the contrary, that $t_n$ is a sequence that converges to zero such that $G\actson (X,\mu_{t_n})$ is amenable for every $n\in \mathbb{N}$. Then it follows from \cite[Theorem 3.7]{Nev03} that $\rho^{t_n}$ is weakly contained in the left regular representation $\lambda_G$ for every $n\in \mathbb{N}$. Write $1_G$ for the trivial representation of $G$. It follows from \eqref{eq:almost invariant rep} that $\bigoplus_{n\in \mathbb{N}}\rho^{t_n}$ has almost invariant vectors, so that
\begin{align*}
1_G\prec \bigoplus_{n\in \mathbb{N}}\rho^{t_n}\prec \infty \lambda_G\prec \lambda_G,
\end{align*}
which is in contradiction with the nonamenability of $G$. By Theorem \ref{thm:phase transition general} there exists a $t_1\in [0,1]$ such that $G\actson (X,\mu_t)$ is weakly mixing for every $t<t_1$. Since every dissipative action is amenable (see for example \cite[Theorem A.29]{AIM19}) it follows that $t_1\geq t'>0$.

Write $Z_0=[0,1)$ and let $\lambda$ denote the Lebesgue probability measure on $Z_0$. Let $\rho^0$ denote the reduced Koopman representation
\begin{align*}
\rho^0\colon G\actson L^2(X\times Z_0^{G},\mu_0\times \lambda^G)\ominus \C 1: \;\;\;\; (\rho^0_g(\xi))(x)=\xi(g^{-1}\cdot x).
\end{align*}
As $G$ is nonamenable, $\rho^{0}$ has stable spectral gap. Suppose that for every $s>0$ we can find $0<s'<s$ such that $\rho^{s'}$ is weakly contained in $\rho^{s'}\otimes \rho^{0}$. Then there exists a sequence $s_n$ that converges to zero, such that $\rho^{s_n}$ is weakly contained in $\rho^{s_n}\otimes \rho^{0}$ for every $n\in \mathbb{N}$. This implies that $\bigoplus_{n\in \mathbb{N}}\rho^{s_n}$ is weakly contained in $(\bigoplus_{n\in \mathbb{N}}\rho^{s_n})\otimes \rho^{0}$. But by \eqref{eq:almost invariant rep}, the representation $\bigoplus_{n\in \mathbb{N}}\rho^{s_n}$ has almost invariant vectors, so that $(\bigoplus_{n\in \mathbb{N}}\rho^{s_n})\otimes \rho^{0}$ weakly contains the trivial representation. This is in contradiction with $\rho^{0}$ having stable spectral gap. We conclude that there exists an $s>0$ such that $\rho^t$ is not weakly contained in $\rho^t\otimes \rho^0$ for every $t<s$. 

We prove that $G\actson (X,\mu_t)$ is strongly ergodic for every $t<\min\{t',s\}$, in which case we can apply \cite[Lemma 5.2]{MV20} to the nonsingular action $G\actson (X,\mu_t)$ and the pmp action $G\actson (X\times Z_0^{G},\mu_0\times \lambda^G)$ by our choice of $t'$ and $s$. After rescaling, we may assume that $G\actson (X,\mu_1)$ is ergodic and that $\rho^{t}$ is not weakly contained in $\rho^{t}\otimes \rho^{0}$ for every $t\in (0,1)$. 

Let $t\in (0,1)$ be arbitrary and define the map
\begin{align*}
\Psi\colon X\times X\times Z_0^{G}\rightarrow X:\;\;\;\; \Psi(x,y,z)_h=\begin{cases}x_h &\text{ if } z_h\leq t\\
y_h &\text{ if } z_h>t\end{cases}.
\end{align*}
Then $\Psi$ is $G$-equivariant and we have that $\Psi(\mu_1\times \mu_0\times \lambda^{G})=\mu_t$. Suppose that $G\actson (X,\mu_t)$ is not strongly ergodic. Then we can find a bounded almost invariant sequence $f_n\in L^{\infty}(X,\mu_t)$ such that $\|f_n\|_2=1$ and $\mu_t(f_n)=0$ for every $n\in \mathbb{N}$. Therefore $\Psi_*(f_n)$ is a bounded almost invariant sequence for $G\actson (X\times X\times Z_0^{G},\mu_1\times \mu_0\times \lambda^{G})$. Let $E\colon L^{\infty}(X\times X\times Z_0^{G})\rightarrow L^{\infty}(X)$ be the conditional expectation that is uniquely determined by $\mu_1\circ E=\mu_1\times \mu_0\times \lambda^{G}$. By \cite[Lemma 5.2]{MV20} we have that $\lim_{n\rightarrow \infty}\|(E\circ \Psi_*)(f_n)-\Psi_*(f_n)\|_2=0$. As $\Psi$ is measure preserving we get in particular that
\begin{align}\label{eq:E circ Psi}
\lim_{n\rightarrow \infty}\|(E\circ \Psi_*)(f_n)\|_2=1.
\end{align}
Note that if $\mu_t(f)=0$ for some $f\in L^{2}(X,\mu_t)$, we have that $\mu_1((E\circ\Psi_*)(f))=0$. So we can view $E\circ \Psi_*$ as a bounded operator
\begin{align*}
E\circ\Psi_*\colon L^2(X,\mu_t)\ominus \C 1\rightarrow L^2(X,\mu_1)\ominus \C 1.
\end{align*}

\vspace{.5cm}

{\bf Claim.} The bounded operator $E\circ\Psi_*\colon L^2(X,\mu_t)\ominus \C 1\rightarrow L^2(X,\mu_1)\ominus \C 1$ has norm strictly less than $1$.

\vspace{.5cm}

The claim is in direct contradiction with \eqref{eq:E circ Psi}, so we conclude that $G\actson(X,\mu_t)$ is strongly ergodic. 

{\it Proof of claim.} For every $g\in G$, let $\vphi_g$ be the map
\begin{align*}
\vphi_g\colon L^2(X_0,\mu_g^t)\rightarrow L^2(X_0,\mu_g): \;\;\;\; \vphi_g(F)=tF+(1-t)\nu(F)\cdot 1.
\end{align*}
Then $E\circ \Psi_*\colon L^2(X_0,\mu_t)\rightarrow L^2(X,\mu_1)$ is given by the infinite product $\bigotimes_{g\in G}\vphi_g$. For every $g\in G$ we have that
\begin{align*}
\|F\|_{2,\mu_g}=\|(d\mu_g^t/d\mu_g)^{-1/2}F\|_{2,\mu_g^t}\leq t^{-1/2}\|F\|_{2,\mu_g^t},
\end{align*}
so that the inclusion map $\iota_g \colon  L^2(X_0,\mu_g^t)\hookrightarrow L^2(X_0,\mu_g)$ satisfies $\|\iota_g\|\leq t^{-1/2}$ for every $g\in G$. We have that
\begin{align*}
\vphi_g(F)=t(F-\mu_g(F)\cdot 1)+\mu_t(F)\cdot 1, \text{ for every } F\in L^2(X_0,\mu_g^t).
\end{align*} 
So if we write $P_g^t$ for the projection map onto $L^2(X_0,\mu_g^t)\ominus \C 1$, and $P_g$ for the projection map onto $L^2(X_0,\mu_g)\ominus \C 1$, we have that 
\begin{align}\label{eq:phi commutes with projection}
\vphi_g\circ P^t_g=t(P_g\circ \iota_g), \text{ for every } g\in G.
\end{align}
For a nonempty finite subset $\mathcal{F}\subset G$ let $V(\mathcal{F})$ be the linear subspace of $L^2(X,\mu_t)\ominus \C 1$ spanned by 
\begin{align*}
\left(\bigotimes_{g\in \mathcal{F}}L^2(X_0,\mu_g^t)\ominus \C 1\right)\otimes \bigotimes_{g\in G\setminus \mathcal{F}}1.
\end{align*}
Then using \eqref{eq:phi commutes with projection} we see that 
\begin{align*}
\|(E\circ \Psi_*)(f)\|_2\leq t^{|\mathcal{F}|/2}\|f\|_2, \text{ for every } f\in V(\mathcal{F}).
\end{align*}
Since $\bigoplus_{\mathcal{F}\neq \emptyset}V(\mathcal{F})$ is dense inside $L^2(X,\mu_t)\ominus \C 1$, we have that
\begin{align*}
\left\|(E\circ\Psi_*)\big|_{L^2(X,\mu_t)\ominus\C 1}\right \|\leq t^{1/2}<1.
\end{align*}
\end{proof}
\section{Nonsingular Bernoulli actions arising from groups acting on trees: proof of Theorem \ref{thm:trees}}\label{sec:trees}

Let $T$ be a locally finite tree and choose a root $\rho\in T$. Let $\mu_0$ and $\mu_1$ be equivalent probability measures on a standard Borel space $X_0$. Following \cite[Section 10]{AIM19} we define a family of equivalent probability measures $(\mu_e)_{e\in E}$ by
\begin{align}\label{eq:directed family}
\mu_e=\begin{cases}\mu_0 &\text{ if } e \text{ is oriented towards } \rho\\
\mu_1 &\text{ if } e \text{ is oriented away from } \rho\end{cases}.
\end{align}
Let $G\subset \Aut(T)$ be a subgroup. When $g\in G$ and $e\in E$, the edges $e$ and $g\cdot e$ are simultaneously oriented towards, or away from $\rho$, unless $e\in E([\rho,g\cdot \rho])$. As $E([\rho,g\cdot \rho])$ is finite for every $g\in G$, the generalized Bernoulli action 
\begin{align}\label{eq:directed generalized Bernoulli}
G\actson (X,\mu)=\prod_{e\in E}(X_0,\mu_e): \;\;\;\; (g\cdot x)_e=x_{g^{-1}\cdot e}
\end{align}
is nonsingular. If we start with a different root $\rho'\in T$, let $(\mu'_e)_{e\in E}$ denote the corresponding family of probability measures on $X_0$. Then we have that $\mu_e=\mu'_e$ for all but finitely many $e\in E$, so that the measures $\prod_{e\in E}\mu_e$ and $\prod_{e\in E}\mu'_e$ are equivalent. Therefore, up to conjugacy, the action \eqref{eq:directed generalized Bernoulli} is independent of the choice of root $\rho\in T$. 

\begin{lemma}
Let $T$ be a locally finite tree such that each vertex $v\in V(T)$ has degree at least $2$. Suppose that $G\subset \Aut(T)$ is a countable subgroup. Let $\mu_0$ and $\mu_1$ be equivalent probability measures on a standard Borel space $X_0$ and fix a root $\rho \in T$. Then the action $\alpha\colon G\actson (X,\mu)$ given by \eqref{eq:directed generalized Bernoulli} is essentially free. 
\end{lemma}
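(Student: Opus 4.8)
The plan is to prove essential freeness by showing directly that for every $g\in G\setminus\{e\}$ the fixed-point set
\[
F_g=\{x\in X:g\cdot x=x\}=\{x:x_{g^{-1}\cdot e}=x_e\text{ for all }e\in E\}
\]
is $\mu$-null; since $G$ is countable, this is exactly essential freeness. The point is that $x\in F_g$ iff $x$ is constant on each $g$-orbit in $E$. As the $g$-orbits partition $E$ and distinct coordinates are $\mu$-independent, the events ``$x|_O$ is constant'' for different orbits $O$ are independent, so $\mu(F_g)=\prod_O P_O$, where $P_O=\mu(\{x:x|_O\text{ is constant}\})$ is computed for $\prod_{e\in O}\mu_e$. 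Since $P_O=1$ on singleton orbits, everything reduces to the non-singleton orbits.

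Next I would record an elementary measure estimate. Setting $c=\max\{\sum_a\mu_0(\{a\})^2,\sum_a\mu_1(\{a\})^2\}$, where $a$ ranges over atoms, Cauchy--Schwarz gives $P_O\le c$ whenever $|O|\ge 2$ and $P_O=0$ whenever $O$ is infinite. Because $\mu_0\sim\mu_1$ are not point masses (in the degenerate case $\mu_0=\mu_1=\delta_a$ the space $(X,\mu)$ is a single atom and the action is trivial, so this case is excluded) we have $c<1$. Hence $\mu(F_g)=\prod_O P_O=0$ as soon as $g$ has either one infinite orbit on $E$ or infinitely many non-singleton orbits on $E$. Both are subsumed by the single combinatorial statement that $g$ \emph{moves infinitely many edges}.

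The heart of the argument is therefore the combinatorial claim that every $g\in\Aut(T)\setminus\{e\}$ moves infinitely many edges, where the hypothesis that $T$ is locally finite with all degrees $\ge 2$ is used to guarantee that every branch is infinite. I would split along Tits' trichotomy \cite[Proposition 3.2]{Tit70}. If $g$ is hyperbolic, its axis is an infinite $g$-orbit of edges. If $g$ inverts an edge, it interchanges the two half-trees determined by that edge, each infinite, so every edge of one half-tree is moved. The remaining case is $g$ elliptic with nonempty fixed subtree $F=\mathrm{Fix}(g)$: here I would choose a moved vertex $v_0$ at minimal distance from $F$, so that its neighbour $w$ towards $F$ is fixed. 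Then $v_0$ and $g\cdot v_0$ are distinct neighbours of $w$, hence lie in distinct branches $T_{v_0}$ and $T_{g\cdot v_0}$ of $T$ at $w$; since $g(T_{v_0})=T_{g\cdot v_0}$ is disjoint from $T_{v_0}$, every one of the infinitely many edges inside $T_{v_0}$ is moved.

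I expect the elliptic case to be the main obstacle. It requires knowing that the fixed-point set of a tree automorphism is a subtree, so that the minimal-distance moved vertex $v_0$ has its $F$-ward neighbour $w$ genuinely fixed, and it requires that the branch $T_{v_0}$ is infinite, which is precisely where degree $\ge 2$ enters (it provides a ray out of $v_0$ not returning through $w$). Once the combinatorial claim is established, combining it with the measure estimate of the second paragraph gives $\mu(F_g)=0$ for every $g\neq e$, which is the asserted essential freeness.
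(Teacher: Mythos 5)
Your proof is correct, and it reaches the conclusion by a genuinely different decomposition than the paper. The paper also splits along Tits' trichotomy, but it does the measure theory case by case: for an elliptic $g$ it produces two disjoint infinite subtrees $T_1,T_2$ with $g\cdot T_1=T_2$ and uses that the graph of a nonsingular isomorphism between the (nonatomic) product spaces over $E(T_1)$ and $E(T_2)$ is null, while for a hyperbolic $g$ it restricts to the axis and invokes \cite[Lemma 2.2]{BKV19} on essential freeness of nonsingular $\Z$-Bernoulli actions. You instead prove a single quantitative measure estimate once and for all --- the orbit decomposition of $E$ under $\langle g\rangle$, independence of the events ``constant on an orbit'', and the bound $P_O\le c<1$ for non-singleton orbits (with $P_O=0$ for infinite orbits) --- and reduce everything to the combinatorial statement that every nontrivial automorphism moves infinitely many edges. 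This buys you a self-contained argument with no appeal to \cite{BKV19} and a uniform treatment of all atom configurations, at the cost of the slightly more delicate elliptic analysis (fixed subtree, moved vertex at minimal distance), which the paper in fact also needs implicitly to produce its swapped infinite subtrees. Two small points of hygiene: your ``Cauchy--Schwarz'' tag does not literally cover the claim $P_O=0$ for infinite $O$ (one should either pass to the limit in $\sum_a\prod_{e\in O}\mu_e(\{a\})$ using that no atom has full mass, or pair up coordinates to get $P_O\le c^{\lfloor |O|/2\rfloor}$), and your explicit exclusion of the point-mass case $\mu_0=\mu_1=\delta_a$ is a genuine necessary hypothesis that the lemma as stated leaves implicit (it is the ``nontrivial'' assumption appearing in Theorem \ref{thm:directed trees}); the paper's own proof also tacitly uses it when asserting nonatomicity of the product spaces.
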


\begin{proof}
Take $g\in G\setminus\{e\}$. It suffices to show that $\mu(\{x\in X:g\cdot x=x\})=0$. If $g$ is elliptic, there exist disjoint infinite subtrees $T_1,T_2\subset T$ such that $g\cdot T_1=T_2$. Note that 
\begin{align*}
(X_1,\mu_1)=\prod_{e\in E(T_1)}(X_0,\mu_e) \text{ and } (X_2,\mu_2)=\prod_{e\in E(T_2)}(X_0,\mu_e)
\end{align*}
are nonatomic and that $g$ induces a nonsingular isomorphism $\vphi\colon(X_1,\mu_1)\rightarrow (X_2,\mu_2): \vphi(x)_e=x_{g^{-1}\cdot e}$. We get that 
\begin{align*}
\mu_1\times \mu_2(\{(x, \vphi(x)): x\in X_1\})=0.
\end{align*}
A fortiori $\mu(\{x\in X:g\cdot x=x\})=0$. If $g$ is hyperbolic, let $L_g\subset T$ denote its axis on which it acts by nontrivial translation. Then $\prod_{e\in E(L_g)}(X_0,\mu_e)$ is nonatomic and by \cite[Lemma 2.2]{BKV19} the action $g^{\Z}\actson \prod_{e\in E(L_g)}(X_0,\mu_e)$ is essentially free. This implies that also $\mu(\{x\in X:g\cdot x=x\})=0$. 
\end{proof}

We prove Theorem \ref{thm:directed trees} below, which implies Theorem \ref{thm:trees} and also describes the stable type when the action is weakly mixing. 

\begin{theorem}\label{thm:directed trees}
Let $T$ be a locally finite tree with root $\rho\in T$. Let $G\subset \Aut(T)$ be a closed nonelementary subgroup with Poincaré exponent $\delta=\delta(G\actson T)$ given by \eqref{eq:Poincare exponent intro}. Let $\mu_0$ and $\mu_1$ be nontrivial equivalent probability measures on a standard Borel space $X_0$. Consider the generalized nonsingular Bernoulli action $\alpha\colon G\actson (X,\mu)$ given by \eqref{eq:directed generalized Bernoulli}. Then $\alpha$ is 
\begin{itemize}
\item Weakly mixing if $1-H^2(\mu_0,\mu_1)>\exp(-\delta/2)$.
\item Dissipative up to compact stabilizers if $1-H^2(\mu_0,\mu_1)<\exp(-\delta/2)$.
\end{itemize}
Let $G\actson (Y,\nu)$ be an ergodic pmp action and let $\Lambda\subset \R$ be the smallest closed subgroup that contains the essential range of the map 
\begin{align*}
X_0\times X_0\rightarrow \R: \;\;\;\; (x,x')\mapsto \log(d\mu_0/d\mu_1)(x)-\log(d\mu_0/d\mu_1)(x').
\end{align*}
Let $\Delta\colon G\rightarrow \R_{>0}$ denote the modular function and let $\Sigma$ be the smallest subgroup generated by $\Lambda$ and $\log(\Delta(G))$. 

Suppose that $1-H^2(\mu_0,\mu_1)>\exp(-\delta/2)$. Then the Krieger flow and the flow of weights of  $\beta\colon G\actson X\times Y$ are determined by $\Lambda$ and $\Sigma$ as follows. 
\begin{enumerate}
\item If $\Lambda$, resp. $\Sigma$, is trivial, then the Krieger flow, resp. flow of weights, is given by $\R\actson \R$.
\item If $\Lambda$, resp. $\Sigma$, is dense, then the Krieger flow, resp. flow of weights, is trivial. 
\item If $\Lambda$, resp. $\Sigma$, equals $a\Z$, with $a>0$, then the Krieger flow, resp. flow of weights, is given by $\R\actson \R/a\Z$.  
\end{enumerate}
\end{theorem}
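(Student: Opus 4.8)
The plan is to reduce the whole dichotomy to the a.e.\ finiteness of a single Poincaré-type series, and to locate the factor $\delta/2$ in the observation that $E(T)$ contains \emph{two} oriented edges per geometric edge. First I would compute, for $g\in G$, the Koopman coefficient
$\langle\rho_g 1,1\rangle=\int_X(dg\mu/d\mu)^{1/2}\,d\mu=\prod_{e\in E(T)}\bigl(1-H^2(\mu_e,\mu_{g^{-1}e})\bigr)$.
An oriented edge $e$ has $\mu_e\neq\mu_{g^{-1}e}$ exactly when the underlying geometric edge lies on the geodesic $[\rho,g\rho]$, and in that case \emph{both} oriented versions flip; hence there are $2\,d(\rho,g\rho)$ nontrivial factors and, with $A=1-H^2(\mu_0,\mu_1)\in(0,1)$, one gets $\langle\rho_g 1,1\rangle=A^{2d(\rho,g\rho)}$. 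The same bookkeeping shows $dg\mu/d\mu(x)$ depends only on $v=g\rho$; write it $R_v(x)$, a product of independent factors along $[\rho,v]$. Since $R_v$ is constant on the left cosets $g\,\operatorname{Stab}(\rho)$ and $\operatorname{Stab}(\rho)$ is compact, $\int_G (dg\mu/d\mu)(x)\,d\lambda(g)=\lambda(\operatorname{Stab}(\rho))\sum_{v\in G\cdot\rho}R_v(x)$, so by \cite{AIM19} (Theorem A.29, Proposition A.28 and Lemma \ref{lem:infinitely recurrent}) everything reduces to whether $\sum_{v\in G\cdot\rho}R_v(x)$ is a.e.\ finite.

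For the dissipative direction ($A<\exp(-\delta/2)$) I would use subadditivity of the square root: $\mathbb{E}\bigl[(\textstyle\sum_v R_v)^{1/2}\bigr]\le\sum_v\mathbb{E}[R_v^{1/2}]=\sum_{v\in G\cdot\rho}A^{2d(\rho,v)}=\sum_v\exp(-(-2\log A)\,d(\rho,v))$. As $-2\log A>\delta$, this is a convergent Poincaré series \eqref{eq:Poincare exponent intro}, whence $\sum_v R_v<+\infty$ a.e.\ and the action is dissipative up to compact stabilizers.

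The hard part is the conservative direction ($A>\exp(-\delta/2)$), where I must prove $\sum_v R_v=+\infty$ a.e.\ while the first-moment bound above now merely diverges and yields nothing. I would pass to a second-moment estimate on the level sums $W_n=\sum_{d(\rho,v)=n}R_v$, using $\mathbb{E}[W_n]\asymp e^{\delta n}$ and $\mathbb{E}[W_n^2]=\sum_{v,w}(\mathbb{E}[Y^2])^{2\,d(\rho,v\wedge w)}$ with $Y=(d\mu_1/d\mu_0)(x_e)$. The obstacle is that $\mathbb{E}[Y^2]=\chi^2(\mu_1\Vert\mu_0)+1$ may be infinite; to reach the sharp threshold I would truncate $Y$ to $\{|\log(d\mu_0/d\mu_1)|\le M\}$, run Paley--Zygmund to produce a positive-measure set on which $\sum_v R_v=+\infty$, and then invoke tail triviality of the Bernoulli action ($G$-invariance of the dissipative part together with a $0$--$1$ law) to upgrade to an a.e.\ statement. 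An attractive alternative that matches the exponents automatically is to transfer conservativity from the orientation action of \cite{AIM19} (Theorem 10.4) at the matched parameter $2\sqrt{p(1-p)}=A^2$ through a measure-preserving $G$-factor, via Lemma \ref{lem:strongly recurrent}; deciding which route is cleanest is the crux of the proof.

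Granting conservativity (hence infinite recurrence), weak mixing follows from Schmidt--Walters \cite{SW81} (valid for infinitely recurrent actions by \cite{AIM19}, Remark 7.4): for ergodic pmp mixing $G\actson(Y,\nu)$ one gets $L^\infty(X\times Y)^G=L^\infty(X)^G$, and the coordinate-independence argument of Claim 1 in the proof of Theorem \ref{thm:phase transition general} forces $L^\infty(X)^G=\C 1$, giving ergodicity of all diagonal products. For the type computation I would feed the essentially free ergodic action into the modular Maharam extension of Definition \ref{def:modular Maharam}: the essential range of the cocycle is generated by $\log(d\mu_0/d\mu_1)$, so the invariants of $G\actson X\times\R$ (resp.\ of its modular version) are governed by $\Lambda$ (resp.\ by $\Sigma=\langle\Lambda,\log\Delta(G)\rangle$). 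Applying Schmidt--Walters once more on the still infinitely recurrent Maharam extension yields the stated trichotomy---the flow $\R\actson\R$ when the group is trivial, the trivial flow when it is dense, and $\R\actson\R/a\Z$ when it equals $a\Z$---thereby identifying the Krieger flow and the flow of weights.
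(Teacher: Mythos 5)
Your reduction to the a.e.\ behaviour of $\sum_{v\in G\cdot\rho}\exp(S_v)$ and your treatment of the dissipative regime are correct and coincide with the paper's: the first-moment bound $\mathbb{E}[\exp(S_v/2)]=(1-H^2(\mu_0,\mu_1))^{2d(\rho,v)}$ (with the factor $2$ coming from the two oriented edges per geometric edge) gives a convergent Poincaré series exactly when $1-H^2(\mu_0,\mu_1)<\exp(-\delta/2)$. The genuine gap is in the recurrent regime, which you correctly identify as the crux but do not resolve. A second-moment/Paley--Zygmund argument on $W_n=\sum_{d(\rho,v)=n}R_v$ (truncated or not) cannot reach the sharp threshold: the pair correlations $\mathbb{E}[R_vR_w]$ are governed by $\int(d\mu_1/d\mu_0)^2\,d\mu_0$ rather than by the Hellinger integral, and even the natural square-root tilt $\widetilde W_n=\sum_v\exp(S_v/2)$ produces the condition $(1-H^2)^4>e^{-\delta}$, i.e.\ the threshold $e^{-\delta/4}$, not $e^{-\delta/2}$. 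The sharp constant arises from $\inf_{t\ge 0}\mathbb{E}[e^{tZ}]=(1-H^2(\mu_0,\mu_1))^2$ (attained at $t=1/2$ by the symmetry $\vphi(t)=\vphi(1-t)$), and the paper accesses it through the Chernoff--Cram\'er estimate $\mathbb{P}(R_M\ge 0)>\exp(-M\delta(G'))$ followed by Lyons' quasi-Bernoulli percolation theorem on a rescaled tree, after first replacing $G$ by a compactly generated nonelementary closed subgroup $G'$ acting cocompactly on its minimal subtree (via \cite{AIM19}); this last reduction is also what justifies your unproved assumption that the orbit at distance $n$ has size $\asymp e^{\delta n}$. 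Your fallback --- transferring recurrence from the orientation action of \cite{AIM19} through a factor map --- goes in the wrong direction: factors of conservative actions are conservative, so exhibiting the orientation action as a factor of the Bernoulli action (even if the parameters could be matched) yields nothing about the Bernoulli action itself.

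Two further points are sketched too thinly to count as proofs, though the intended route is the right one. For weak mixing you cannot apply the Claim-1 resampling argument to $\mu_0,\mu_1$ directly: you must first exhibit $\mu_j=(1-s)\nu+s\eta_j^s$ as a nontrivial convex combination with the $\eta^s$-action \emph{still} infinitely recurrent, which the paper does by constructing $\nu$ from $\min\{d\mu_1/d\mu_0,1\}$ and using dominated convergence to keep $1-H^2(\eta_0^s,\eta_1^s)>\exp(-\delta/2)$; only then does Remark \ref{rem:lcsc phase transition} apply. For the Krieger flow and flow of weights, "Schmidt--Walters once more on the Maharam extension" does not by itself kill the dependence of an invariant function on all $X$-coordinates: the paper needs the splitting of $E(T)$ along the minimal subtree $E(T')$ of a suitable $G'$ with $G/G'$ noncompact, the pmp Bernoulli factor over $E(T)\setminus E(T')$, and crucially Lemma \ref{lem:trivial intersection} to translate the subtree around and conclude $\bigcap_{g}(E(gT')\cup E([v,g^{-1}v]))=\emptyset$; the non-unimodular case additionally requires restricting to $\ker\Delta$ and the fact that this kernel has the same Poincar\'e exponent.
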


In general, we do not know the behaviour of the action \eqref{eq:directed generalized Bernoulli} in the critical situation $1-H^2(\mu_0,\mu_1)=\exp(-\delta/2)$. However, if $T$ is a regular tree and $G\actson T$ has full Poincaré exponent, we prove in Proposition \ref{prop:critical value} below that the action is dissipative up to compact stabilizers. This is similar to \cite[Theorem 8.4 \& Theorem 9.10]{AIM19}.

\begin{proposition}\label{prop:critical value}
Let $T$ be a $q$-regular tree with root $\rho \in T$ and let $G\subset \Aut(T)$ be a closed subgroup with Poincaré exponent $\delta=\delta(G\actson T)=\log(q-1)$. Let $\mu_0$ and $\mu_1$ be equivalent probability measures on a standard Borel space $X_0$. 

If $1-H^2(\mu_0,\mu_1)=(q-1)^{-1/2}$, then the action \eqref{eq:directed generalized Bernoulli} is dissipative up to compact stabilizers. 
\end{proposition}

Interesting examples of actions of the form \eqref{eq:directed generalized Bernoulli} arise when $G\subset \Aut(T)$ is the free group on a finite set of generators acting on its Cayley tree. In that case, following \cite[Section 6]{AIM19} and \cite[Remark 5.3]{MV20}, we can also give a sufficient criterion for strong ergodicity. 

\begin{proposition}\label{prop:free group}
Let the free group $\mathbb{F}_d$ on $d\geq 2$ generators act on its Cayley tree $T$. Let $\mu_0$ and $\mu_1$ be equivalent probability measures on a standard Borel space $X_0$. Then the action \eqref{eq:directed generalized Bernoulli} dissipative if $1-H^2(\mu_0,\mu_1)\leq (2d-1)^{-1/2}$ and weakly mixing and nonamenable if $1-H^2(\mu_0,\mu_1)>(2d-1)^{-1/2}$. Furthermore the action \eqref{eq:directed generalized Bernoulli} is strongly ergodic when $1-H^2(\mu_0,\mu_1)>(2d-1)^{-1/4}$.
\end{proposition}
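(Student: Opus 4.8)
The plan is to specialise the tree results to $G=\F_d$ acting on its Cayley tree $T$, which is the $(2d)$-regular tree on which $\F_d$ acts freely and vertex-transitively; thus $\F_d$ is a discrete (hence closed) nonelementary subgroup of $\Aut(T)$, the action $G\actson T$ is cocompact, and $\delta=\delta(G\actson T)=\dim_H\partial T=\log(2d-1)$, so $\exp(-\delta/2)=(2d-1)^{-1/2}$ and $\exp(-\delta/4)=(2d-1)^{-1/4}$. Writing $r=1-H^2(\mu_0,\mu_1)$, the dissipative/weakly mixing dichotomy is then immediate from Theorem \ref{thm:directed trees}: the action is weakly mixing for $r>(2d-1)^{-1/2}$ and dissipative up to compact stabilizers for $r<(2d-1)^{-1/2}$, while the boundary case $r=(2d-1)^{-1/2}$ is exactly Proposition \ref{prop:critical value} with $q=2d$ (here $\delta=\log(q-1)$). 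Since $\F_d$ is torsion-free its only compact subgroup is trivial, and the action is essentially free by the Lemma above (every vertex has degree $2d\ge 2$); hence "dissipative up to compact stabilizers" upgrades to dissipative for all $r\le(2d-1)^{-1/2}$.

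For nonamenability I would follow the method of \cite[Section 6]{AIM19}, based on the representation of $\F_d$ attached to the action. Amenability of $G\actson(X,\mu)$ would, by \cite[Theorem 3.7]{Nev03}, force the Koopman representation $\rho$ to be weakly contained in $\lambda_{\F_d}$. The constant vector already gives the radial coefficient $\langle\rho_g\mathbf 1,\mathbf 1\rangle=\prod_{e\in E(T)}(1-H^2(\mu_{g^{-1}\cdot e},\mu_e))=r^{2|g|}$, since both orientations of each of the $|g|$ edges on the geodesic $[\rho,g\rho]$ are flipped by $g$. By Haagerup's inequality on $\F_d$, this function fails to be tempered as soon as $(2d-1)^{1/2}r^2>1$, which already yields nonamenability for $r>(2d-1)^{-1/4}$. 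Covering the remaining window $(2d-1)^{-1/2}<r\le(2d-1)^{-1/4}$ is more delicate, and is where I would invoke the finer spectral analysis of the attached representation from \cite[Section 6]{AIM19} to rule out weak containment in $\lambda_{\F_d}$ throughout the weakly mixing regime.

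The genuinely new point is strong ergodicity, and the key observation is that passing to the diagonal squares the Hellinger parameter. The product action $G\actson\prod_{e\in E(T)}(X_0\times X_0,\mu_e\times\mu_e)$ is again of the form \eqref{eq:directed generalized Bernoulli} for the same rooted tree, with base measures $\mu_0\times\mu_0$ (towards $\rho$) and $\mu_1\times\mu_1$ (away from $\rho$), whose affinity is $1-H^2(\mu_0\times\mu_0,\mu_1\times\mu_1)=(1-H^2(\mu_0,\mu_1))^2=r^2$. Applying Theorem \ref{thm:directed trees} to this diagonal action gives weak mixing precisely when $r^2>\exp(-\delta/2)$, i.e.\ $r>(2d-1)^{-1/4}$. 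Following \cite[Remark 5.3]{MV20}, I would then extract from the supercritical regime a genuine spectral gap for the diagonal Koopman representation $\rho\otimes\overline\rho$, and combine it with the nonamenability of $\F_d$ (used to supply an auxiliary pmp action with stable spectral gap) to conclude, via \cite[Lemma 5.2]{MV20}, strong ergodicity of $G\actson(X,\mu)$: a nontrivial almost invariant sequence for $G\actson X$ would, after the transfer of Lemma 5.2, contradict the absence of almost invariant vectors in $\rho\otimes\overline\rho$.

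I expect the main obstacle to be this last step, specifically upgrading "weak mixing of the diagonal action" to the stable spectral gap demanded by \cite[Lemma 5.2]{MV20}: weak mixing only asserts ergodicity of products with ergodic pmp actions, whereas strong ergodicity of the nonsingular action $G\actson X$ requires $\rho\otimes\overline\rho$ to have no almost invariant vectors whatsoever, so one must verify that the estimate behind the weakly mixing half of Theorem \ref{thm:directed trees} really delivers this gap and that the nonsingular transfer of \cite[Lemma 5.2]{MV20} applies — the subtle point being that almost invariant \emph{sets} for a nonsingular action need not produce almost invariant Koopman \emph{vectors}. A secondary obstacle, noted above, is pushing nonamenability across the full window $r>(2d-1)^{-1/2}$ rather than only the smaller range $r>(2d-1)^{-1/4}$ visible from the constant-vector coefficient, which is exactly why the sharper representation-theoretic input of \cite[Section 6]{AIM19} is needed.
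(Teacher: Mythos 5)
Your reduction of the dissipative/weakly mixing dichotomy to Theorem \ref{thm:directed trees} and Proposition \ref{prop:critical value} (with $q=2d$, $\delta=\log(2d-1)$), and the upgrade from ``dissipative up to compact stabilizers'' to ``dissipative'' via torsion-freeness and essential freeness, matches the paper. The two remaining parts, however, each contain a genuine gap.

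For nonamenability, the ``finer spectral analysis'' you hope will cover the window $(2d-1)^{-1/2}<r\le(2d-1)^{-1/4}$ does not exist along the route you propose: the paper's own computation of the $\eta$-spectral radius (via \cite[Proposition A.11]{AIM19} and \cite[Theorem 6.10]{AIM19}) shows that $\rho_\eta(\alpha)=\sqrt{2d-1}/d=\rho_\eta(\F_d)$ exactly when $r\le(2d-1)^{-1/4}$, so in that window the Koopman representation is spectrally indistinguishable from a tempered one and no coefficient/Haagerup argument can detect nonamenability. The paper instead uses a structural argument: take a normal subgroup $H_1\lhd\F_d$ free on infinitely many generators, note $\delta(H_1)=\delta(\F_d)$ by Roblin--Tapie, extract via Sullivan a finitely generated free factor $H_2$ with $1-H^2(\mu_0,\mu_1)>\exp(-\delta(H_2)/2)$, build a normal subgroup $N\supset H_2$ of $H_1$ with $H_1/N$ free on infinitely many generators, observe that $N\actson(X,\mu)$ is already ergodic by Theorem \ref{thm:directed trees}, and conclude nonamenability from \cite[Lemma 6.4]{MV20}. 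Without some such input your argument only reaches $r>(2d-1)^{-1/4}$.

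For strong ergodicity, your diagonal-squaring observation correctly locates the threshold $(2d-1)^{-1/4}$, but the step from ``the diagonal action is weakly mixing'' to ``$\rho\otimes\overline{\rho}$ has spectral gap'' is exactly the unjustified leap you flag yourself, and it is not how the proof goes. What is actually needed is the statement $\rho\not\prec\rho\otimes\rho^0$, and the paper obtains it not from weak mixing but from the quantitative fact that $\rho_\eta(\alpha)$ strictly exceeds Kesten's value $\sqrt{2d-1}/d$ when $r>(2d-1)^{-1/4}$, hence $\rho\not\prec\lambda_{\F_d}$, while $\rho^0$ is contained in a multiple of $\lambda_{\F_d}$. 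One must also first deform the measures by the convex-combination trick ($\mu_j=(1-s)\nu+s\eta_j$ with $1-H^2(\eta_0,\eta_1)$ still above the threshold) so that the factor map $\Psi$ and the conditional expectation $E$ are available, and then the contradiction comes from the explicit contraction $\|(E\circ\Psi_*)|_{L^2(X,\mu)\ominus\C 1}\|<1$ against \cite[Lemma 5.2]{MV20}. Your worry about almost invariant sets versus almost invariant vectors is handled precisely by this transfer, but the spectral-gap input has to be produced by the norm computation, not extracted from weak mixing.
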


The proof of Theorem \ref{thm:directed trees} below is similar to that of \cite[Theorem 4]{LP92} and \cite[Theorems 10.3 \& 10.4]{AIM19}

\begin{proof}[Proof of Theorem \ref{thm:directed trees}]
Define a family $(X_e)_{e\in E}$ of independent random variables on $(X,\mu)=\prod_{e\in E}(X_0,\mu_e)$ by
\begin{align}\label{eq:family of rvs}
X_e(x)=\begin{cases}\log (d\mu_1/d\mu_0) (x_e)&\text{ if } e \text{ is oriented towards } \rho\\
\log (d\mu_0/d\mu_1)(x_e) &\text{ if } e \text{ is oriented away from } \rho\end{cases}.
\end{align}
For $v\in T$ we write 
\begin{align*}
S_v=\sum_{e\in E([\rho, v])} X_e.
\end{align*}
Then we have that
\begin{align*}
\frac{dg\mu}{d\mu}=\exp(S_{g\cdot \rho}), \text{ for every  }g\in G.
\end{align*}
Since $G\subset \Aut(T)$ is a closed subgroup, for each $v\in T$, the stabilizer subgroup $G_v=\{g\in G:g\cdot v= v\}$ is a compact open subgroup of $G$. 

Suppose that $1-H^2(\mu_0,\mu_1)<\exp(-\delta/2)$. Then we have that
\begin{align*}
\int_X\sum_{v\in G\cdot \rho}\exp(S_v(x)/2)d\mu(x)=\sum_{v\in G\cdot \rho}(1-H^2(\mu_0,\mu_1))^{2d(\rho,v)}<+\infty,
\end{align*}
by definition of the Poincaré exponent. Therefore we have that $\sum_{v\in G\cdot \rho}\exp(S_v(x)/2)<+\infty$ for a.e. $x\in X$. Let $\lambda$ denote the left invariant Haar measure on $G$ and define $L=\lambda(G_\rho)$, where $G_\rho=\{g\in G:g\cdot \rho=\rho\}$. Then we have that 
\begin{align*}
\int_{G}\frac{dg\mu}{d\mu}(x)d\lambda(g)=L\sum_{v\in G\cdot \rho }\exp(S_v(x))<+\infty, \text{ for a.e. } x\in X.
\end{align*}
We conclude that $G\actson (X,\mu)$ is dissipative up to compact stabilizers.

Now assume that $1-H^2(\mu_0,\mu_1)>\exp(-\delta/2)$. We start by proving that $G\actson (X,\mu)$ is infinitely recurrent. By \cite[Theorem 8.17]{AIM19} we can find a nonelementary closed compactly generated subgroup $G'\subset G$ such that $1-H^2(\mu_0,\mu_1)>\exp(-\delta(G')/2)$. Let $T'\subset T$ be the unique minimal $G'$-invariant subtree. Then $G'$ acts cocompactly on $T'$ and we have that $\delta(G')=\dim_{H}\partial T'$. Let $X$ and $Y$ be independent random variables with distributions $(\log d\mu_1/d\mu_0)_*\mu_0$ and $(\log d\mu_0/d\mu_1)_*\mu_1$ respectively. Set $Z=X+Y$ and write 
\begin{align*}
\vphi(t)=\mathbb{E}(\exp(tZ)).
\end{align*}
The assignment $t\mapsto \vphi(t)$ is convex, $\vphi(t)=\vphi(1-t)$ for every $t$ and $\vphi(1/2)=(1-H^2(\mu_0,\mu_1))^2$. We conclude that
\begin{align*}
\inf_{t\geq 0}\vphi(t)=(1-H^2(\mu_0,\mu_1))^{2}.
\end{align*}
Write $R_k$ for the sum of $k$ independent copies of $Z$. By the Chernoff--Cramér Theorem, as stated in \cite{LP92}, there exists an $M\in \mathbb{N}$ such that
\begin{align}\label{eq:large deviation estimate}
\mathbb{P}(R_M\geq 0)>\exp(-M\delta(G')).
\end{align} 

Below we define a new \emph{unoriented} tree $S$. This means that the edge set of $S$ consists of subsets $\{v,w\}\subset V(S)$. Fix a vertex $\rho'\in T'$ and define the unoriented tree $S$ as follows. 
\begin{itemize}
\item $S$ has vertices $v\in T'$ so that $d_{T'}(\rho', v)$ is divisible by $M$.
\item There is an edge $\{v,w\}\in E(S)$ between two vertices $v,w\in S$ if $d_{T'}(v,w)=M$ and $[\rho',v]_{T'}\subset [\rho',w]_{T'}$. 
\end{itemize}
Here the notation $[\rho',v]_{T'}$ means that we consider the line segment $[\rho',v]$ as a subtree of $T'$. We have that $\dim_H\partial S=M\dim_H \partial T'= M\delta(G')$. Form a random subgraph $S(x)$ of $S$ by deleting those edges $\{v,w\}\in E(S)$ where
\begin{align*}
\sum_{e\in E([v,w]_{T'})}X_e(x_e)<0.
\end{align*}
This is an edge percolation on $S$, where each edge remains with probability $p=\mathbb{P}(R_M\geq 0)$. So by \eqref{eq:large deviation estimate} we have that $p\exp(\dim_H S)>1$. Furthermore if $\{v,w\}$ and $\{v',w'\}$ are edges of $S$ so that $E([v,w]_{T'})\cap E([v',w']_{T'})=\emptyset$, their presence in $S(x)$ are independent events. So the percolation process is a quasi-Bernoulli percolation as introduced in \cite{Lyo89}. Taking $w\in (1,p\exp(\dim_H S))$ and setting $w_n=w^{-n}$, it follows from \cite[Theorem 3.1]{Lyo89} that percolation occurs almost surely, i.e. $S(x)$ contains an infinite connected component for a.e. $x\in X$. Writing 
\begin{align*}
S'_v(x)=\sum_{e\in E([\rho',v]_{T'})}X_e(x_e),
\end{align*}
this means that for a.e. $x\in (X,\mu)$ we can find a constant $a_x>-\infty$ such that $S'_v(x)>a_x$ for infinitely many $v\in T'$. As $T'/G'$ is finite, there exists a vertex $w\in T'$ such that 
\begin{align}\label{eq:divergence w}
\sum_{v\in G'\cdot w}\exp(S'_v(x))=+\infty \;\;\text{ with positive probability.}
\end{align} 
Therefore, by Kolmogorov's zero-one law, we have that $\sum_{v\in G'\cdot w}\exp(S'_v(x))=+\infty$ almost surely. Since a change of root results in a conjugate action, we may assume that $\rho=w$. Then \eqref{eq:divergence w} implies that 
$\sum_{v\in G\cdot \rho}\exp(S_v(x))=+\infty$ for a.e. $x\in X$. Writing again $L$ for the Haar measure of the stabilizer subgroup $G_\rho=\{g\in G:g\cdot \rho= \rho\}$, we see that
\begin{align*}
\int_{G}\frac{dg\mu}{d\mu}d\lambda(g)=L\sum_{v\in G\cdot \rho}\exp(S_v)=+\infty \text{ almost surely.}
\end{align*}
We conclude that $G\actson (X,\mu)$ is infinitely recurrent. We prove that $G\actson (X,\mu)$ is weakly mixing using a phase transition result from the previous section. Define the measurable map
\begin{align*}
\psi\colon X_0\rightarrow (0,1]:\;\;\;\; \psi(x)=\min\{d\mu_1/d\mu_0(x),1\}.
\end{align*}
Let $\nu$ be the probability measure on $X_0$ determined by 
\begin{align*}
\frac{d\nu}{d\mu_0}(x)=\rho^{-1}\psi(x),\;\;\;\; \text{where } \rho=\int_{X_0}\psi(x)d\mu_0(x).
\end{align*}
Then we have that $\nu\sim \mu_0$ and for every $s>1-\rho$ the probability measures 
\begin{align*}
\eta_0^s&=s^{-1}(\mu_0-(1-s)\nu)\\
\eta_1^s&=s^{-1}(\mu_1-(1-s)\nu)
\end{align*}
are well-defined. We consider the nonsingular actions $G\actson (X,\eta_s)=\prod_{e\in E}(X_0,\eta_e^s)$, where 
\begin{align*}
\eta_e^s=\begin{cases}\eta_0^s &\text{ if } e \text{ is oriented towards } \rho\\
\eta_1^s &\text{ if } e \text{ is oriented away from } \rho\end{cases}.
\end{align*}
By the dominated convergence theorem we have that $H^2(\eta_0^s,\eta_1^s)\rightarrow H^2(\mu_0,\mu_1)$ as $s\rightarrow 1$. So we can choose $s$ close enough to $1$, but not equal to $1$, such that $1-H^2(\eta_0^s,\eta_1^s)>\exp(-\delta/2)$. By the first part of the proof we have that $G\actson (X,\eta_s)$ is infinitely recurrent. Note that 
\begin{align*}
\mu_j=(1-s)\nu+s\eta_j^s, \text{ for } j=0,1.
\end{align*}
Since we assumed that $G\subset \Aut(T)$ is closed, all the stabilizer subgroups $G_{v}=\{g\in G:g\cdot v=v\}$ are compact. By Remark \ref{rem:lcsc phase transition} we conclude that $G\actson (X,\mu)$ is weakly mixing. 

Let $G\actson (Y,\nu)$ be an ergodic pmp action. To determine the Krieger flow and the flow of weights of $\beta\colon G\actson X\times Y$ we use a similar approach as in \cite[Theorem 10.4]{AIM19} and \cite[Proposition 7.3]{VW17}. First we determine the Krieger flow and then we deal with the flow of weights.

As before, let $G'\subset G$ be a nonelementary compactly generated subgroup such that $1-H^2(\mu_0,\mu_1)>\exp(-\delta(G')/2)$. By \cite[Theorem 8.7]{AIM19} we may assume that $G/G'$ is not compact. Let $T'\subset T$ be the minimal $G'$-invariant subtree. Let $v\in T'$ be as in Lemma \ref{lem:trivial intersection} below so that
\begin{align}\label{eq:empty intersection}
\bigcap_{g\in G} \left(E(gT')\cup E([v,g^{-1}\cdot v])\right)=\emptyset.
\end{align}
Since changing the root yields a conjugate action, we may assume that $\rho=v$. Let $(Z_0,\zeta_0)$ be a standard probability space such that there exist measurable maps $\theta_0,\theta_1\colon Z_0\rightarrow X_0$ that satisfy $(\theta_0)_*\zeta_0=\mu_0$ and $(\theta_1)_*\zeta_0=\mu_1$. Write 
\begin{align*}
(Z,\zeta)=\prod_{e\in E(T)\setminus E(T')}(Z_0,\zeta_0), \;\;\;\; (X_1,\rho_1)=\prod_{e\in E(T)\setminus E(T')}(X_0,\mu_e), \;\;\;\; (X_2,\rho_2)=\prod_{e\in E(T')}(X_0,\mu_e)
\end{align*}
By the first part of the proof we have that $G'\actson (X_2,\rho_2)$ is infinitely recurrent. Define the probability measure preserving map
\begin{align*}
\Psi\colon (Z,\zeta)\rightarrow (X_1,\rho_1):\;\;\;\; (\Psi(z))_e=\begin{cases}\theta_0(z_e) &\text{ if } e \text{ is oriented towards } \rho\\
\theta_1(z_e) &\text{ if  } e \text{ is oriented away from } \rho\end{cases}.
\end{align*}
Consider 
\begin{align*}
U=\{e\in E(T): e \text{ is oriented towards } \rho\}.
\end{align*}
Since $gU\triangle U=E(T)([\rho,g\cdot \rho])\subset E(T')$ for any $g\in G'$, the set $(E(T)\setminus E(T'))\cap U$ is $G'$-invariant. Therefore $\Psi$ is a $G'$-equivariant factor map. Consider the Maharam extensions
\begin{align*}
G'\actson Z\times X_2\times Y\times \R,\text{ and } G\actson X\times Y\times \R
\end{align*}
of the diagonal actions $G'\actson Z\times X_2\times Y$ and $G'\actson X\times Y\times \R$ respectively. Identifying $(X,\mu)=(X_1,\rho_1)\times (X_2,\rho_2)$ we obtain a $G'$-equivariant factor map 
\begin{align*}
\Phi\colon Z\times X_2\times Y\times \R\rightarrow X_1\times X_2\times Y\times \R:\;\;\;\; \Phi(z,x,y,t)=(\Psi(z),x,y,t).
\end{align*}
Take $F\in L^{\infty}(X\times Y\times \R)^{G}$. By \cite[Proposition A.33]{AIM19} the Maharam extension $G'\actson X_2\times Y\times\R $ is infinitely recurrent. Since $G'\actson Z$ is a mixing pmp generalized Bernoulli action we have that $F\circ \Phi \in L^{\infty}(Z\times X_2\times Y\times \R)^{G}\subset 1\ovt L^{\infty}(X_2\times Y\times \R)^{G}$, by \cite[Theorem 2.3]{SW81}. Therefore $F$ is essentially independent of the $E(T)\setminus E(T')$-coordinates. Thus for any $g\in G$ the assignment 
\begin{align*}
(x,y,t)\mapsto F(g\cdot x,y,t)=F(x,y,t-\log(dg^{-1}\mu/d\mu)(x))
\end{align*}
is essentially independent of the $E(T)\setminus E(gT')$-coordinates. Since $\log(dg^{-1}\mu/d\mu)$ only depends on the $E([\rho,g^{-1}\cdot \rho])$-coordinates, we deduce that $F$ is essentially independent of the $E(T)\setminus(E(gT')\cup E([\rho,g^{-1}\cdot\rho]))$-coordinates, for every $g\in G$. Therefore, by \eqref{eq:empty intersection}, we have that $F\in 1\ovt L^{\infty}(Y\times \R)$. 

So we have proven that any $G$-invariant function $F\in L^{\infty}(X\times Y\times \R)$ is of the form $F(x,y,t)=H(y,t)$, for some $H\in L^{\infty}(Y\times \R)$ that satisfies 
\begin{align*}
H(y,t)=H(g\cdot y, t+\log(dg^{-1}\mu/d\mu)(x)) \text{ for a.e. } (x,y,t)\in X\times Y\times \R.
\end{align*}
Since $0$ is in the essential range of the maps $\log(dg\mu/d\mu)$, for every $g\in G$, we see that $H(g\cdot y,t)=H(y,t)$ for a.e. $(y,t)\in Y\times \R$. By ergodicity of $G\actson Y$, we conclude that $H$ is of the form $H(y,t)=P(t)$, for some $P\in L^{\infty}(\R)$ that satisfies 
\begin{align}\label{eq:invariance P}
P(t)=P(t+\log(dg^{-1}\mu/d\mu)(x))\text{ for a.e. } (x,t)\in X\times \R, \text{ for every } g\in G. 
\end{align}
Let $\Gamma\subset \R$ be the subgroup generated by the essential ranges of the maps $\log(dg\mu/d\mu)$, for $g\in G$. If $\Gamma=\{0\}$ we can identify $L^{\infty}(X\times Y\times \R)^{G}\cong L^{\infty}(\R)$. If $\Gamma\subset \R$ is dense, then it follows that $P$ is essentially constant so that the Maharam extension $G\actson X\times Y\times \R$ is ergodic, i.e. the Krieger flow of $G\actson X\times Y$ is trivial. If $\Gamma=a\Z$, with $a>0$, we conclude by \eqref{eq:invariance P} that we can identify $L^{\infty}(X\times Y\times \R)^{G}\cong L^{\infty}(\R/a\Z)$, so that the Krieger flow of $G\actson X\times Y$ is given by $\R\actson \R/a\Z$. Finally note that the closure of $\Gamma$ equals the closure of the subgroup generated by the essential range of the map
\begin{align*}
X_0\times X_0\rightarrow \R\colon \;\;\;\; (x,x')\mapsto \log(d\mu_0/d\mu_1)(x)-\log(d\mu_0/d\mu_1)(x').
\end{align*}
So we have calculated the Krieger flow in every case, concluding the proof of the theorem in the case $G$ is unimodular. 

When $G$ is not unimodular, let $G_0=\ker \Delta$ be the kernel of the modular function. Let $G\actson X\times Y\times \R$ be the modular Maharam extension and let $\alpha\colon G_0\actson X\times Y\times \R$ be its restriction to the subgroup $G_0$. Then we have that
\begin{align*}
L^{\infty}(X\times Y\times \R)^{G}\subset L^{\infty}(X\times Y \times \R)^{\alpha}.
\end{align*}
By \cite[Theorem 8.16]{AIM19} we have that $\delta(G_0)=\delta$, and we can apply the argument above to conclude that $L^{\infty}(X\times Y\times \R)^{\alpha}\subset 1\ovt 1\ovt L^{\infty}(\R)$. So for every $F\in L^{\infty}(X\times Y\times \R)^{G}$ there exists a $P\in L^{\infty}(\R)$ such that
\begin{align}\label{eq:modular invariance}
P(t)=P(t+\log(dg^{-1}\mu/d\mu)(x)+\log(\Delta(g))) \text{ for a.e. } (x,t)\in X\times \R, \text{ for every } g\in G.
\end{align}
Let $\Pi$ be the subgroup of $\R$ generated by the essential range of the maps 
\begin{align*}
x\mapsto \log(dg^{-1}\mu/d\mu)(x)+\log(\Delta(g)), \text{ with } g\in G. 
\end{align*}
As $0$ is contained in the essential range of $\log(dg^{-1}\mu/d\mu)$, for every $g\in G$, we get that $\log(\Delta(G))\subset \Pi$. Therefore, $\Pi$ also contains the subgroup $\Gamma\subset \R$ defined above. Thus the closure of $\Pi$ equals the closure of $\Sigma$, where $\Sigma\subset \R$ is the subgroup as in the statement of the theorem. From \eqref{eq:modular invariance} we conclude that we may identify $L^{\infty}(X\times Y\times \R)^{G}\cong L^{\infty}(\R)^{\Sigma}$, so that the flow of weights of $G\actson X\times Y$ is as stated in the theorem.
\end{proof}

\begin{lemma}\label{lem:trivial intersection}
Let $T$ be a locally finite tree and let $G\subset \Aut(T)$ be a closed subgroup. Suppose that $H\subset G$ is a closed compactly generated subgroup that contains a hyperbolic element and assume that $G/H$ is not compact. Let $S\subset T$ be the unique minimal $H$-invariant subtree. Then there exists a vertex $v\in S$ such that 
\begin{align}\label{eq:trivial intersection}
\bigcap_{g\in G}\left(gS\cup [v,g^{-1}\cdot v]\right)=\{v\}.
\end{align}
\end{lemma}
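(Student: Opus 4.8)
The plan is to place $v$ on the axis of a hyperbolic element of $H$ and to show that every other vertex of $S$ can be pushed off the family $\{gS\cup[v,g^{-1}\cdot v]\}_{g}$ by a single element obtained from the noncompactness of $G/H$ together with a large power of that hyperbolic element. First I would record two reductions. Taking $g=e$ gives $\bigcap_{g\in G}(gS\cup[v,g^{-1}\cdot v])\subset S$, and since $v$ is an endpoint of every segment $[v,g^{-1}\cdot v]$ the vertex $v$ always lies in the intersection. So it suffices to find $v\in S$ such that every $w\in S\setminus\{v\}$ is excluded, i.e.\ there is $g\in G$ with $w\notin gS$ and $w\notin[v,g^{-1}\cdot v]$. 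I fix a hyperbolic element $h_0\in H$ with axis $A$; as $A$ is the minimal $\langle h_0\rangle$-invariant subtree and $S$ is $H$-invariant, $A\subset S$, and I take $v\in A$, writing $\xi_+,\xi_-$ for the two ends of $A$.

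The crucial analytic input, and the step I expect to be the main obstacle, is the properness statement $\sup_{g\in G}d(g\cdot v,S)=+\infty$; this is the only place the hypothesis that $G/H$ is noncompact is used, and the coset bookkeeping must be handled with care. To prove it I would argue by contradiction: if $d(g\cdot v,S)\le R$ for all $g$, then $G\cdot v$ lies in the $R$-neighborhood $B(S,R)$ of $S$. Since $H$ acts cocompactly on its minimal subtree $S$ and $T$ is locally finite, $B(S,R)$ meets only finitely many $H$-orbits, with representatives $u_1,\dots,u_N$. Writing $g\cdot v=h_g\cdot u_{i(g)}$ with $h_g\in H$, the element $h_g^{-1}g$ lies in the compact set $C=\{k\in G:k\cdot v\in\{u_1,\dots,u_N\}\}$, a finite union of cosets of the compact group $G_v$. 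As $d(g\cdot v,S)$ is invariant under left multiplication of $g$ by $H$, this is really a bound modulo $H$ on the right: $Hg\subset HC$, whence $H\backslash G$, and therefore $G/H$, is compact, contradicting the hypothesis.

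Now fix $w\in S\setminus\{v\}$. For the first condition I would use the properness statement to choose $g_0\in G$ with $d(g_0\cdot v,S)>d(v,w)$; since $d(v,g_0^{-1}S)=d(g_0\cdot v,S)$, this forces $w\notin g_0^{-1}S$. For the second condition I rely on the elementary fact that for a subtree $S$ and $v\in S$ one has $[v,y]\cap S=[v,\operatorname{proj}_S(y)]$, so for $w\in S$ the relation $w\in[v,y]$ holds exactly when the geodesic $[v,\operatorname{proj}_S(y)]$ runs through $w$. I apply this to $y=h_0^{\,n}g_0\cdot v$: because $h_0$ preserves $S$, projection commutes with it, so $\operatorname{proj}_S(h_0^{\,n}g_0\cdot v)=h_0^{\,n}p_0$ with $p_0=\operatorname{proj}_S(g_0\cdot v)$. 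Since $h_0$ is hyperbolic, $h_0^{\,n}p_0\to\xi_+$ as $n\to+\infty$ and $h_0^{\,n}p_0\to\xi_-$ as $n\to-\infty$. Because $w\ne v$, at most one of the rays $[v,\xi_+),[v,\xi_-)$ passes through $w$, so I may choose a sign so that $w\notin[v,\xi_{\pm})$; for that sign the geodesics $[v,h_0^{\,n}p_0]$ converge to $[v,\xi_{\pm})$ and hence miss the fixed vertex $w$ for all large $|n|$.

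Putting this together, set $g:=g_0^{-1}h_0^{-n}$ for such an $n$. Then $gS=g_0^{-1}h_0^{-n}S=g_0^{-1}S$ does not contain $w$, while $g^{-1}\cdot v=h_0^{\,n}g_0\cdot v$ satisfies $\operatorname{proj}_S(g^{-1}\cdot v)=h_0^{\,n}p_0$, so $w\notin[v,g^{-1}\cdot v]$. Thus $w$ is excluded, and as $w\in S\setminus\{v\}$ was arbitrary we conclude $\bigcap_{g\in G}(gS\cup[v,g^{-1}\cdot v])=\{v\}$. I expect the routine verifications (the projection identity, commutation with $h_0$, and convergence of geodesics to an end) to be straightforward, with essentially all of the difficulty concentrated in the properness statement of the second paragraph.
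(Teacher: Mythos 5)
Your proof is correct and follows the same overall strategy as the paper's: place $v$ on the axis of a hyperbolic element of $H$, use the noncompactness of $G/H$ to find a translate $gS$ missing $w$, and then compose with large powers of the hyperbolic element to steer the segment $[v,g^{-1}\cdot v]$ away from $w$. There are two local differences worth recording. First, where the paper simply cites the argument of \cite[Theorem 9.7]{AIM19} for the existence of $g\in G$ with $g\cdot w\notin S$, you prove the needed consequence of noncompactness from scratch, in the form $\sup_{g\in G}d(g\cdot v,S)=+\infty$, via the cocompactness of $H\actson S$ and the compactness of vertex stabilizers; this argument is sound (note only that the cocompactness of a compactly generated $H$ on its minimal subtree is stated in the preliminaries for nonelementary subgroups, so for a general $H$ as in the lemma you are leaning on the same standard fact that the cited reference uses). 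Second, you exclude $w$ with a single group element by choosing the sign of $n$ according to which of the two rays $[v,\xi_\pm)$ could contain $w$, whereas the paper uses both elements $(k^{n}g)^{-1}$ and $(k^{-n}g)^{-1}$ and the observation that the two resulting segments meet only in $v$; the two devices are interchangeable. Your reduction to $w\in S$ via the term $g=e$, the projection identity $[v,y]\cap S=[v,\operatorname{proj}_S(y)]$, and the equivariance $\operatorname{proj}_S(h_0^{\,n}y)=h_0^{\,n}\operatorname{proj}_S(y)$ are all correctly deployed, so I see no gap.
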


\begin{proof}
Let $k\in H$ be a hyperbolic element and let $L\subset T$ be its axis, on which $k$ acts by a nontrivial translation. Then $L\subset S$, as one can show for instance as in the proof of \cite[Proposition 3.8]{CM11}. Pick any vertex $v\in L$. We claim that this vertex will satisfy \eqref{eq:trivial intersection}. Take any $w\in V(T)\setminus\{v\}$. As $G/H$ is not compact, one can show as in \cite[Theorem 9.7]{AIM19} that there exists a $g\in G$ such that $g\cdot w\notin S$. Since $k$ acts by translation on $L$, there exists an $n\in \mathbb{N}$ large enough such that
\begin{align*}
[v,k\cdot v]\subset [v,k^ng\cdot v]\text{ and } [v,k^{-1}\cdot v]\subset [v,k^{-n}g\cdot v],
\end{align*}   
so that in particular we have that $w\notin [v,k^ng\cdot v]\cap [v,k^{-n}g\cdot v]=\{v\}$. Since $S$ is $H$-invariant, we also have that $k^ng\cdot w\notin S$ and $k^{-n}g\cdot w\notin S$ and we conclude that
\begin{align*}
w\notin \left((k^ng)^{-1}S\cup[v,k^ng\cdot v]\right)\cap\left((k^{-n}g)^{-1}S\cup [v,k^{-n}g\cdot v]\right).
\end{align*}
\end{proof}

\begin{proof}[Proof of Proposition \ref{prop:critical value}]
Define the family $(X_e)_{e\in E}$ of independent random variables on $(X,\mu)$ by \eqref{eq:family of rvs} and write
\begin{align*}
S_v=\sum_{e\in E([\rho,v])}X_e.
\end{align*}
{\bf Claim.} There exists a $\delta>0$ such that 
\begin{align*}
\mu(\{x\in X:S_v(x)\leq -\delta \text{ for every } v\in T\setminus \{\rho\}\})>0.
\end{align*}
{\it Proof of claim.} Note that $\mathbb{E}(\exp(X_e/2))=1-H^2(\mu_0,\mu_1)$ for every $e\in E$. Define a family of random variables $(W_n)_{n\geq 0}$ on $(X,\mu)$ by
\begin{align*}
W_n=\sum_{\substack{v\in T\\d(v,\rho)=n}}\exp(S_v/2).
\end{align*}
Using that $1-H^2(\mu_0,\mu_1)=(q-1)^{-1/2}$ one computes that
\begin{align*}
\mathbb{E}(W_{n+1}|\;S_v,\; d(v,\rho)\leq n)=W_n, \text{ for every } n\geq 1.
\end{align*}
So the sequence $(W_n)_{n\geq 0}$ is a martingale and since it is positive, it converges almost surely to a finite limit when $n\rightarrow +\infty$. Write $\Sigma_n=\{v\in T:d(v,\rho)=n\}$. As $W_n\geq \max_{v\in \Sigma_n}\exp(S_v/2)$ we conclude that there exists a positive constant $C<+\infty$ such that
\begin{align*}
\mathbb{P}(S_v\leq C \text{ for every }v\in T)>0.
\end{align*}
For any vertex $w\in T$, write $T_w=\{v\in T:[\rho,w]\subset [\rho,v]\}$: the set of children of $w$, including also $w$ itself. Using the symmetry of the tree and changing the root from $\rho$ to $w\in T$, we also have that
\begin{align}\label{eq:probability1}
\mathbb{P}(S_v-S_w\leq C \text{ for every } v\in T_w)>0, \text{ for every } w\in T.
\end{align} 
Set $\nu_0=(\log d\mu_1/d\mu_0)_*\mu_0$ and $\nu_1=(\log d\mu_0/d\mu_1)_*\mu_1$. Because $1-H^2(\mu_0,\mu_1)\neq 0$ we have that $\mu_0\neq \mu_1$, so that there exists a $\delta>0$ such that
\begin{align*}
\nu_0*\nu_1((-\infty,-\delta))>0.
\end{align*}
Here $\nu_0*\nu_1$ denotes the convolution product of $\nu_0$ with $\nu_1$. Therefore there exists $N\in \mathbb{N}$ large enough such that 
\begin{align}\label{eq:probability2}
\mathbb{P}(S_w\leq -C-\delta \text{ for every } w\in \Sigma_N \text{ and } S_{w'}\leq-\delta \text{ for every } w'\in \Sigma_n \text{ with } n\leq N)>0.
\end{align}
Since for any $w\in \Sigma_N$ and $w'\in \Sigma_n$ with $n\leq N$, we have that $S_v-S_w$ is independent of $S_{w'}$ for every $v\in T_w$, and since $\Sigma_N$ is a finite set, it follows from \eqref{eq:probability1} and \eqref{eq:probability2} that
\begin{align*}
\mathbb{P}(S_v\leq -\delta \text{ for every } v\in T\setminus\{\rho\})>0. 
\end{align*}
This concludes the proof of the claim.

Let $\delta>0$ be as in the claim and define 
\begin{align*}
\mathcal{U}=\{x\in X:S_v(x)\leq -\delta \text{ for every } v\in T\setminus \{\rho\}\},
\end{align*}
so that $\mu(\mathcal{U})>0$. Let $G_\rho$ be the stabilizer subgroup of $\rho$. Note that for every $g,h\in G$ we have that $S_{hg\cdot \rho}(x)=S_{g\cdot \rho}(h^{-1}\cdot x)+S_{h\cdot \rho}(x)$ for a.e. $x\in X$, so that for $h\in G$ we have that
\begin{align*}
h\cdot \mathcal{U}\subset \{x\in X:S_{hg\cdot \rho}(x)\leq -\delta+S_{h\cdot \rho}(x) \text{ for every } g\notin G_{\rho}\}.
\end{align*}
It follows that if $h\notin G_{\rho}$, we have that
\begin{align*}
\mathcal{U}\cap h\cdot \mathcal{U}\subset \{x\in X:S_{h\cdot \rho}(x)\leq -\delta \text{ and } S_{h\cdot \rho}(x)\geq \delta\}=\emptyset.
\end{align*}
Since $G\subset \Aut(T)$ is closed, we have that $G_\rho$ is compact. So the action $G\actson (X,\mu)$ is not infinitely recurrent. Let $\lambda$ denote the left invariant Haar measure on $G$. By an adaptation of the proof of \cite[Proposition 4.3]{BV20}, the set 
\begin{align*}
D=\left\{ x\in X: \int_{G}\frac{dg\mu}{d\mu}(x)d\lambda(g)<+\infty\right\}=\left\{ x\in X: \int_{G}\exp(S_{g\cdot \rho}(x))d\lambda(g)<+\infty\right\}
\end{align*}
satisfies $\mu(D)\in \{0,1\}$. Since $G\actson (X,\mu)$ is not infinitely recurrent, it follows from \cite[Proposition A.28]{AIM19} that $\mu(D)>0$, so that we must have that $\mu(D)=1$. By \cite[Theorem A.29]{AIM19} the action $G\actson (X,\mu)$ is dissipative up to compact stabilizers. 
\end{proof}

We use a similar approach as in \cite[Section 6]{MV20} in the proof of Proposition \ref{prop:free group} below. 

\begin{proof}[Proof of Proposition \ref{prop:free group}]
It follows from Theorem \ref{thm:directed trees} and Proposition \ref{prop:critical value} that the action $G\actson (X,\mu)$, given by \eqref{eq:directed generalized Bernoulli}, is dissipative when $1-H^2(\mu_0,\mu_1)\leq (2d-1)^{-1/2}$ and weakly mixing when $1-H^2(\mu_0,\mu_1)>(2d-1)^{-1/2}$. So it remains to show that $G\actson (X,\mu)$ is nonamenable when $1-H^2(\mu_0,\mu_1)>(2d-1)^{-1/2}$ and strongly ergodic when $1-H^2(\mu_0,\mu_1)>(2d-1)^{-1/4}$.

Assume first that $1-H^2(\mu_0,\mu_1)>(2d-1)^{-1/2}$. By taking the kernel of a surjective homomorphism $\mathbb{F}_d\rightarrow \Z$ we find a normal subgroup $H_1\subset \mathbb{F}_d$ that is free on infinitely many generators. By \cite[Théorème 0.1]{RT13} we have that $\delta(H_1)=(2d-1)^{-1/2}$. Then using \cite[Corollary 6]{Su79}, we can find a finitely generated free subgroup $H_2\subset H_1$ such that $H_1=H_2*H_3$ for some free subgroup $H_3\subset H_1$ and such that $1-H^2(\mu_0,\mu_1)>\exp(-\delta(H_2)/2)$. Let $\psi\colon H_1\rightarrow H_3$ be the surjective group homomorphism uniquely determined by 
\begin{align*}
\psi(h)=\begin{cases}e &\text{ if } h\in H_2\\
h&\text{ if } h\in H_3\end{cases}.
\end{align*}
We set $N=\ker\psi$, so that $H_2\subset N$ and we get that $1-H^2(\mu_0,\mu_1)>\exp(-\delta(N)/2)$. Therefore $N\actson (X,\mu)$ is ergodic by Theorem \ref{thm:directed trees}. Also we have that $H_1/N\cong H_3$, which is a free group on infinitely many generators. Therefore $H_1\actson (X,\mu)$ is nonamenable by \cite[Lemma 6.4]{MV20}. A posteriori also $\mathbb{F}_d\actson(X,\mu)$ is nonamenable. 

Let $\pi$ be the Koopman representation of the action $\mathbb{F}_d\actson (X,\mu)$:
\begin{align*}
\pi\colon G\actson L^2(X,\mu):\;\;\;\; (\pi_g(\xi))(x)=\left(\frac{dg\mu}{d\mu}(x)\right)^{1/2}\xi(g^{-1}\cdot x).
\end{align*}
{\bf Claim.} If $1-H^2(\mu_0,\mu_1)>(2d-1)^{-1/4}$, then $\pi$ is not weakly contained in the left regular representation.
\vspace{.5cm}

{\it Proof of claim.} Let $\eta$ denote the canonical symmetric measure on the generator set of $\mathbb{F}_d$ and define
\begin{align*}
P=\sum_{g\in \mathbb{F}_d}\eta(g)\pi_g.
\end{align*}
The $\eta$-spectral radius of $\alpha\colon \mathbb{F}_d\actson (X,\mu)$, which we denote by $\rho_\eta(\alpha)$, is by definition the norm of $P$, as a bounded operator on $L^2(X,\mu)$. By \cite[Proposition A.11]{AIM19} we have that 
\begin{align*}
\rho_\eta(\alpha)&=\lim_{n\rightarrow \infty}\langle P^n(1), 1\rangle^{1/n}\\
&=\lim_{n\rightarrow \infty}\left(\sum_{g\in \mathbb{F}_d}\eta^{*n}(g)(1-H^2(\mu_0,\mu_1))^{2|g|}\right)^{1/n},
\end{align*}
where $|g|$ denotes the word length of a group element $g\in \mathbb{F}_d$. By \cite[Theorem 6.10]{AIM19} we then have that
\begin{align*}
\rho_\eta(\alpha)=\begin{cases}\frac{(1-H^2(\mu_0,\mu_1))^2}{2d}\left((2d-1)+(1-H^2(\mu_0,\mu_1))^{-4}\right)& \text{ if }1-H^2(\mu_0,\mu_1)>(2d-1)^{-1/4}\\
\frac{\sqrt{2d-1}}{d}&\text{ if } 1-H^2(\mu_0,\mu_1)\leq(2d-1)^{-1/4}\end{cases}.
\end{align*}
Therefore, if $1-H^2(\mu_0,\mu_1)>(2d-1)^{-1/4}$, we have that $\rho_\eta(\alpha)>\rho_\eta(\mathbb{F}_d)$, where $\rho_\eta(\mathbb{F}_d)$ denotes the $\eta$-spectral radius of the left regular representation. This implies that $\alpha$ is not weakly contained in the left regular representation (see for instance \cite[Section 3.2]{AD03}).

Now assume that $1-H^2(\mu_0,\mu_1)>(2d-1)^{-1/4}$. As in the proof of Theorem \ref{thm:directed trees} there exist probability measures $\nu, \eta_0$ and $\eta_1$ on $X_0$ that are equivalent with $\mu_0$ and a number $s\in (0,1)$ such that 
\begin{align*}
\mu_j=(1-s)\nu+s\eta_j, \text{ for } j=0,1,
\end{align*}
and such that $1-H^2(\eta_0,\eta_1)>(2d-1)^{-1/4}$. Consider the nonsingular action
\begin{align*}
\mathbb{F}_d\actson (X,\eta)=\prod_{e\in E(T)}(X_0,\eta_e),\text{ where } \eta_e=\begin{cases}\eta_0&\text{ if } e \text{ is oriented towards } \rho\\
\eta_1 &\text{ if } e \text{ is oriented away from } \rho\end{cases}.
\end{align*}
By Theorem \ref{thm:directed trees} the action $\mathbb{F}_d\actson (X,\eta)$ is ergodic. Write $\rho$ for the Koopman representation associated to $\mathbb{F}_d\actson (X,\eta)$. By the claim, $\rho$ is not weakly contained in the left regular representation. Let $\lambda$ be the probability measure on $\{0,1\}$ given by $\lambda(0)=s$. Let $\rho^0$ be the reduced Koopman representation of the pmp generalized Bernoulli action $\mathbb{F}_d\actson (X\times \{0,1\}^{E(T)},\nu^{E(T)}\times\lambda^{E(T)})$. Then $\rho^0$ is contained in a multiple of the left regular representation. Therefore, as $\rho$ is not weakly contained in the left regular representation, $\rho$ is not weakly contained in $\rho\otimes \rho^{0}$. 

Define the map
\begin{align*}
\Psi\colon X\times X\times \{0,1\}^{E(T)}\rightarrow X\colon \;\;\;\; \Psi(x,y,z)_e=\begin{cases}x_e &\text{ if } z_e=0\\
y_e &\text{ if } z_e=1\end{cases}.
\end{align*}
Then $\Psi$ is $\mathbb{F}_d$-equivariant and we have that $\Psi_*(\eta\times \nu^{E(T)}\times \lambda^{E(T)})=\mu$. Suppose that $\mathbb{F}_d\actson (X,\mu)$ is not strongly ergodic. Then there exists a bounded almost invariant sequence $f_n\in L^{\infty}(X,\mu)$ such that $\|f_n\|_2=1$ and $\mu(f_n)=0$ for every $n\in \mathbb{N}$. Therefore $\Psi_*(f_n)$ is a bounded almost invariant sequence for the diagonal action $\mathbb{F}_d\actson (X\times X\times \{0,1\}^{E(T)},\eta\times \nu^{E(T)}\times \lambda^{E(T)})$. Let $E\colon L^{\infty}(X\times X\times \{0,1\}^{E(T)})\rightarrow L^{\infty}(X)$ be the conditional expectation that is uniquely determined by $\mu\circ E=\eta\times \nu^{E(T)}\times \lambda^{E(T)}$. By \cite[Lemma 5.2]{MV20} we have that $\lim_{n\rightarrow \infty}\|(E\circ\Psi_*)(f_n)-\Psi_{*}(f_n)\|_2=0$ and in particular we get that
\begin{align}\label{eq:limit 1}
\lim_{n\rightarrow \infty}\|(E\circ \Psi_*)(f_n)\|_2=1.
\end{align} 
But just as in the proof of Theorem \ref{thm:strong ergodicity general} we have that 
\begin{align*}
\left\|(E\circ\Psi_*)\big|_{L^2(X,\mu)\ominus \mathbb{C}1}\right\|<1,
\end{align*}
which is in contradiction with \eqref{eq:limit 1}. We conclude that $\mathbb{F}_d\actson (X,\mu)$ is strongly ergodic. 
\end{proof}

Proposition \ref{prop:directed not closed} below complements Theorem \ref{thm:directed trees} by considering groups $G\subset \Aut(T)$ that are not closed. This is similar to \cite[Theorem 10.5]{AIM19}.

\begin{proposition}\label{prop:directed not closed}
Let $T$ be a locally finite tree with root $\rho\in T$. Let $G\subset \Aut(T)$ be a lcsc group such that the inclusion map $G\rightarrow \Aut(T)$ is continuous and such that $G\subset \Aut(T)$ is not closed. Write $\delta=\delta(G\actson T)$ for the Poincaré exponent given by \eqref{eq:Poincare exponent intro}. Let $\mu_0$ and $\mu_1$ be nontrivial equivalent probability measures on a standard Borel space $X_0$. Consider the generalized nonsingular Bernoulli action $\alpha\colon G\actson (X,\mu)$ given by \eqref{eq:directed generalized Bernoulli}. Let $H\subset \Aut(T)$ be the closure of $G$. Then the following holds. 
\begin{itemize}
\item If $1-H^2(\mu_0,\mu_1)>\exp(-\delta/2)$, then $\alpha$ is ergodic and its Krieger flow is determined by the essential range of the map
\begin{align}\label{eq:essential range}
X_0\times X_0\rightarrow \R:\;\;\;\; (x,x')\mapsto\log(d\mu_0/d\mu_1)(x)-\log(d\mu_0/d\mu_1)(x')
\end{align}
as in Theorem \ref{thm:directed trees}.
\item If $1-H^2(\mu_0,\mu_1)<\exp(-\delta/2)$, then each ergodic component of $\alpha$ is of the form $G\actson H/K$, where $K$ is a compact subgroup of $H$. In particular there exists a $G$-invariant $\sigma$-finite measure on $X$ that is equivalent with $\mu$.
\end{itemize}
\end{proposition}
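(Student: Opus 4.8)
The plan is to deduce every statement about the dense subgroup $G$ from the corresponding statement for its closure $H=\overline{G}\subset\Aut(T)$, which is a closed subgroup to which Theorem~\ref{thm:directed trees} (and, at the threshold, Proposition~\ref{prop:critical value}) already applies. First I would record two density preliminaries. Since every vertex stabilizer $H_v$ is compact \emph{and open} and $G$ is dense in $H$, the coset $hH_v$ meets $G$ for every $h\in H$, so $g\cdot v=h\cdot v$ for some $g\in G$; hence $G\cdot v=H\cdot v$ for all $v\in V(T)$, and in particular $\delta(H\actson T)=\delta(G\actson T)=\delta$. Moreover $H$ is closed and, containing the nonelementary group $G$, is itself nonelementary, and the family $(\mu_e)_{e\in E}$ depends only on the $\Aut(T)$-action, so $H\actson(X,\mu)$ is precisely the nonsingular Bernoulli action \eqref{eq:directed generalized Bernoulli} for $H$, governed by Theorem~\ref{thm:directed trees} with the \emph{same} threshold $\exp(-\delta/2)$.

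The engine of the comparison is a single continuity/density argument. Because $H\actson(X,\mu)$ is a continuous nonsingular action of the lcsc group $H$, for each $f\in L^\infty(X)\subset L^2(X,\mu)$ the map $h\mapsto f\circ h^{-1}$ is continuous into $L^2(X,\mu)$. Thus, if $f$ is $G$-invariant, this map is constantly equal to $f$ on the dense set $G$, and by continuity $f\circ h^{-1}=f$ for all $h\in H$; that is, $L^\infty(X)^G=L^\infty(X)^H$. The identical argument applied to the Maharam extensions---both continuous and built from the common cocycle $\log(dg\mu/d\mu)=S_{g\cdot\rho}$, which varies continuously with $g$---gives $L^\infty(X\times\R)^G=L^\infty(X\times\R)^H$ once $\mu\times e^{-t}\,dt$ is replaced by an equivalent probability measure.

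With these tools the two cases are short. In the regime $1-H^2(\mu_0,\mu_1)>\exp(-\delta/2)$, Theorem~\ref{thm:directed trees} gives that $H\actson(X,\mu)$ is weakly mixing, so $L^\infty(X)^H=\C$ and its Krieger flow $\R\actson L^\infty(X\times\R)^H$ is the one determined by the essential range of \eqref{eq:essential range}; the two equalities of invariant algebras then transfer both conclusions to $G$, so $\alpha$ is ergodic with the stated Krieger flow. In the regime $1-H^2(\mu_0,\mu_1)<\exp(-\delta/2)$, Theorem~\ref{thm:directed trees} gives that $H\actson(X,\mu)$ is dissipative up to compact stabilizers, so, up to a null set, $X$ decomposes into $H$-ergodic components each $H$-isomorphic to $H/K$ for a compact $K\subset H$, and this decomposition carries an $H$-invariant $\sigma$-finite measure $m\sim\mu$. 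Since $G\subset H$ preserves each component, I only need each $G\actson H/K$ to be ergodic: but a $G$-invariant $f\in L^\infty(H/K)$ is, by the same continuity/density argument, left $H$-invariant, hence constant as $H$ acts transitively on $H/K$. Therefore the $H$-ergodic components are exactly the $G$-ergodic components, each of the form $G\actson H/K$, and $m$ is a $G$-invariant $\sigma$-finite measure equivalent with $\mu$.

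The hard part will be the continuity/density step that upgrades $G$-invariance to $H$-invariance. One must check that for bounded $f$ the assignment $h\mapsto f\circ h^{-1}$ is genuinely $L^2$-continuous on $H$ (using that convergence in $\Aut(X,\mu)$ yields convergence in measure and hence, by boundedness and finiteness of $\mu$, in $L^2$), and that the analogous continuity persists on the Maharam extension after passing to a finite equivalent measure. Once this is secured, every remaining ingredient---the equality $G\cdot v=H\cdot v$, the equality of Poincaré exponents, and the identification of ergodic components---follows from this comparison together with the already-established results for the closed group $H$.
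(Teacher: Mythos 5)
Your proof follows the same route as the paper's: pass to the closure $H$, note $\delta(H)=\delta$, apply Theorem~\ref{thm:directed trees} to $H\actson(X,\mu)$, and transfer ergodicity, the Krieger flow, and the ergodic decomposition back to $G$ via density of $G$ in $H$ (i.e.\ the equality of the $G$- and $H$-invariant subalgebras of $L^\infty(X)$ and of the Maharam extension). You in fact supply several details the paper leaves implicit --- the orbit equality $G\cdot v=H\cdot v$ yielding $\delta(H)=\delta$, the $L^2$-continuity argument upgrading $G$-invariance to $H$-invariance, and the ergodicity of each component $G\actson H/K$ --- so the argument is correct and complete.
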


\begin{proof}
Let $H\subset \Aut(T)$ be the closure of $G$. Then $\delta(H)=\delta$ and we can apply Theorem \ref{thm:directed trees} to the nonsingular action $H\actson (X,\mu)$. 

If $1-H^2(\mu_0,\mu_1)>\exp(-\delta/2)$, then $H\actson X$ is ergodic. As $G\subset H$ is dense, we have that
\begin{align*}
L^{\infty}(X)^{G}=L^{\infty}(X)^{H}=\mathbb{C}1,
\end{align*}
so that $G\actson X$ is ergodic. Let $H\actson X\times \R$ be the Maharam extension associated to $H\actson X$. Again, as $G\subset H$ is dense, we have that
\begin{align*}
L^{\infty}(X\times \R)^{G}=L^{\infty}(X\times \R)^{H}.
\end{align*}
Note that the subgroup generated by the essential ranges of the maps $\log(dg^{-1}\mu/d\mu)$, with $g\in G$, is the same as the subgroup generated by the essential ranges of the maps $\log(dh^{-1}\mu/d\mu)$, with $h\in H$. Then one determines the Krieger flow of $G\actson X$ as in the proof of Theorem \ref{thm:directed trees}. 

If $1-H^2(\mu_0,\mu_1)<\exp(-\delta/2)$, the action $H\actson (X,\mu)$ is dissipative up to compact stabilizers. By \cite[Theorem A.29]{AIM19} each ergodic component is of the form $H\actson H/K$ for a compact subgroup $K\subset H$. Therefore each ergodic component of $G\actson (X,\mu)$ is of the form $G\actson H/K$, for some compact subgroup $K\subset H$. 
\end{proof}

\end{document}